\numberwithin{equation}{section}
\newtheorem{thm}{Theorem}[section]
\newtheorem{lem}[thm]{Lemma}
\newtheorem*{TA}{Theorem A}
\newtheorem*{TB}{Theorem B}
\newtheorem*{T1}{Theorem 1.1}
\newtheorem*{T2}{Theorem 1.2}
\newtheorem*{T3}{Theorem 1.3}
\newcommand{\al}{\alpha}
\newcommand{\btt}{\theta}
\newcommand{\om}{\Omega}
\newcommand{\omd}{\omega_{D}}
\newcommand{\A}{\mathcal{A}}
\newcommand{\ph}{\varphi}
\newcommand{\Om}{{\Omega}}
\newcommand{\FF}{\mathcal{F}}
\newcommand{\PP}{\mathcal{P}}
\newcommand{\Ss}{\mathcal{S}}
\newcommand{\wtl}{\widetilde}
\newcommand{\wht}{\widehat}
\newcommand{\KK}{ \mathcal{K}_{\textsf{ff}}(\mathcal{F})}
\newcommand{\KKd}{ \mathcal{K}_{\textsf{ff}}(\mathcal{F}, d)}
\DeclareMathOperator{\core}{\textnormal{core}}
\DeclareMathOperator{\SLI}{\textnormal{SLI}}
\DeclareMathOperator{\inq}{\textnormal{{inq}}}
\DeclareMathOperator{\sol}{\textnormal{{sol}}}
\newcommand{\brr}{\bar {\mathrm{r}}}
\newcommand{\rr}{\mathrm{r}}
\begin{document}
\title
[Linear programming and the intersection of free subgroups]
{Linear programming and the intersection of free subgroups in free products of groups}
\author{S. V. Ivanov }
 \address{  Department of Mathematics\\
 University of Illinois \\
 Urbana\\  IL 61801\\ U.S.A. } \email{ivanov@illinois.edu}
\thanks{Supported in part by  the NSF under  grant  DMS 09-01782.}
\keywords{Free products of groups, free and factor-free subgroups, rank of intersection of factor-free subgroups, linear programming.}
\subjclass[2010]{Primary 20E06, 20E07, 20F65; Secondary  68Q25,  90C90.}

\begin{abstract}
We study the intersection of finitely generated factor-free subgroups of free products of  groups by utilizing the method of linear programming.  For example, we prove that if $H_1$ is a finitely generated
factor-free noncyclic subgroup of the free product $G_1 * G_2$   of two finite groups $G_1$, $G_2$, then the WN-coefficient $\sigma(H_1)$ of $H_1$ is rational and can be computed in exponential time {}in the size of $H_1$. This coefficient $\sigma(H_1)$ is the minimal positive real number such that, for every  finitely generated
factor-free subgroup $H_2$ of  $G_1 * G_2$, it is true that
$\bar \rr(H_1, H_2)  \le  \sigma(H_1) \bar \rr(H_1) \bar \rr(H_2)$, where $\bar{ {\rm r}} (H) = \max ( {\rm r} (H)-1,0)$ is the reduced rank of $H$, $\rr (H)$ is the rank of $H$, and  $\bar \rr(H_1, H_2)$ is the reduced rank of the generalized intersection of $H_1$ and $H_2$. In the case of the free product $G_1 * G_2$   of two finite groups $G_1$, $G_2$, it is also proved that there exists a  factor-free subgroup $H_2^* = H_2^*(H_1)$ such that
 $\bar \rr(H_1, H_2^*)  =  \sigma(H_1) \bar \rr(H_1) \bar \rr(H_2^*)$, $H_2^*$ has at most doubly exponential size {}in the size of $H_1$, and $H_2^*$ can be constructed in exponential time {}in the size of $H_1$.
\end{abstract}
\maketitle
\tableofcontents

\section{Introduction}

Let $G_\al$, $\al \in I$, be some nontrivial groups and let
 $\FF = \prod_{\alpha \in I}^* G_\alpha$ denote the free product of these groups. According to the classic Kurosh subgroup
theorem  \cite{K}, \cite{LS}, every subgroup $H$ of $\FF$ is a free product $ F(H) * \prod^*  t_{{\al,\gamma}} H_{\al,\gamma} t_{{\al,\gamma}}^{-1}$, where $H_{{\al,\gamma}}$ is a subgroup of $G_\al$, $t_{{\al,\gamma}} \in
\FF$, and $F(H)$ is a free subgroup of
$\FF$  such that, for every $s \in \FF$
and $\gamma \in I$,  it is true that $F(H) \cap
s G_\gamma s^{-1} =\{ 1 \}$. We say that $H$
is a {\em factor-free} subgroup of  $\FF$  if $H=F(H)$ in the above form of $H$, i.e., for every $s \in
\FF$ and $\gamma \in I$, we have $H \cap s
G_\gamma s^{-1} =\{ 1 \}$. Let ${\rm r} (F)$ denote the rank of a (finitely generated)
 free group $F$.   Since a factor-free subgroup $H$ of
$\FF$  is free,  the  reduced
rank $\bar{ {\rm r}} (H) := \max ( {\rm r} (H)-1,0)$ of $H$, where ${\rm r} (H)$ is the rank of $H$,  is well defined.
\smallskip

Let $q^*= q^*(G_\al, \al \in I)$ denote the minimum of orders $>2$ of finite subgroups of groups $G_\al$, $\al
\in I$, and  let $q^* := \infty$ if there are no such subgroups.  It is clear that either $q^*$ is an odd prime or $q^* \in \{ 4,  \infty \}$.
If  $q^* = \infty$, define
$ \frac{q^*}{q^*-2} := 1$.
Dicks and the author \cite{DIv} proved that if
$H_1$ and $H_2$ are finitely generated factor-free subgroups of  $\FF$,  then
\begin{equation}\label{di}
\bar \rr(H_1\cap H_2)  \le  2\tfrac{q^*}{q^*-2}
 \bar \rr(H_1) \bar \rr(H_2)  .
\end{equation}
Dicks and the author \cite{DIv} conjectured that if groups $G_\al$, $\al \in I$,  contain no involutions, then the coefficient 2 could be left out  and
 \begin{equation}\label{conj}
 \bar \rr(H_1\cap H_2)  \le  \tfrac{q^*}{q^*-2}
 \bar \rr(H_1) \bar \rr(H_2)  .
 \end{equation}
This conjecture  can be regarded as a far reaching generalization of the Hanna Neumann conjecture  \cite{N1} on rank of the intersection of subgroups in free groups.  Recall that the  Hanna Neumann conjecture  \cite{N1}   claims that  if  $H_1$, $H_2$
are finitely generated subgroups of a free group, then $\bar { \rm{r} } (H_1 \cap H_2) \le \bar{ {\rm r}} (H_1) \bar { \rm r}  (H_2)$.
For more discussion, partial results and proofs of this conjecture the reader is referred to  \cite{D}, \cite{D2}, \cite{Fr}, \cite{Iv12}, \cite{Min},  \cite{N2}, \cite{St}, \cite{T}.
\smallskip

The conjecture \eqref{conj} is established by Dicks and the author \cite{DIv2} in the case when $\FF$  is the free product of two groups of order 3 in which case $q^* = 3$ and \eqref{conj} turns into
$$
\bar \rr(H_1\cap H_2)  \le  \tfrac{q^*}{q^*-2}  \bar \rr(H_1) \bar \rr(H_2) = 3  \bar \rr(H_1) \bar \rr(H_2) .
$$
Another special case in which the conjecture \eqref{conj} is known to be true is the case when $\FF$ is the free product of infinite cyclic groups, i.e., $\FF$ is a free group, as follows from Friedman's \cite{Fr}  and Mineyev's \cite{Min} proofs  of the Hanna Neumann conjecture, see also Dicks's proof \cite{D2}.  In this case  $q^* =\infty$ and the inequality \eqref{conj} turns into
\begin{equation}\label{HNC}
\bar \rr(H_1\cap H_2)  \le   \bar \rr(H_1) \bar \rr(H_2) .
 \end{equation}
More generally, the inequality \eqref{HNC}  also holds in the case when $\FF$ is the free product  of right  orderable groups as follows from results of  Antol\'in, Martino, and Schwabrow \cite{ABC}, see also \cite{Iv12}.
We mention that it follows from results of \cite{DIv}  that the conjectured inequality  \eqref{conj}  is sharp and may not be improved.
\smallskip

In an attempt to improve on the bound  \eqref{di} in a special  case, Dicks and the author  \cite{DIv2}  showed  that
   \begin{equation}\label{imrn}
  \bar \rr(H_1\cap H_2)  \le   \left(2 - \tfrac{(4+2\sqrt{3})p}{(2p-3+\sqrt{3})^2} \right) \cdot \tfrac{p}{p-2}  \bar \rr(H_1) \bar \rr(H_2)
 \end{equation}
for finitely generated factor-free subgroups $H_1$, $H_2$ of the free product $C_p * C_p$ of two cyclic  groups of prime order $p >2$.
\smallskip

Note that for $p=3$ the inequality  \eqref{imrn}  yields the conjectured inequality  \eqref{conj}.  However,  for prime $p \ge 5$, the problem whether the inequality \eqref{conj} holds for  the free product of two cyclic groups of order $p$ remains open and seems to be the most basic and appealing case of the conjecture \eqref{conj}  for groups with torsion. In this connection, we remark that the ideas of articles \cite{ABC},  \cite{D2},  \cite{Fr},  \cite{Min} do not look to be applicable to the case of free products with torsion and shed no light on the conjecture \eqref{conj} for free products of groups with torsion, especially, for free products of finite groups.
\medskip

In this article, however, we will not attempt to prove or improve on any upper bounds. Instead, we will look at generalized intersections of finitely generated factor-free subgroups in
free products of groups from a disparate standpoint and prove results of quite different flavor by utilizing the method of linear programming.
\smallskip

First we recall a stronger version of the conjecture \eqref{conj} that generalizes the  strengthened Hanna Neumann
conjecture which was  put forward by
Walter Neumann  \cite{N2}  for subgroups of free groups.  Let  $H_1$ and $H_2$ be finitely generated factor-free subgroups of an arbitrary free product $\FF = \prod_{\alpha \in I}^* G_\alpha$ of groups $G_\alpha$, $\alpha \in I$, let  the number $\frac{q^*}{q^*-2}$ be defined for $\FF$ as above,   and let $S(H_1, H_2)$ denote a set of  representatives of those double cosets $H_1 t H_2$ of $\FF$, $t \in \FF$,   that have the property $H_1 \cap t H_2 t^{-1} \ne  \{ 1 \}$.  Then the  strengthened
version of  the conjecture \eqref{conj} claims that
\begin{equation}\label{conjs}
\bar \rr(H_1, H_2) :=   \sum_{s \in S(H_1, H_2)} \bar \rr(H_1\cap s H_2 s^{-1})
\le \tfrac{q^*}{q^*-2}   \bar \rr(H_1) \bar \rr(H_2)  ,
\end{equation}
where $\bar \rr(H_1, H_2) $ is the reduced rank of the generalized intersection of $H_1$ and $H_2$ consisting of subgroups $H_1\cap s H_2 s^{-1} $, $s \in S(H_1, H_2)$.
\smallskip

Let $\KK$  denote the set of all finitely generated noncyclic factor-free subgroups of the free product  $\FF$.
Pick a subgroup $H_1 \in \KK$.  We will say that a real number $\sigma(H_1) >0 $ is the {\em Walter Neumann coefficient} for   $H_1$, or,  briefly, the WN-coefficient for $H_1$,   if, for every  $H_2 \in \KK$, we have
\begin{equation}\label{dfy0}
\brr (H_1 , H_2) \le \sigma(H_1)  \brr(H_1) \brr(H_2)
\end{equation}
and  $\sigma(H_1)$ is minimal  with this property. Clearly,
$$
\sigma(H_1)  = \sup_{H_2} \bigg\{ \frac{\brr (H_1,  H_2)}{\brr (H_1) \brr (H_2) }  \bigg\}
$$
over all  subgroups $H_2 \in \KK$.
\smallskip

For every integer $d \ge 3$, we also define the number
\begin{equation}\label{dfy0d}
\sigma_d(H_1)  := \sup_{H_2} \bigg\{ \frac{\brr (H_1 ,  H_2)}{\brr (H_1) \brr (H_2) } \bigg\}
\end{equation}
over all  subgroups $H_2 \in \KKd$, where $\KKd$ is a subset of $ \KK $ consisting of those subgroups
whose  irreducible core graphs have all of its vertices of degree at most $d$, see Section~2 for definitions.
This number  $\sigma_d(H_1)$ is called the  WN${}_d$-{\em coefficient} for    $H_1$.
 Since  $\KKd \subseteq \mathcal K_{\textsf{ff}}(\FF, d+1)$, it follows from the definitions that  $\sigma_d(H_1) \le \sigma_{d+1}(H_1) \le \sigma(H_1)$ and $\sup_{d}\{ \sigma_d(H_1) \} = \sigma(H_1)$.
\smallskip

For example, it follows from results of   \cite{DIv}, \cite{DIv2} mentioned above  that if  $\FF = C_p * C_p$ is the  free product of two cyclic groups of  prime order $p >2$ and  $H_1 \in \KK$, then
$$
\tfrac{p}{p-2} \le   \sigma_d(H_1)\le \sigma(H_1)  \le   \left(2 - \tfrac{(4+2\sqrt{3})p}{(2p-3+\sqrt{3})^2} \right) \cdot \tfrac{p}{p-2} .
$$

The main technical result of this article is the following.

\begin{thm}\label{th1} Suppose that $\FF =G_1 * G_2$ is the free product of two nontrivial groups $G_1,  G_2$
and  $H_1$ is a  finitely generated factor-free noncyclic subgroup of $\FF$. Then the following  are true.

\begin{enumerate}

\item[(a)]  For every integer $d \ge 3$,  there exists a linear programming problem (LP-problem)
\begin{equation}\label{lpa}
\PP(H_1, d) = \max\{ c(d)x(d) \mid A(d)x(d) \le b(d)  \}
\end{equation}
with integer coefficients whose solution is equal to $-\sigma_d(H_1) \brr (H_1)$.
\smallskip

\item[(b)]  There is a finitely generated factor-free subgroup $H_2^* $ of $\FF$,
$H_2^*= H_2^*(H_1)$,   such that  $H_2^* $   corresponds to  a vertex solution of the dual problem
$$
\PP^*(H_1, d) = \min \{ b(d)^{\top}  y(d)  \mid A(d)^{\top}y(d) = c(d)^{\top} , \, y(d) \ge 0  \}
$$
of the primal LP-problem  \eqref{lpa} of part (a) and
$$
\bar \rr(H_1, H_2^*)  =  \sigma_d(H_1)  \bar \rr(H_1) \bar \rr( H_2^*) .
$$
In particular,  the WN${}_d$-coefficient $\sigma_d(H_1)$ of $H_1$ is rational.

Furthermore, if $\Psi(H_1)$ and  $\Psi(H_2^*)$ denote irreducible core graphs representing subgroups $H_1$ and $H_2^*$, resp.,
and $| E \Psi |$ is the number of oriented edges in the graph $\Psi$, then
$$
| E  \Psi(H_2^*) | < 2^{  2^{| E  \Psi(H_1) |/4 + \log_2 \log_2 (4d)  } } .
$$

\item[(c)]  There exists a linear semi-infinite programming problem (LSIP-problem)
$\PP(H_1) = \sup \{ cx \mid Ax \le b  \}$ with finitely many variables in $x$ and with countably  many constraints in the system $Ax \le b$ whose dual problem
$$
\PP^*(H_1)  = \inf \{ b^{\top} y \mid A^{\top} y = c^{\top} , \, y \ge 0  \}
$$
has a solution equal to  $-\sigma(H_1) \brr (H_1)$.
\smallskip

\item[(d)]  Let the word problem for both groups $G_1, G_2$ be solvable
and let an  irreducible core graph $\Psi(H_1)$  of $H_1$ be given. Then the
LP-problem \eqref{lpa}  of part (a)   can be algorithmically written down and the
WN${}_d$-coefficient $\sigma_d(H_1)$ for $H_1$  can be computed.
In addition,  an irreducible core graph $\Psi(H_2^*)$ of the subgroup $H_2^*$ of part (b) can be algorithmically constructed.
\smallskip

\item[(e)]  Let both groups $G_1$ and $G_2$ be finite,  let $d_{m} := \max( |G_1|, |G_2|) \ge 3$, and
 let an irreducible  core graph $\Psi(H_1)$  of $H_1$ be given.
 Then the LP-problem \eqref{lpa} of part (a)  for $d = d_{m}$ coincides with the
LSIP-problem $\PP(H_1)$ of part (c) and the WN-coefficient $\sigma(H_1)$ for $H_1$ is
rational and computable.
\end{enumerate}
 \end{thm}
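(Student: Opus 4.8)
The plan is to translate the computation of the coefficients $\sigma_d(H_1)$ and $\sigma(H_1)$ into the solution of a (finite or semi-infinite) linear program built from the combinatorial data of the Stallings-type core graphs of Section~2, and then to recover an extremal subgroup $H_2^*$ by reading off a vertex of the feasible region of the dual program.

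First I would recall the representation of a finitely generated factor-free subgroup $H\le\FF=G_1*G_2$ by its irreducible core graph $\Psi(H)$, the fact that $\bar\rr(H)$ equals an integral linear combination of the vertex- and edge-counts of $\Psi(H)$ (essentially the negative Euler characteristic, the vertex groups being trivial by factor-freeness), and the Neumann-type formula that computes $\bar\rr(H_1,H_2)$ from the connected components of the product graph $\Psi(H_1)\times\Psi(H_2)$. The decisive observation is that, with $\Psi(H_1)$ held fixed and the degrees of $\Psi(H_2)$ bounded by $d$, the product graph — hence $\bar\rr(H_1,H_2)$ — depends on $\Psi(H_2)$ only through finitely many nonnegative integers: for each vertex $v$ of $\Psi(H_1)$ and each admissible ``local type'' $\tau$ of a vertex of degree $\le d$ of a core graph mapping onto $v$, let $x_{v,\tau}$ be the number of vertices of $\Psi(H_2)$ of type $\tau$ lying over $v$ (the relevant edge-counts being determined from these). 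I would then check that, after the standard reduction that lets one assume the product graph has no superfluous tree components, the three quantities $\bar\rr(H_2)$, $\bar\rr(H_1,H_2)$, and the system of conditions under which a vector $(x_{v,\tau})$ is realized by an honest bounded-degree factor-free core graph — the degree constraints coming from $G_1,G_2$, the balance conditions along oriented edges, the factor-freeness and connectedness conditions restated combinatorially — are all affine-linear in the variables $x_{v,\tau}$ with integer coefficients. The point of this particular choice of variables is precisely that it absorbs all cross terms between the two subgroups into the fixed data of $\Psi(H_1)$, leaving $\bar\rr(H_1,H_2)$ genuinely linear; carrying out this bookkeeping is the bulk of the work.

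Given these linear descriptions, parts~(a), (b), (d) follow the standard pattern of linear-fractional optimization. Every $H_2\in\KKd$ produces a feasible vector, and conversely scaling a rational feasible vector by its common denominator gives a feasible integer vector that is realizable; hence $\bar\rr(H_1,H_2)/(\bar\rr(H_1)\bar\rr(H_2))$ attains its supremum $\sigma_d(H_1)$ at a rational extreme point. Normalizing $\bar\rr(H_2)=1$ turns this into an ordinary LP, and passing to its LP-dual (with the appropriate sign) yields the integer program $\PP(H_1,d)$ with optimum $-\sigma_d(H_1)\bar\rr(H_1)$, proving~(a). A vertex optimal solution of the dual $\PP^*(H_1,d)$ is a rational count vector; clearing denominators and then \emph{realizing} it — assembling the prescribed numbers of local stars in an Eulerian fashion so that the incidence and balance conditions hold, then folding and trimming to an irreducible core graph without disturbing the combination of counts that defines $\sigma_d$ — produces $\Psi(H_2^*)$ with $\bar\rr(H_1,H_2^*)=\sigma_d(H_1)\bar\rr(H_1)\bar\rr(H_2^*)$, so $\sigma_d(H_1)\in\mathbb Q$; this is~(b). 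The doubly-exponential size bound then comes from the classical estimate on the bit-size of a vertex of a rational polyhedron (Cramer's rule together with Hadamard's inequality): $\PP(H_1,d)$ has exponentially many (in $|E\Psi(H_1)|$) variables and constraints but coefficients of size $O(d)$, so the least common denominator of a vertex solution, and hence $|E\Psi(H_2^*)|$, is at most doubly exponential in $|E\Psi(H_1)|$; tracking the constants yields the displayed inequality. Part~(d) is then immediate: solvability of the word problems in $G_1,G_2$ lets one enumerate the vertices of $\Psi(H_1)$ and their admissible local types, hence write down $A(d),b(d),c(d)$ explicitly, solve the rational LP in finitely many steps, and perform the (constructive) realization step.

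For~(c) and~(e) one lets $d\to\infty$. The programs $\PP(H_1,d)$ form an increasing chain, and their union is a linear semi-infinite program $\PP(H_1)$ with the same finite tuple of variables but countably many constraints (one for each local type, of which there are now unboundedly many); since $\sup_d\sigma_d(H_1)=\sigma(H_1)$, the optimum of its dual $\PP^*(H_1)$ equals $-\sigma(H_1)\bar\rr(H_1)$ — one passes to the dual rather than to $\PP(H_1)$ itself because a semi-infinite program may exhibit a duality gap, which gives~(c). If finally $G_1$ and $G_2$ are both finite, every vertex of every core graph of a subgroup of $\FF$ has degree at most $d_m=\max(|G_1|,|G_2|)$, so no local type of degree exceeding $d_m$ occurs, $\mathcal K_{\textsf{ff}}(\FF,d_m)=\KK$, the constraint system stabilizes, and $\PP(H_1,d_m)$ literally coincides with the LSIP $\PP(H_1)$ of~(c); hence $\sigma_{d_m}(H_1)=\sigma(H_1)$, which is rational by~(b) and computable by~(d), giving~(e). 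The step I expect to be the main obstacle is the realization procedure in~(b): converting a vertex solution of the dual LP back into a genuine irreducible factor-free core graph so that the extremal equality $\bar\rr(H_1,H_2^*)=\sigma_d(H_1)\bar\rr(H_1)\bar\rr(H_2^*)$ holds exactly — every gluing, folding and trimming step must be shown not to disturb the combination of Euler-characteristic counts at stake — while keeping the total size within the doubly-exponential envelope; the other delicate point is arranging the variables so that $\bar\rr(H_1,H_2)$ becomes literally linear, with the tree-component contributions properly accounted for.
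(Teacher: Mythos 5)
Your overall architecture coincides with the paper's: encode the local structure of secondary vertices of a candidate $\Psi(H_2)$ relative to the fixed $\Psi(H_1)$ by nonnegative count variables, identify graphs with property (Bd) with feasible rational points, recover $\sigma_d(H_1)\brr(H_1)$ as the optimum of the resulting dual LP, extract $H_2^*$ from a vertex solution, bound its size by Cramer's rule and Hadamard's inequality, and obtain (c), (e) by letting $d\to\infty$ and noting stabilization at $d_m$. But the step you explicitly defer as ``the bulk of the work'' --- choosing variables so that both $\brr(H_2)$ and $\brr(H_1,H_2)$ become linear --- is exactly where the new idea is required, and your variables $x_{v,\tau}$ as described do not supply it. A vertex of $\Psi(H_2)$ does not lie over a single vertex $v$ of $\Psi(H_1)$: it may have many preimages in $\core(\Psi(H_1)\times\Psi(H_2))$, over many vertices of $\Psi(H_1)$, and the contribution of a secondary vertex $u$ of $\Psi(H_2)$ to $\brr(H_1,H_2)$ depends on the full sets $\tau_1\tau_2^{-1}(v_j)\subseteq V_P\Psi(H_1)$ attached to its primary neighbours $v_j$ --- global pairing data, not a local type over one $v$. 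If you count vertices of the product graph by local type, $\brr(H_1,H_2)$ is linear but $\brr(H_2)$ is not; if you count vertices of $\Psi(H_2)$, the reverse failure occurs. The paper's resolution is to make the ``type'' an $\al$-admissible function $\Omega_T:T\to S_1(V_P\Psi(H_1))$ assigning to each edge label at $u$ a \emph{subset} of $V_P\Psi(H_1)$ (whence the exponential number of variables), and, crucially, to prove a one-sided realization statement: an arbitrary feasible combination $Q$ of inequalities can be assembled into a graph $Y_{2,Q}$ whose actual sets $A_i(u_j)$ contain the prescribed ones, so that $\brr(\core(Y_1\times Y_{2,Q}))$ is at least --- not equal to --- the value $Q$ predicts. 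That inequality points in the right direction for the minimization and is what makes the LP optimum equal to the supremum over genuine subgroups; without it your ``clear denominators and realize'' step is unjustified.

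Two further gaps. For part (d) with $G_1,G_2$ infinite but of solvable word problem, ``enumerating the admissible local types'' is not routine: the candidate label sets $T\subseteq G_\al$ with $|T|\le d$ form an infinite family, and one must show that only finitely many distinct inequalities occur and that a complete set of representatives is computable; the paper does this by fixing a subset $C(\al,d)\subseteq G_\al$ of $d^2+d$ elements and translating each equivalence class of $T$ so as to meet it, which takes two auxiliary lemmas. Also, you never address connectedness of the realized graph $\Psi(H_2^*)$, which is needed for it to be the core graph of a single subgroup; the paper derives this from the fact that a vertex of a polyhedron cannot be a proper convex combination of two other feasible points (alternatively one could pass to a connected component of maximal ratio, at the cost of the explicit correspondence with the vertex solution).
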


It is worthwhile to mention that  the correspondence  between subgroups $H_2 \in \KKd$ and vectors  of the feasible polyhedron  $\{ y(d)  \mid A(d)^{\top}y(d) = c(d)^{\top} , \, y(d) \ge 0  \}$  of the dual problem $\PP^*(H_1, d)$,   mentioned in part (b) of Theorem~\ref{th1},  plays an important role in proofs and is reminiscent of the correspondence between (almost) normal surfaces in 3-dimensional manifolds and their (resp. almost)  normal vectors in the Haken theory of normal surfaces and its generalizations, see \cite{Haken}, \cite{HLP}, \cite{Hemion}, \cite{Iv08s}, \cite{JT}.  In particular, the idea of a vertex solution works equally well both in the context of almost normal surfaces \cite{Iv08s}, see also \cite{HLP}, \cite{JT}, and in the context of factor-free subgroups, providing in either  situation  both the connectedness of the underlying object associated with a vertex solution and an upper bound on the size of the underlying object.
\medskip

Relying on the linear programming approach of Theorem~\ref{th1}, in the following Theorem~\ref{th2}, we look at the computational complexity  of the problem to compute the WN-coefficient $\sigma(H_1)$ for a factor-free subgroup $H_1$  of the free product of two finite groups and at other relevant questions.

\begin{thm}\label{th2}  Suppose that $\FF =G_1 * G_2$ is the free product of two nontrivial finite groups $G_1,  G_2$ and $H_1$ is a subgroup of $\FF$ given by a finite generating set $\Ss$ of words over the alphabet
$ G_1 \cup G_2$. Then the following are true.
\begin{enumerate}

\item[(a)]  In deterministic polynomial time {}in the size of $\Ss$, one can
detect whether $H_1$ is factor-free and noncyclic and, if so, one can construct an  irreducible graph $\Psi_o(H_1)$  of $H_1$.
\smallskip

\item[(b)]  If  $H_1$ is factor-free and noncyclic, then, in deterministic exponential time {}in the size of $\Ss$, one can write down and solve an LP-problem $\PP = \max\{ cx \mid Ax \le b  \}$ whose solution is equal to $-\sigma(H_1) \brr (H_1)$. In particular, the $WN$-coefficient $\sigma(H_1)$ of $H_1$ is computable in exponential time {}in the size of $\Ss$.
\smallskip

\item[(c)]  If  $H_1$ is factor-free and noncyclic, then  there exists a finitely generated  factor-free subgroup
$H_2^* = H_2^*(H_1)$ of $\FF$  such that $$\bar \rr(H_1, H_2^*) =  \sigma(H_1)  \bar \rr(H_1) \bar \rr( H_2^*) $$ and the size of an irreducible core graph $\Psi(H_2^*)$ of $H_2^*$  is at most doubly exponential {}in the size of $\Psi(H_1)$. Specifically,
$$
| E \Psi(H_2^*) |  <  2^{ 2^{ | E \Psi(H_1) |/4 + \log_2 \log_2 (4d_m)  } } ,
$$
where   $\Psi(H_1)$ is an  irreducible core graph of $H_1$,   $| E\Psi |$ denotes the number of oriented edges of the graph $\Psi$, and  $d_m := \max( |G_1|, |G_2|)$.

In addition, an irreducible core graph $\Psi(H_2^*)$ of $H_2^*$  can be constructed
in deterministic exponential time {}in  the size of  $\Ss$ or  $\Psi(H_1)$.
\end{enumerate}
\end{thm}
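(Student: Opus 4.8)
The plan is to deduce all three parts from the linear-programming machinery of Theorem~\ref{th1}, so that the only genuinely new work is the control of the sizes of the objects it produces. For part~(a), I would run the standard Stallings folding construction for subgroups of a free product: build the graph consisting of one vertex carrying, for each word $w\in\Ss$, a loop subdivided so that every edge is labelled by a single letter of $G_1\cup G_2$, and then iteratively fold pairs of equally labelled edges issuing from a common vertex. Since $G_1,G_2$ are finite, each fold inspects only bounded local data and strictly decreases the edge count, so folding halts after $O(|\Ss|)$ steps and, after deleting hanging trees to pass to the core, yields in polynomial time a graph $\Psi_o(H_1)$ with fundamental group $H_1$. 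Then $H_1$ is noncyclic iff the first Betti number of this core graph is $\ge 2$, read off in linear time, and $H_1$ is factor-free iff the combinatorial criterion of Section~2 characterizing factor-freeness holds (no reduced closed path of $\Psi_o(H_1)$ uses edges from a single factor $G_i$ and is nontrivial in $G_i$); both tests and the passage to the irreducible core graph are polynomial-time operations.

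For part~(b), I would invoke Theorem~\ref{th1}(e): with $d_m=\max(|G_1|,|G_2|)\ge 3$, the LP-problem $\PP(H_1,d_m)$ of Theorem~\ref{th1}(a) has integer coefficients and optimal value $-\sigma(H_1)\brr(H_1)$. What remains is a size estimate. The variables of $\PP(H_1,d_m)$ are indexed by the combinatorial ``types'' of local pieces that a core graph of a subgroup $H_2\in\KKd$ can display relative to the fixed graph $\Psi(H_1)$; since $d_m$ is a constant fixed by $\FF$ while a type must record compatibility data along all of $\Psi(H_1)$, the number of variables, the number of matching constraints, and the magnitudes of the coefficients are all bounded by $2^{O(|E\Psi(H_1)|)}$, hence by $2^{O(|\Ss|)}$ via part~(a). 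Thus $(A,b,c)$ is written down in time $2^{O(|\Ss|)}$, the LP is solved exactly in time polynomial in the bit length of $(A,b,c)$ by a polynomial-time linear-programming algorithm, and, dividing by the known value $\brr(H_1)$, the rational number $\sigma(H_1)$ is obtained within exponential time in $|\Ss|$.

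For part~(c), the existence of $H_2^*$ with $\brr(H_1,H_2^*)=\sigma(H_1)\brr(H_1)\brr(H_2^*)$ and $|E\Psi(H_2^*)|<2^{2^{|E\Psi(H_1)|/4+\log_2\log_2(4d_m)}}$ is Theorem~\ref{th1}(b) at $d=d_m$, where Theorem~\ref{th1}(e) is used to identify $\PP(H_1,d_m)$ with the LSIP-problem $\PP(H_1)$, so that a vertex solution of its dual genuinely corresponds to a factor-free subgroup at which the supremum defining $\sigma(H_1)$ is attained. To construct it, I would solve the dual $\PP^*(H_1,d_m)$ and, by a purification step if needed, produce a \emph{basic} optimal feasible solution $y$; the explicit correspondence of Theorem~\ref{th1}(b),(d) then reads off a core graph of a factor-free subgroup from a nonnegative integer point of the dual polyhedron, with $y$ prescribing how many copies of each piece to take and how to glue them, the result being $\Psi(H_2^*)$. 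The support of $y$ has size at most the number of constraints of $\PP(H_1,d_m)$, i.e.\ $2^{O(|\Ss|)}$, while the multiplicities in $y$ can be as large as the doubly-exponential bound; hence one outputs $\Psi(H_2^*)$ in the compressed form consisting of $y$ together with the gluing pattern, which takes time $2^{O(|\Ss|)}$.

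The main obstacle I anticipate is the size accounting in part~(b): one must pin down exactly which combinatorial data constitute a ``type'', and show that the number of types and of gluing constraints is only singly exponential in $|E\Psi(H_1)|$; this is where the specific combinatorics of irreducible core graphs from Section~2 (the degree bound $d_m$ and the rigidity of the syllable structure) has to be used. A secondary, bookkeeping-type difficulty in part~(c) is keeping the construction within exponential time even though $|E\Psi(H_2^*)|$ may be doubly exponential, which forces the multiplicities to be handled in binary rather than by listing edges and requires checking that the purification and the read-off of the gluing pattern respect this compressed representation.
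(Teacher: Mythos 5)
Your proposal follows the same route as the paper's proof: part (a) is the folding algorithm of Lemma~\ref{Lm1}; part (b) is Theorem~\ref{th1}(e) combined with the observation that $\SLI_{d_m}[Y_1]$ has only exponentially many variables $x_A$, $A\subseteq V_P\Psi(H_1)$, and exponentially many constraints (indexed by $\al$-admissible functions $\Omega_T$ with $|T|\le d_m$, enumerable and checkable because $G_1,G_2$ are finite); and part (c) reads off $\Psi(H_2^*)$ from a vertex solution of the dual LP, with the size bound coming from Lemma~\ref{lem6}. The one caveat is that what you set aside as a ``secondary, bookkeeping-type difficulty'' in part (c) --- producing the gluing pattern in exponential time even though $|E\Psi(H_2^*)|$ may be doubly exponential --- is in fact the main new technical content of the paper's argument (Lemma~\ref{lem8}): the involution pairing individual occurrences of the terms $\pm x_D$ in $\sum_{q\in Q_V}q^L$ cannot be listed term by term, so the paper computes, for each variable $x_D$, a weight function $\omega_D$ on an auxiliary bipartite graph $\Lambda_D$ whose vertices are the inequalities in the support of the vertex solution, and uses these weights to partition the occurrences into blocks and draw the edges of $\Psi(H_2^*)$ blockwise. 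Your plan correctly identifies that multiplicities must be handled in binary and that the output is necessarily succinct, but it does not supply this construction, which is where the actual work of part (c) lies.
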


It is of interest to observe that our construction of the  graph $\Psi(H_2^*)$ is somewhat succinct (cf. the definition of succinct representations of  graphs in \cite{PCC}) in the sense that,
despite the fact that the size of $\Psi(H_2^*)$ could be doubly exponential, we are able to give a description of $\Psi(H_2^*)$  in exponential time.  In particular, vertices of  $\Psi(H_2^*)$    are represented by exponentially long bit strings and edges of $\Psi(H_2^*)$  are  drawn in blocks. As a result,
we can find out in exponential time  whether two given vertices of  $\Psi(H_2^*)$  are connected by an edge.
\smallskip

The situation with free products of more than two factors is more difficult to study and we will make additional efforts to  obtain the following results.

\begin{thm}\label{th3}
Suppose  that $\FF = \prod_{\alpha \in I}^* G_\alpha$ is
the  free product  of  nontrivial groups   $G_\al$, $\al \in I$, and $H_1$ is a
finitely generated factor-free noncyclic subgroup of $\FF$.  Then there are two  disjoint finite subsets
$I_1, I_2$ of the index set $I$ such that if \
$\wht G_1 := \prod_{\alpha \in I_1}^* G_\alpha$,  \   $\wht G_2 := \prod_{\alpha \in I_2}^* G_\alpha$,
 and      $\wht \FF := \wht G_1 * \wht G_2$,
then there exists a finitely generated factor-free subgroup $\wht H_1 $ of $\wht \FF$  with the following properties.

\begin{enumerate}
\item[(a)]  $\brr (\wht H_1  ) = \brr (H_1  ) $,
$\sigma_d(\wht H_1) \ge \sigma_d(H_1)$ for every $d \ge 3$,  and
$\sigma(\wht H_1) \ge \sigma(H_1)$. In particular, if the conjecture
\eqref{conjs} fails for $H_1$ then the conjecture \eqref{conjs} also fails for $\wht H_1$.
\smallskip

\item[(b)]  If the word problem for every group $G_\al$, where $\al \in I_1 \cup I_2$,
is solvable and  a finite irreducible graph  of $H_1$ is given, then the
LP-problem $\PP(\wht H_1, d)$  for  $\wht H_1$   of part (a) of Theorem~\ref{th1}  can be algorithmically written down and the WN${}_d$-coefficient $\sigma_d(\wht H_1)$ for $\wht H_1$  can be computed.
\smallskip

\item[(c)]  Let every group $G_\al$, where $\al \in I_1 \cup I_2$,
be finite, let $H_1$ be  given either by
a finite irreducible graph or by a finite generating set, and let
$$
{d_{M}} := \max \Big\{ |I_1 \cup I_2| , \max\{ |G_\al |  \mid   \al \in I_1 \cup I_2 \} \Big\} .
$$
Then $\sigma_{d_{M}}(\wht H_1) \ge \sigma(H_1)$
and there is an algorithm that decides whether the conjecture  \eqref{conjs} holds for $H_1$.
\end{enumerate}
\end{thm}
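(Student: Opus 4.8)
\smallskip
\noindent\textit{Plan of proof.}
The strategy is to reduce Theorem~\ref{th3} to the two-factor situation already settled by Theorems~\ref{th1} and~\ref{th2}, the reduction being carried out on irreducible core graphs.

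\emph{Step 1: passing to finitely many factors.} First I would take an irreducible core graph of $H_1$ over $\FF$ and let $I_0 \sbs I$ be the finite set of those $\al$ for which $G_\al$ labels an edge of it; if $q^* = q^*(\FF)$ is finite I enlarge $I_0$ by one index realizing $q^*$, so that $q^*$ of the sub-free-product over $I_0$ is still $q^*$. Put $\FF_0 := \prod_{\al \in I_0}^* G_\al$, a free product of $n := |I_0| \ge 2$ factors ($n\ge 2$ since $H_1$ is factor-free and noncyclic), with $H_1 \le \FF_0 \le \FF$. Using the fibre-product descriptions of reduced ranks and of generalized intersections from Section~2, I would check that for every factor-free $H_2 \le \FF$ the subgroup $H_2'$ carried by the subgraph of an irreducible core graph of $H_2$ spanned by the $G_\al$-labelled edges with $\al \in I_0$, cored and with a component chosen to meet $H_1$, satisfies $\brr(H_2') \le \brr(H_2)$, $\brr(H_1,H_2') \ge \brr(H_1,H_2)$, and has no vertex of larger degree than $H_2$; hence $\sigma_d(H_1)$ and $\sigma(H_1)$, and the validity of \eqref{conjs} for $H_1$, are the same over $\FF$ as over $\FF_0$. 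From now on I work in $\FF_0$.

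\emph{Step 2: collapsing $n$ factors to two.} I fix any partition $I_0 = I_1 \sqcup I_2$ into two nonempty blocks and put $\wht G_j := \prod_{\al \in I_j}^* G_\al$, $\wht\FF := \wht G_1 * \wht G_2$, so that $\wht\FF = \FF_0$ as an abstract group but is now displayed as a free product of two nontrivial factors. Since $H_1$ may meet conjugates of $\wht G_1$ or $\wht G_2$ nontrivially, it need not be factor-free over $\wht\FF$; so the plan is to produce $\wht H_1$ by modifying an irreducible core graph $\Psi(H_1)$ of $H_1$ over $\FF_0$, inserting along selected edges short \emph{detours} through freshly added white vertices whose colours are taken from the opposite block and of the least order available, together with new black vertices, arranged so that (i) no reduced closed path of the resulting graph $\wht\Psi$ reads a word conjugate into $\wht G_1$ or into $\wht G_2$, and (ii) no black vertex is adjacent to two whites coloured from the same block; each detour changes the numbers of edges and of vertices by the same amount. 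Then $\wht\Psi$ represents a factor-free subgroup $\wht H_1 \le \wht\FF$ with $\brr(\wht H_1)=\brr(H_1)$, and because of (ii) it is already in the shape of a core graph over $\wht\FF$ (blacks of degree $\le 2$) whose irreducible core over $\wht\FF$ has every $\wht G_j$-white inherited from a single $G_\al$-white, hence of degree $\le |G_\al|$; so every vertex has degree at most $\max\{n, \max\{|G_\al| : \al \in I_0\}\}$.

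\emph{Step 3: transferring the invariants.} Applying the same detour construction to an irreducible core graph of a competitor $H_2$ over $\FF_0$ yields a factor-free $\wht H_2$ over $\wht\FF$ with $\brr(\wht H_2) \le \brr(H_2)$; compatibility of the detours with the fibre-product formula for $\brr(\cdot,\cdot)$ gives $\brr(\wht H_1,\wht H_2) \ge \brr(H_1,H_2)$, and no degree goes up. With $\brr(\wht H_1) = \brr(H_1)$ this gives $\sigma_d(\wht H_1) \ge \sigma_d(H_1)$ for all $d\ge 3$ and $\sigma(\wht H_1) \ge \sigma(H_1)$; the ``in particular'' of part (a) follows since $q^*(\wht\FF) \ge q^*(\FF)$ (indeed $=$, by the choice of $I_0$) and $x \mapsto \tfrac{x}{x-2}$ is decreasing. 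Part (b) is then immediate: $\wht\FF = \wht G_1 * \wht G_2$ is a free product of two nontrivial groups whose word problems reduce to those of the $G_\al$ with $\al \in I_1 \cup I_2$, and $\wht\Psi$ is constructible, so Theorem~\ref{th1}(a),(d) applied to $\wht H_1$ over $\wht\FF$ delivers $\PP(\wht H_1,d)$ and the value $\sigma_d(\wht H_1)$. For part (c), assume every $G_\al$ with $\al \in I_0$ finite; then $\wht G_1,\wht G_2$ are finite and $d_M = \max\{n, \max\{|G_\al| : \al \in I_0\}\}$. Over $\FF_0$ every factor-free subgroup has an irreducible core graph of degree $\le d_M$ (blacks of degree $\le n$, $G_\al$-whites of degree $\le |G_\al|$), so $\sigma_{d_M}(H_1)=\sigma(H_1)$; conversely, any degree-$\le d_M$ competitor $\wht H_2$ over $\wht\FF$ is automatically factor-free over $\FF_0$ (the factors of $\wht\FF$ contain those of $\FF_0$), its core graph over $\FF_0$ obtained by expanding the $\wht G_j$-whites over the relevant sub-free-products still has degree $\le d_M$, and, inserting detours into $H_1$'s core graph being unable to increase the reduced rank of a fibre product with a fixed graph, $\brr(H_1,\wht H_2) \ge \brr(\wht H_1,\wht H_2)$; so $\sigma_{d_M}(\wht H_1) \le \sigma(H_1)$ as well, whence $\sigma_{d_M}(\wht H_1)=\sigma(H_1)$. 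The algorithm then computes this rational number by Theorem~\ref{th1}(d) and compares it with $\tfrac{q^*}{q^*-2}$, which by Step~1 decides \eqref{conjs} for $H_1$.

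\emph{Main obstacle.} The hard part is Step 2 together with the fibre-product bookkeeping of Step 3: one must make the detour construction precise enough that $\wht H_1$ really is factor-free over $\wht\FF$, that the reduced ranks of the subgroups and of all generalized intersections are preserved (or correctly one-sidedly controlled) by it, and that the degree bounds survive the passage to irreducible core graphs. In the finite-group case of part (c) the additional delicate point is that this control must be two-sided, so that $\sigma_{d_M}(\wht H_1)=\sigma(H_1)$ holds \emph{exactly}, which is what the decision procedure requires.
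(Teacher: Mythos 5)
Your overall strategy --- cut down to the finitely many factors that actually occur in $\Psi(H_1)$, regroup them into two blocks, replace $H_1$ by a modified subgroup that is factor-free for the two-block decomposition, and then invoke Theorem~\ref{th1} --- is the same as the paper's, and your Step~1 is essentially Lemma~\ref{lemsup}. The gap is in Steps~2--3, and it is exactly the point you yourself flag as the ``main obstacle'': you never construct the ``detours,'' and the properties you need from them are the entire content of the theorem. The paper's construction is the explicit map \eqref{map2}, $a_\al \mapsto w_\al^{-1} a_\al w_\al$ with $w_\al$ the cyclic permutation $g_{\al+1}\cdots g_m g_1\cdots g_\al$ of one fixed word $g_1\cdots g_m$; this extends to a monomorphism $\mu$ of $\FF$ whose image is \emph{antinormal} (Lemma~\ref{lemmap2}(b)), and antinormality is what forces every conjugator realizing $\mu(K_1)\cap T\mu(K_2)T^{-1}\ne\{1\}$ to lie in $\mu(\FF)$, hence gives a bijection of the relevant double cosets and the exact equality $\brr(K_1,K_2)=\brr(\mu_2(K_1),\mu_2(K_2))$ of Lemma~\ref{lemmap2}(d). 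Your condition (i) (no reduced closed path reads a word conjugate into $\wht G_1$ or $\wht G_2$) only delivers factor-freeness of $\wht H_1$; it says nothing about how generalized intersections transform, and for an arbitrary detour assignment (i.e., conjugating different letters by unrelated words) the correspondence of double cosets genuinely can fail, so $\brr(\wht H_1,\wht H_2)\ge\brr(H_1,H_2)$ cannot be obtained by fibre-product bookkeeping alone. Without an antinormality-type statement the inequalities $\sigma_d(\wht H_1)\ge\sigma_d(H_1)$ of part (a) --- and everything downstream of them --- remain unproved.

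A secondary discrepancy: in part (c) you assert the two-sided equality $\sigma_{d_M}(\wht H_1)=\sigma(H_1)$, whereas the paper proves only $\sigma_{d_M}(\wht H_1)\ge\sigma(H_1)$, via the chain $\sigma(H_1)=\sigma_{d_M}(H_1)\le\sigma_{d_M}(\wht H_1)$ (the first equality because any competitor with $I(H_2)\subseteq I(H_1)$ has all vertex degrees bounded by $d_M$, the second from Lemma~\ref{lemmap2}(e)). Your argument for the reverse inequality rests on the claim that inserting detours into $\Psi(H_1)$ cannot increase the reduced rank of a fibre product with a fixed graph, which is again precisely the uncontrolled step; so part (c) inherits the same gap. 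Finally, note that the paper does not take an arbitrary partition of the index set: it uses $I_1=\{1\}$ and $I_2=\{2,\dots,m\}$, with the single conjugating word $g_1\cdots g_m$ chosen globally so that $\mu(a_1)\mu(a_2)$ is cyclically reduced for letters from distinct factors; this coherence across all factors is what your per-edge, per-block detour recipe is missing.
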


We remark that the proofs of Theorems~\ref{th2}--\ref{th3} provide a practical deterministic algorithm
(with exponential running time, though) to compute the WN-coefficient $\sigma(H_1) $ for a finitely generated factor-free subgroup $H_1$ of the free product of two finite groups and to determine whether a certain finitely generated factor-free subgroup of a free product of  finite groups satisfies the  conjecture \eqref{conjs}. It would be of interest to implement this algorithm and experiment with it.
\smallskip

The article is structured as follows.
In Section~2,  we define basic notions and recall geometric ideas that are used to study finitely generated  factor-free subgroups and their intersections in a free product $\FF$. In particular, we define a finite labeled graph $\Psi(H)$ associated with such a subgroup $H$ of $\FF$.  In Section~3, we consider the free product  $\FF = G_1 * G_2$  of two nontrivial groups $G_1, G_2$ and introduce certain linear inequalities associated with the groups $ G_1, G_2 $ and with the graph $\Psi(H_1)$ of $H_1$, where $H_1$ is a finitely generated  factor-free noncyclic subgroup of $\FF$.
Informally, these inequalities are used for construction of cores of potential fiber product graphs $\Psi(H_1) \times \Psi(H_2)$, where $H_2$ is another finitely generated  factor-free subgroup of $\FF$, and for subsequent translation to linear programming problems. More formally,  these inequalities enable us to define an  LP-problem
$
\max\{ c(d)x(d) \mid A(d) x(d) \le b(d) \}
$,
corresponding to $\Psi(H_1)$ and to an integer $d \ge 3$, and to define an LSIP-problem $\sup \{ cx \mid A x \le b \}$, corresponding to $\Psi(H_1)$. We also consider and make use of the dual problems of the primal problems $$
\max\{ c(d)x(d) \mid A(d) x(d) \le b(d) \} , \quad  \sup \{ cx \mid A x \le b \} .
 $$
Basic results and terminology of linear programming are discussed in Section~4.
These LP-,  LSIP-problems and their dual problems are investigated in Sections~3--4. In Section~5, we look at the  case of  free products of more than two groups and prove a few more technical lemmas. Proofs of  Theorems~\ref{th1}--\ref{th3}  are given in Section~6.

\section{Preliminaries}

Let $G_\al$, $\al \in I$,  be  nontrivial
groups,  let $\FF = \prod_{\alpha \in I}^* G_\alpha$ be their  free product, and let $H$ be  a finitely generated factor-free subgroup of $\FF$, $H \ne \{ 1\}$.
Consider the alphabet $\A = \bigcup_{\al \in I} G_\al$, where $G_{\al} \cap G_{\al'} =\{ 1\}$ if $\al \ne \al'$.
\smallskip

Analogously to the graph-theoretic approach of articles \cite{DIv}, \cite{DIv2}, \cite{Iv99}, \cite{Iv01}, \cite{Iv08}, \cite{Iv10}, \cite{Iv12},
we first define a labeled $\A$-graph $\Psi(H)$ which  geometrically represents $H$ in a manner similar  to the way Stallings  graphs represent  subgroups of a free group, see \cite{St}.
\smallskip

If $\Gamma$ is a graph, $V \Gamma $ denotes the vertex set of  $\Gamma $ and  $E \Gamma $ denotes the  set of oriented edges of $\Gamma$. For $e \in E \Gamma $ let  $e_-$, $e_+$ denote the initial, terminal, resp., vertices of  $e$ and let $e^{-1}$ be the edge with the opposite orientation, where $e^{-1} \ne  e$ for every $e \in E\Gamma$,  $(e^{-1})_- = e_+$, $(e^{-1})_+ = e_-$.
\smallskip

A  {\em path} $p = e_1 \dots e_k$ in $\Gamma$ is a sequence of edges $e_1, \dots, e_k$ such that $(e_{i})_+ = (e_{i+1})_-$,  $i=1, \dots, k-1$.
Define $p_- := (e_1)_-$, $p_+ := (e_k)_+$, and $|p| := k$, where $|p| $ is the {\em length} of $p$.
We  allow the possibility  that $p = \{ p_- \} = \{ p_+ \}$ and $|p| = 0$. A path $p$ is {\em closed} if $p_- =p_+$. A path $p  $ is called {\em reduced} if  $p$ contains no subpaths of the form $e e^{-1}$, $e \in E\Gamma$.
A closed path $p = e_1 \dots e_k$  is {\em cyclically reduced} if $|p| >0$ and both $p$ and the cyclic permutation
$e_2 \dots e_k e_1$ of $p$ are reduced paths. The  {\em core} of a graph $\Gamma$,  denoted  $\mbox{core}(\Gamma)$,
is the minimal subgraph of $\Gamma$ that contains every  edge $e$ which
can be included into a cyclically reduced path in $\Gamma$.
\smallskip

Let $\Psi$ be a  graph whose vertex set  $V \Psi$  consists of two disjoint nonempty parts  $V_P \Psi,  V_S \Psi$, so $V \Psi = V_P \Psi \cup V_S \Psi$. Vertices in  $ V_P \Psi$ are called {\em primary} and vertices in  $ V_S \Psi$ are called {\em secondary}.
Every edge $e \in E \Psi$ connects  primary and secondary vertices, hence, $\Psi$ is a bipartite graph.
\smallskip

$\Psi$ is called a  {\em labeled $\A$-graph}, or briefly {\em $\A$-graph},  if  $\Psi$ is equipped with functions
 \begin{equation*}
\ph : E\Psi  \to \A , \qquad \btt :  V_S \Psi \to I
\end{equation*}
such that, for every edge $e \in E\Psi$, it is true that
$$
\ph(e) \in \A = \bigcup_{\al \in I} G_\al , \quad   \ph(e^{-1}) =  \ph(e)^{-1} ,
$$
and,  if $e_+ \in V_S \Psi$, then $\ph(e) \in G_\al$, where $\al = \btt(e_+)$.

If  $e_+ \in V_S \Psi$, define
$$
\btt(e) :=\btt(e_+),  \qquad   \btt(e^{-1}) :=\btt(e_+)
$$
and call $\btt(e_+)$, $\btt(e)$    the {\em type} of  a vertex $e_+ \in V_S \Psi$ and of an edge $e \in E\Psi$. Thus, for every   $e \in E\Psi$, we have defined  an element
$\ph(e) \in \A$,  called the {\em label} of $e$, and an element $\btt(e)   \in I$, called the {\em type} of $e$.
\smallskip

The reader familiar with  van Kampen diagrams over a free product of groups,
as defined in \cite{LS}, will recognize that our labeling function $\ph : E\Psi \to \A $  is defined in the way analogous to labeling functions on van Kampen diagrams over free products of groups. Recall that van Kampen diagrams are planar 2-complexes whereas  graphs are 1-complexes, however, apart from this, the ideas of cancellations and edge folding  work equally well for both diagrams and  graphs.
\smallskip

An $\A$-graph $\Psi$ is called {\em irreducible} if  the following properties (P1)--(P3) hold true.
\begin{enumerate}
\item[(P1)] If $e, f \in E \Psi$,  $e_- = f_- \in  V_P \Psi$, and $e_+ \ne f_+$, then   $\btt(e) \ne \btt(f)$.
\item[(P2)] If $e, f \in E \Psi$,  $e \ne f$, and $e_+= f_+ \in V_S \Psi$, then  $\ph(e) \ne  \ph(f)$ in $G_{\btt(e)}$.
\item[(P3)] $\Psi$ has no multiple edges, $\mbox{deg}_\Psi v >0$ for every $v \in V\Psi$,
  and there is at most one vertex of degree 1 in $\Psi$ which, if exists, is primary.
    \end{enumerate}

Suppose $\Psi$ is a connected  finite
irreducible $\A$-graph and a primary vertex $o \in V_P\Psi$ is distinguished so that $\deg_\Psi o =1$
if $\Psi$ happens to have a vertex of degree 1. Then  $o$ is called the {\em base} vertex of $\Psi = \Psi_o$.
\medskip

As usual, elements of the free product  $\FF = \prod_{\alpha \in I}^* G_\alpha$
are regarded  as words over the alphabet $\A = \bigcup_{\al \in I} G_\al$, where
$G_{\al} \cap G_{\al'} =\{ 1\}$ if $\al \ne \al'$.
A {\em syllable} of a word $W$ over $\A$ is a maximal nonempty subword of $W$
all of whose letters belong to the same factor $G_\al$. The {\em syllable length} $\| W \|$ of $W$
is the number of syllables
of $W$, while the {\em length} $|W|$ of $W$ is the number of all
letters in $W$. For example, if $a_1, a_2 \in G_\al$, then $| a_1 1 a_2 | = 3$, $\| a_1 1 a_2 \| = 1$, and
$| 1| = \| 1 \| = 1$.
\smallskip

A nonempty word $W$ over   $\A $ is called {\em reduced} if every
syllable of $W$ consists of a single letter.
Clearly, $|W| = \|
W \|$ if $W$ is reduced. Note that an arbitrary nontrivial element of the
free product $\FF$  can  be uniquely
written as a reduced word. A word $W$  is called {\em cyclically reduced} if $W^2$ is reduced.
We write $U \overset 0 = W$ if words $U$, $W$
are equal as elements of $\FF$. The
literal (or letter-by-letter) equality of words $U$, $W$ is denoted $U \equiv W$.
\smallskip

If $p = e_1 \dots e_k$ is a path in an  $\A$-graph $\Psi$ and $e_1, \dots, e_k$ are edges of $\Psi$, then the {\em label} $\ph(p)$ of $p$ is the word $\ph(p) := \ph(e_1) \dots \ph(e_k)$.
\smallskip

The significance of irreducible $\A$-graphs for geometric interpretation of  factor-free subgroups $H$ of $\FF$ is given in the following lemma.

\begin{lem}\label{Lm1} Suppose $H$ is a finitely generated factor-free subgroup
of the free product $\FF =  \prod_{\alpha \in I}^* G_\alpha$, $H \ne \{ 1 \}$. Then there exists a finite connected
irreducible $\A$-graph $\Psi = \Psi_o(H)$, with a  base vertex $o$,  such that a reduced word $W$ over the alphabet $\A$ belongs to $H$ if and only if there is a reduced path $p$ in $\Psi_o(H)$  such that $p_- =p_+
=o$,  $\ph(p) \overset 0  = W$ in   $\FF$, and $| p | = 2|W|$.
\smallskip

In addition, assume that all factors $G_\alpha$, $\alpha \in I$, are finite and  $V_1, \dots, V_k$  are  words over $\A$. Then there is a deterministic algorithm which, in  polynomial time depending on the sum $|V_1|+ \dots+ |V_k|$, decides whether the subgroup  $H_V = \langle V_1, \dots, V_k \rangle$, generated by $V_1, \dots, V_k$, is factor-free and, if so, constructs an irreducible $\A$-graph $\Psi_o(H_V)$ for $H_V$.
\end{lem}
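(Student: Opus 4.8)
The plan is to build $\Psi_o(H)$ by a Stallings-type folding procedure adapted to the free-product setting, and then to verify the stated characterization of elements of $H$ via reduced closed paths, with the length constraint $|p|=2|W|$ forcing the bipartite ``primary/secondary'' alternation. First I would take a finite generating set $W_1,\dots,W_k$ of $H$, write each $W_i$ as a reduced word over $\A$, and construct an initial $\A$-graph $\Psi^{(0)}$ as a ``flower'': a bouquet of $k$ loops at a base vertex $o\in V_P\Psi$, where the loop reading $W_i$ is subdivided so that edges alternate between primary and secondary vertices, each maximal syllable of $W_i$ (which, $W_i$ being reduced, is a single letter $g\in G_\al$) being carried by a length-two segment $o\text{-type}\to \text{secondary of type }\al\to o\text{-type}$, with the two edges labeled $g$ and $g^{-1}$ appropriately so that $\ph(e^{-1})=\ph(e)^{-1}$ and the labeling condition at secondary vertices holds. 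Thus from the outset $\Psi^{(0)}$ satisfies the bipartite structure and the labeling axioms, and a reduced path from $o$ to $o$ of the right alternating form spells exactly a product of the $W_i^{\pm1}$.

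Next I would perform two kinds of reduction moves until none applies. \emph{Primary folding}: if $e,f\in E\Psi$ with $e_-=f_-\in V_P\Psi$, $\btt(e)=\btt(f)$, and $\ph(e)=\ph(f)$ in $G_{\btt(e)}$ but $e_+\ne f_+$ (or $e\ne f$), identify $e$ with $f$ and $e_+$ with $f_+$; this is the analogue of the usual Stallings fold, eventually forcing (P1). \emph{Secondary folding / syllable absorption}: at a secondary vertex $v$ of type $\al$, if two distinct edges $e,f$ with $e_+=f_+=v$ have $\ph(e)=\ph(f)$ in $G_\al$, identify them, which forces (P2); and if $\ph(e)\ph(f^{-1})$ collapses (e.g.\ equals $1$ in $G_\al$, creating a backtracking pair $ee^{-1}$-type cancellation through $v$), splice the two incident primary edges directly — this is where two syllables in the same factor merge into one, keeping all words reduced. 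After no move applies, pass to $\core$ of the resulting graph relative to $o$ (deleting edges not on any cyclically reduced closed path, but retaining $o$ even if it becomes degree~$1$), and remove multiple edges by a final fold; this yields (P3). Each move strictly decreases $|E\Psi|$ or $|V\Psi|$, so the process terminates, and one checks by a straightforward induction that each move preserves the set $\{\,\ph(p)\overset{0}{=}W : p\text{ a reduced }o\text{-}o\text{ path with }|p|=2|W|\,\}$ of elements represented — this is the routine but slightly delicate bookkeeping step.

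For the characterization itself: if $W\in H$ then $W\overset{0}{=}W_{i_1}^{\e_1}\cdots W_{i_m}^{\e_m}$, which traces a closed path at $o$ in $\Psi^{(0)}$, hence (by invariance) a closed path $q$ at $o$ in $\Psi_o(H)$; reducing $q$ and using that $\Psi_o(H)$ is reduced and factor-free $H$ has no relations from the $G_\al$ gives a reduced $p$ with $\ph(p)\overset{0}{=}W$, and the alternating bipartite structure forces $|p|=2|W|$ since each letter of the reduced word $W$ is read by exactly one $V_P\to V_S$ edge followed by one $V_S\to V_P$ edge. Conversely, any reduced $o$-$o$ path $p$ has label a product of the generators read off along $p$, hence $\ph(p)\in H$; and $\ph(p)\overset{0}{=}W$ with $|p|=2|W|$ then gives $W\in H$ after noting $W$ and $\ph(p)$ have the same reduced form. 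I expect the \textbf{main obstacle} to be verifying that the folding process genuinely terminates in an \emph{irreducible} graph satisfying all of (P1)--(P3) simultaneously without reintroducing a previously removed defect — in particular that syllable-absorption at secondary vertices and core-taking do not create new non-foldedness — together with checking that the length bookkeeping $|p|=2|W|$ is preserved throughout; this requires a careful choice of the order of moves (e.g.\ alternating folding with core-passage, or arguing a confluence/Newman-type lemma).

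For the algorithmic addendum, with all $G_\al$ finite the entire construction is effective: the flower $\Psi^{(0)}$ has $O(\sum|V_i|)$ edges; each fold is detectable and performable in polynomial time (comparing labels is a constant-time lookup in the finite multiplication tables, and equality of group elements is decidable); the number of folds is bounded by $|E\Psi^{(0)}|$; and core-taking plus the final multiple-edge cleanup are polynomial. Testing whether $H_V=\langle V_1,\dots,V_k\rangle$ is factor-free amounts to checking, on the resulting folded graph, that no secondary vertex $v$ together with its incident edge-labels spans a coset structure forcing $H\cap sG_{\btt(v)}s^{-1}\ne\{1\}$ — concretely, that at each secondary vertex the labels of incident edges do not generate a nontrivial subgroup of $G_{\btt(v)}$ realized by a closed path at $o$; this inspection is again polynomial in the size of the folded graph, hence polynomial in $\sum|V_i|$. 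I would phrase this last test as: $H_V$ is factor-free iff in the folded graph every secondary vertex has the property that the subgroup of $G_{\btt(v)}$ generated by $\{\ph(e): e_+=v\}$, when ``pulled back'' along paths to $o$, is trivial — equivalently, no nonidentity element of any $G_\al$ labels a closed reduced path of length~$2$ based at $o$ after conjugation, a condition readable directly from the graph.
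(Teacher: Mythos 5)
Your overall strategy (flower graph at a base vertex, two edges per letter through a secondary vertex, then folding until (P1)--(P3) hold) is the same as the paper's, but your folding procedure has a genuine gap at the step that is supposed to force (P1). You only fold two edges $e,f$ with $e_-=f_-\in V_P\Psi$ and $\btt(e)=\btt(f)$ when additionally $\ph(e)=\ph(f)$ in $G_{\btt(e)}$. Property (P1), however, forbids \emph{any} two edges of the same type at a primary vertex leading to distinct secondary vertices, regardless of their labels; and in the initial flower such pairs with unequal labels occur immediately (two generators whose first letters lie in the same factor $G_\al$). With your moves nothing applies to such a pair, the process halts, and the resulting graph is not irreducible. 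The missing idea is that the $\ph$-labels of the edges incident to a fixed secondary vertex $v$ are only a gauge choice: one may multiply all of them on the right by a fixed element of $G_{\btt(v)}$ without changing any path label $\ph(e)\ph(f)^{-1}$ read through $v$. The paper exploits exactly this: before folding $e$ onto $f$ it relabels every edge $e'$ with $e'_+=e_+$ by right multiplication by $\ph(e)^{-1}\ph(f)$, after which $\ph(e)=\ph(f)$ and the identification is legitimate. Without this relabeling step your induction on $|E\Psi|$ does not go through. (Relatedly, your ``syllable absorption/splicing'' move is unnecessary once (P1)--(P2) are enforced this way; no merging of primary edges is ever needed in the bipartite model.)

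Your factor-freeness criterion is also not correct as stated. It is not true that $H_V$ is factor-free iff ``the subgroup of $G_{\btt(v)}$ generated by $\{\ph(e):e_+=v\}$, pulled back to $o$, is trivial'': in any interesting irreducible graph a secondary vertex has several incident edges with distinct labels generating a nontrivial subgroup of $G_\al$, and this is perfectly compatible with factor-freeness. The correct test, which falls out of the folding process, is local and concerns multiple edges: if at some stage two \emph{distinct} edges $e,f$ arise with $e_-=f_-$ and $e_+=f_+\in V_S\Psi$ and $\ph(e)\ne\ph(f)$ in $G_{\btt(e)}$, then the closed length-two path $e f^{-1}$ shows that a conjugate of the nontrivial element $\ph(e)\ph(f)^{-1}\in G_{\btt(e)}$ lies in $H_V$, so $H_V$ is not factor-free; if $\ph(e)=\ph(f)$ one simply identifies $e$ with $f$. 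If the process terminates without ever meeting the first alternative, the resulting graph satisfies (P1)--(P3) and factor-freeness is automatic, since properties (P1)--(P2) force the label of every nontrivial reduced closed path at $o$ to be a reduced word of syllable length $|p|/2>1$ (this is also where $|p|=2|W|$ comes from), so $H$ meets no conjugate of a factor. Your polynomial-time analysis is otherwise in line with the paper's (finiteness of the $G_\al$ keeps labels of bounded size and each fold decreases $|E\Psi|$).
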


\begin{proof} The proof is based on Stallings's folding techniques and is somewhat analogous to the proof of van Kampen lemma for diagrams over free products of groups, see \cite{LS} (in fact, it is simpler because foldings need not preserve the property of being planar for  diagrams).
\smallskip

Let  $H_V = \langle V_1, \dots, V_k \rangle$ be a subgroup of $\FF$,
generated by some words $V_1, \dots, V_k$ over  $\A$. Without loss of generality we can assume that  $V_1, \dots, V_k$  are  reduced  words. Consider a graph $\wtl \Psi$ which consists of $k$ closed paths $p_1, \dots, p_k$ such that they have a single common vertex $o = (p_i)_-$, and $|p_i| = 2|V_i|$, $i=1, \dots, k$.
We distinguish  $o$ as the base vertex of $\wtl \Psi$ and call $o$  primary, the vertices adjacent to $o$ are called secondary vertices and so on.  Denote $V \equiv a_{i,1} \dots a_{i,\ell_i}$, where $a_{i,j} \in \A$ are letters, $i =1, \dots, k$, and let
$p_i =  e_{i,1} f_{i,1} \dots e_{i,\ell_i}f_{i,\ell_i}$, where $e_{i,j}, f_{i,j}$ are edges of the path $p_i$. The labeling functions $\ph, \btt$ on the path $p_i$ are defined so that if  $a_{i,j} \in G_{\al(i,j)}$, then
\begin{align*}
\btt(e_{i,j}) & := \al(i,j),  \hskip-2.3cm  &  \btt(f_{i,j}) & := \al(i,j) ,  \\
 \ph(e_{i,j}) & := a_{i,j}b_{i,j}^{-1},  \hskip-2.3cm  &  \ph(f_{i,j}) & := b_{i,j} ,
\end{align*}
where $b_{i,j}$ is an element of the group  $G_{\al(i,j)}$.
\smallskip

Clearly,  $\ph(p_i) \overset 0 = V_i$ in $\FF$ for all $i=1, \dots, k$.
\smallskip

It is also clear that $\wtl \Psi=\wtl \Psi_o$ is a finite connected
$\A$-graph with the base vertex $o$ that has the following  property.
\begin{itemize}
\item[(Q)] A word $W  \in \FF$ belongs to $H$  if and only if there is a path $p$ in $\wtl \Psi_o$ such that $p_- =p_+
=o$ and  $\ph(p) \overset 0 = W$.
  \end{itemize}

However, $\wtl \Psi_o$ need not be irreducible  and we will do
foldings of edges in $\wtl \Psi_o$ which  preserve  property (Q) and which are aimed  to achieve properties (P1)--(P2).
\smallskip

Assume that property (P1) fails for edges $e, f$ with  $e_- = f_- \in V_P\wtl \Psi_o$ so that  $e_+ \ne f_+$ and  $\btt(e) =\btt(f)$.
Let us redefine the labels  of all edges  $e'$ with $e'_+ = e_+$ so that $\ph(e') \ph(e)^{-1}$ does not change
and $\ph(e) = \ph(f)$ in $G_{\btt(e)}$. This can be done by multiplication of $\ph$-labels on the right by $\ph(e)^{-1} \ph(f)$.
Since $\ph(e) = \ph(f)$ and $\btt(e) = \btt(f)$, we may now identify the edges $e$, $f$ and vertices $e_+$, $f_+$.  Observe that this folding preserves  property (Q)  ((P2) might fail) and decreases the total edge number $|E \wtl \Psi_o|$. This operation changes the labels of edges and can be done in time polynomial in $|V_1|+ \dots+ |V_k|$ if
all factors $G_\al$, $\al \in I$, are finite. Note that if $G_\al$ were not finite, then there would be a problem with increasing space needed to store   $\ph$-labels of edges and subsequent computations with larger labels.
\smallskip

If property (P2) fails for edges $e, f$ and  $\ph(e) = \ph(f)$ in $G_{\btt(e)}$, then we  identify the edges $e, f$.  Note property  (Q) still holds ((P1) might fail) and the number $|E \wtl \Psi_o|$ decreases.
\smallskip

Suppose property (P3) fails and there are two distinct edges  $e, f$ in $\wtl \Psi_o$ such that $e_- = f_-$ and $e_+ = f_+ \in V_S \wtl \Psi_o$.
If $\ph(e) \ne \ph(f)$ in $G_{\btt(e)}$, then a conjugate of  $\ph(e) \ph(f)^{-1} \in G_{\btt(e)}$ is in $H_V$, hence we conclude that
 $H_V$ is not factor-free.  So we may assume that $\ph(e) = \ph(f)$ in $G_{\btt(e)}$.
Then we identify the edges $e, f$, thus preserving  property  (Q) and decreasing the number $|E \wtl \Psi_o|$.  If property (P3) fails so that there is a vertex $v$ of degree 1, different from $o$, then we delete $v$ along with the incident edge. Clearly, property  (Q)
still holds and the number $|E \wtl \Psi_o|$ decreases.
\smallskip

Thus,  by induction on $|E \wtl \Psi_o|$ in polynomially many
(relative to $\sum_{i=1}^{k} |V_i|$) steps as described above, we either establish that the subgroup $H_V$ is not factor-free or construct an
irreducible  $\A$-graph $\Psi_o$ with property (Q).
\smallskip

It follows from the definitions and from property (Q) of the graph $\Psi_o$ that $H_V$ is factor-free (see also Lemma~\ref{Lm2}). Other stated properties of $\Psi_o$ are straightforward.
\smallskip

Finally, we observe that if  all factors $G_\alpha$, $\alpha \in I$, are finite, then the  space required to store the $\ph$-label of an edge of intermediate graphs is constant and multiplication (or inversion) of  $\ph$-labels would require  time bounded by a constant. Therefore, the above procedure implies the existence of  a polynomial algorithm for finding out whether a subgroup
 $H_V= \langle V_1, \dots, V_k \rangle $ of $\FF$ is factor-free and for construction of a  finite  irreducible $\A$-graph $\Psi_o$  for $H_V$.
\end{proof}

The following lemma further elaborates on the correspondence between finitely generated
factor-free subgroups of the free product  $\FF$ and finite
irreducible $\A$-graphs.

\begin{lem}\label{Lm2}  Let  $\Psi_o$ be a finite connected
irreducible  $\A$-graph with the base vertex  $o$ and let
$H= H(\Psi_o)$ be a subgroup of the free product
$\FF$ that consists of all words $\varphi(p)$, where $p$ is
a  path in $\Psi_o$ such that $p_- = p_+ = o$. Then $H$ is a
factor-free subgroup of  $\FF$   and
$\bar \rr(H)=- \chi(\Psi_o)$, where
$$
\chi(\Psi_o) =  |V \Psi_o | - \tfrac 12|E \Psi_o |
$$
is  the Euler characteristic of~$\Psi_o$.
\end{lem}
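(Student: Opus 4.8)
# Proof proposal for Lemma~\ref{Lm2}

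\textbf{Approach.} The plan is to verify the two assertions in order: first that $H = H(\Psi_o)$ is factor-free, then that $\bar\rr(H) = -\chi(\Psi_o)$. For the first claim I would argue by contradiction using the irreducibility properties (P1)--(P3), and for the second I would use the standard topological fact that the fundamental group of a finite connected graph is free of rank $1 - \chi$, after passing from the bipartite $\A$-graph $\Psi_o$ to an ordinary graph whose $\pi_1$ is $H$.

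\textbf{Factor-freeness.} Suppose for contradiction that $H$ is not factor-free, so there exist $s \in \FF$ and $\gamma \in I$ with $H \cap s G_\gamma s^{-1} \ne \{1\}$; equivalently, $H$ contains a nontrivial element of the form $s g s^{-1}$ with $g \in G_\gamma \setminus \{1\}$, and we may take the reduced word $s$ of minimal syllable length among all such representatives. By the defining property of $H$, there is a reduced path $p$ in $\Psi_o$ with $p_- = p_+ = o$ and $\ph(p) \overset 0 = s g s^{-1}$. I would then trace the path $p$: writing $s = a_1 \cdots a_m$ in reduced form, the path $p$ must (after the folding normalization recorded in the proof of Lemma~\ref{Lm1}, or directly from reducedness of $p$) traverse edges spelling out $s$, then a closed loop at some secondary vertex $v$ of type $\gamma$ spelling $g$, then the inverse edges spelling $s^{-1}$. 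The loop of length $2$ at the secondary vertex $v$ reading a single nontrivial syllable $g \in G_\gamma$ forces two edges $e \ne f$ with $e_+ = f_+ = v$; if $\ph(e) \ne \ph(f)$ in $G_\gamma$ this contradicts (P2), and if $\ph(e) = \ph(f)$ then $p$ fails to be reduced at $v$ unless $s$ can be shortened, contradicting minimality. Handling the minimal-syllable-length reduction and the case $|s| = 0$ (where $g$ itself bounds a reduced closed loop at $o$, again contradicting (P1)/(P2)/(P3)) is the crux; I expect this case analysis on how a conjugate of a factor element sits as a reduced loop to be the main obstacle, since one must carefully use all three irreducibility conditions and the bipartite structure.

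\textbf{Computation of the reduced rank.} Once $H$ is known to be factor-free it is free, so $\bar\rr(H) = \max(\rr(H) - 1, 0)$ is well defined. I would construct from $\Psi_o$ an ordinary (non-bipartite) graph $\Psi_o'$ by collapsing each secondary vertex: concretely, replace each secondary vertex $v$ together with its incident edges by direct edges among the primary neighbours of $v$, labelled by the corresponding products in $G_{\btt(v)}$; this does not change the set of loop labels at $o$ up to equality in $\FF$, so $\pi_1(\Psi_o', o) \cong H$ via the labelling map (here one uses that $\Psi_o$ is irreducible, so distinct reduced loops give distinct elements of $\FF$ — this is exactly the content of Lemma~\ref{Lm1} applied in reverse). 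Since $\Psi_o$ is connected and finite, $\Psi_o'$ is a finite connected graph, and $\pi_1$ of a finite connected graph is free of rank $1 - \chi(\Psi_o')$. Finally I would check the Euler characteristic is preserved: collapsing a secondary vertex $v$ of degree $\deg v = \delta$ removes one vertex and $\delta$ geometric edges while adding $\binom{\delta}{2}$-many (or fewer, after folding) geometric edges; the bookkeeping is arranged precisely so that $\chi(\Psi_o') = \chi(\Psi_o) = |V\Psi_o| - \tfrac12|E\Psi_o|$ (recalling $E\Psi_o$ counts oriented edges). Hence $\rr(H) = 1 - \chi(\Psi_o)$ and $\bar\rr(H) = -\chi(\Psi_o)$, using that $\Psi_o$ being a core graph with a loop (or the degenerate degree-$1$ base case) keeps $\chi \le 0$. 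The only delicate point in this half is ensuring the collapse-and-fold step genuinely realizes $H$ as $\pi_1$ without identifying distinct elements, which again reduces to the irreducibility hypotheses already in force.
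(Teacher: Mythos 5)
There are two genuine problems with your proposal.

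First, in the factor-freeness argument your case analysis inverts property (P2). Having two distinct edges $e \ne f$ with $e_+ = f_+$ and $\ph(e) \ne \ph(f)$ in $G_\gamma$ is exactly the configuration that (P2) \emph{permits} — what (P2) forbids is $\ph(e)=\ph(f)$ for distinct edges into a common secondary vertex — so the case you label as "contradicts (P2)" produces no contradiction, and the case $\ph(e)=\ph(f)$ forces $g=1$ rather than a failure of reducedness. The configuration that actually rules out a reduced closed subpath of length $2$ reading the nontrivial syllable $g=\ph(e)\ph(f)^{-1}$ is that $e$ and $f$ must also share their \emph{initial} primary vertex (the path enters and leaves the secondary vertex through the endpoint of the segment spelling $s$), so $e,f$ are multiple edges, forbidden by (P3). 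A cleaner route, closer to what the paper intends: every nontrivial element of $H$ is conjugate to $\ph(q)$ for a cyclically reduced closed path $q$; by (P1)--(P2) the word $\ph(q)$ is cyclically reduced of syllable length $|q|/2$, and by (P3) one has $|q|\ge 4$, so $\ph(q)$ has syllable length $\ge 2$ and is not conjugate into any factor.

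Second, and more seriously, the collapse you propose for the rank computation does not do what you claim. Replacing a secondary vertex of degree $\delta$ together with its $\delta$ incident geometric edges by $\delta(\delta-1)/2$ edges among its primary neighbours changes the Euler characteristic by $\delta-1-\delta(\delta-1)/2$, which is already nonzero for $\delta=3$, and it also enlarges the fundamental group (a complete graph on the neighbours introduces spurious loops); no folding "bookkeeping" rescues the construction as described — you would need a spanning tree on the $\delta$ neighbours ($\delta-1$ edges), not a complete graph. But the whole detour is unnecessary, and the paper avoids it: $\Psi_o$ is already a $1$-complex, so $\pi_1(\Psi_o,o)$ is free of rank $1-\chi(\Psi_o)$ with $\chi(\Psi_o)=|V\Psi_o|-\tfrac12|E\Psi_o|$ (the factor $\tfrac12$ because $E\Psi_o$ counts oriented edges), and the labeling homomorphism $p\mapsto\ph(p)$ from $\pi_1(\Psi_o,o)$ to $\FF$ is injective by (P1)--(P2), so $H$ is free of that rank. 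Property (P3) guarantees $\Psi_o$ is not a tree, hence $\chi(\Psi_o)\le 0$ and $\bar\rr(H)=\rr(H)-1=-\chi(\Psi_o)$.
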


\begin{proof} This follows from  the facts that the fundamental group $\pi_1(\Psi_o, o)$ of
$\Psi_o$ at $o$ is free of rank $- \chi(\Psi_o)+1$ and that the homomorphism $\pi_1(\Psi_o, o) \to \FF$, given by $p \to \ph(p) $, where $p$ is a path with $p_-=p_+= o$, has the trivial kernel in view of properties (P1)--(P2).
\end{proof}

Suppose $H$ is a nontrivial finitely generated factor-free subgroup
of a free product $\FF = \prod_{\alpha \in I}^* G_\alpha$, and $\Psi_o = \Psi_o(H)$ is a  finite irreducible  $\A$-graph for $H$ as in Lemma~\ref{Lm1}. We  say that  $\Psi_o(H)$ is an {\em irreducible graph} of $H$.
\smallskip

Let $\Psi(H) := \core(\Psi_o(H))$ denote the core of an irreducible graph $\Psi_o(H)$ of $H$.
Clearly, $\Psi(H)$
has no vertices of degree $\le 1$ and $\Psi(H)$ is also an irreducible $\A$-graph.
We  say that  $\Psi(H)$ is an {\em irreducible core graph} of $H$.
\smallskip

It is easy to see that an irreducible  graph  $\Psi_o(H)$ of $H$ can be obtained back from an irreducible  core graph $\Psi(H)$  of $H$ by attaching a suitable path $p$ to  $\Psi(H)$ so that $p$ starts at a primary vertex $o$, ends in  $p_+ \in V_P\Psi(H)$, and then by doing foldings of edges as in the proof of Lemma~\ref{Lm1}, see Figure~2.1.
\begin{center}
\usetikzlibrary{arrows}
\begin{tikzpicture}[scale=.92]
\node at (-2,0) {$\Psi_o(H)$};
\node at (0,0.8) {$\Psi(H)=$};
\node at (0,0.3) {$ \mbox{core}\Psi_o(H)$};
\draw  (0,-0.7)[fill = black]circle (0.05);
\draw  (0,.5) ellipse (1.02 and 1.2);
\draw  plot[smooth, tension=.8] coordinates
{(0,-0.7)(-0.5,-1.5) (-1,-1) (-1.5,-1.5) (-2,-1.2) (-2.5,-1.5) (-2.8,-1.5)};
\draw [-latex]  (-1.269,-1.26) -- (-1.21,-1.16);
\draw  (-2.8,-1.5)[fill = black]circle (0.05);
\node at (-1.5,-1.1) {$p$};
\node at (-2.8,-1.1) {$o$};
\node at (.0,-2.2) {Figure~2.1};
\end{tikzpicture}
\end{center}

Now suppose  $H_1$, $H_2$  are  nontrivial finitely generated factor-free subgroups of $\FF$.
Consider a set $S(H_1, H_2)$ of representatives of those double cosets
$H_1 t H_2$ of $\FF$, $t \in \FF$,   that have the property $H_1 \cap t H_2 t^{-1} \ne  \{ 1 \}$.
For every $s \in S(H_1, H_2)$,  define the subgroup $K_s := H_1 \cap s H_2 s^{-1}$.
Similarly to articles \cite{Iv99}, \cite{Iv01},  \cite{Iv08}, \cite{Iv10}, \cite{Iv12} and analogously to  the case of free groups, see  \cite{D}, \cite{N2}, we now construct a finite irreducible  $\A$-graph $\Psi(H_1, H_2)$, also denoted $\core(\Psi(H_1) \times \Psi(H_2))$,  whose connected components are irreducible  core graphs $\Psi(K_s)$, $s \in S(H_1, H_2)$.
\smallskip

First we define an $\A$-graph $\Psi_o'(H_1, H_2)$. The set  of primary vertices of
$\Psi_o'(H_1, H_2)$ is
$V_P \Psi_o'(H_1, H_2)   := V_P \Psi_{o_1}(H_1)\times V_P \Psi_{o_2}(H_2)$. Let
$$
\tau_i : V_P \Psi_o'(H_1, H_2) \to V_P \Psi_{o_i}(H_i)
$$
denote the projection map, $\tau_i((v_1, v_2)) = v_i$, $i=1,2$.
\smallskip

The set of secondary vertices $V_S \Psi_o'(H_1, H_2)$ of
$\Psi_o'(H_1, H_2)$ consists of equivalence classes $[u]_\al$, where  $u \in V_P \Psi_o'(H_1, H_2)$, $\al \in I$, with respect to  the minimal  equivalence relation generated by   the following relation   $\overset \al \sim$ on the set  $V_P \Psi_o'(H_1, H_2)$.  Define $v \overset \al \sim w$ if and only if there are edges $e_i, f_i \in E \Psi_{o_i}(H_i)$ such that $$
(e_i)_- = \tau_i(v) , \  (f_i)_- = \tau_i(w) ,  \ (e_i)_+ = (f_i)_+
$$
for each  $i=1,2$, the edges  $e_i, f_i$ have type $\al$, and $\ph(e_1) \ph(f_1)^{-1} = \ph(e_2) \ph(f_2)^{-1}$ in $G_\al$. It is easy to see that the relation  $\overset \al \sim$  is symmetric and transitive on pairs and triples of distinct elements (but it could lack the reflexive property).
\smallskip

The edges in $\Psi_o'(H_1, H_2)$  are defined so that the vertices
$$
u \in  V_P \Psi_o'(H_1, H_2) \quad  \mbox{ and}  \quad
[v]_\al \in V_S \Psi_o'(H_1, H_2)
$$
are connected by an edge if and only if $u  \in [v]_\al$.
\smallskip

The type $\btt([v]_\al)$  of a vertex $[v]_\al \in V_S \Psi_o'(H_1, H_2)$  is $\al$ and if
$$
e \in E\Psi_o'(H_1, H_2) , \quad    e_- =u ,  \quad   e_+ = [v]_\al ,
$$
then $\ph(e) :=\ph(e_1)$, where $e_1  \in E\Psi_{o_1}(H_1)$
is an edge of type $\al$ with $(e_1)_- = \tau_1(u)$, when such an $e_1$ exists, and $\ph(e_1) :=g_\al $, where $g_\al \in G_\al$, $g_\al \ne 1$,    otherwise.
\smallskip

It follows from the definitions and properties (P1)--(P2) of $\Psi_{o_i}(H_i)$, $i=1,2$, that $\Psi_o'(H_1, H_2)$   is an  $\A$-graph with properties (P1)--(P2). Hence, taking the core of
$\Psi_o'(H_1, H_2)$, we obtain a  finite  irreducible  $\A$-graph which we  denote by $\Psi(H_1, H_2)$ or by $\core(\Psi(H_1) \times \Psi(H_2))$.
\smallskip

It is not difficult to see that, when taking the connected component
$$\Psi_o'(H_1, H_2, o)$$
of $\Psi_o'(H_1, H_2)$  that contains the vertex $o = (o_1, o_2)$ and inductively removing  from $\Psi_o'(H_1, H_2, o)$  the vertices of degree 1 different from $o$, we will obtain an
irreducible  $\A$-graph $\Psi_o(H_1\cap H_2)$ with the base vertex $o$ that  corresponds to the intersection $H_1\cap H_2$ as in Lemma \ref{Lm1}.
\smallskip

It follows from the definitions and property (P1) for  $\Psi(H_i)$, $i=1,2$,   that, for every edge $e \in E \Psi(H_1, H_2)$ with $e_- \in V_P \Psi(H_1, H_2)$, there are unique edges
$e_i  \in E\Psi(H_i)$ such that  $\tau_i(e_-) =(e_i)_-$, $i=1,2$. Hence, by setting  $\tau_i(e) =e_i$, $\tau_i(e_+) =(e_i)_+$, $i=1,2$,   we extend  $\tau_i$ to the graph map
\begin{gather}\label{tau1}
\tau_i :  \Psi(H_1, H_2) \to  \Psi(H_i)  , \quad i=1,2 \, .
\end{gather}
It follows from the definitions that $\tau_i$ is locally injective and $\tau_i$ preserves  syllables of the word $\ph(p)$ for every path $p$ with primary vertices $p_-, p_+$.

\begin{lem}\label{Lm3} Suppose $H_1$, $H_2$ are  finitely generated factor-free
subgroups of the free product $\FF$ and
$S(H_1, H_2) \ne  \varnothing$.  Then the connected components of the graph  $\Psi(H_1, H_2)$ are core graphs  $\Psi(H_1 \cap s H_2 s^{-1})$ of subgroups $H_1 \cap s H_2 s^{-1}$, $s \in S(H_1, H_2)$. In particular,
$$
\bar \rr(H_1, H_2) :=
\sum_{s \in S(H_1, H_2) } \bar \rr(H_1 \cap s  H_2 s^{-1}) =
-\chi(\Psi(H_1, H_2) ) .
$$
\end{lem}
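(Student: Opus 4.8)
The plan is to identify, up to conjugacy, the factor-free subgroup carried by each connected component of $\Psi(H_1,H_2)$, to match the components bijectively with $S(H_1,H_2)$, and then to add up Euler characteristics. This parallels the classical Stallings fiber-product argument for free groups (cf.\ \cite{D}, \cite{N2}) and the constructions of \cite{Iv99}--\cite{Iv12}; the new feature is the bookkeeping forced by the free product structure, in which the relation $\overset{\al}{\sim}$ plays the role of matching letters inside the factors $G_\al$.

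First I would record that $\Psi(H_1,H_2)=\core(\Psi_o'(H_1,H_2))$ is a finite irreducible $\A$-graph with no vertex of degree $\le 1$, and, being bipartite with every edge joining a primary and a secondary vertex, each connected component $C$ contains a primary vertex $o_C$; by Lemma~\ref{Lm2}, $C=\Psi(H_C)$ for the factor-free subgroup $H_C:=H(C,o_C)$ and $\bar\rr(H_C)=-\chi(C)$. Since $\chi$ is additive over components, it suffices to produce a bijection $C\mapsto s(C)$ from the components of $\Psi(H_1,H_2)$ onto $S(H_1,H_2)$ with $H_C$ conjugate in $\FF$ to $K_{s(C)}$; then, $\bar\rr$ being conjugation invariant,
$$
-\chi(\Psi(H_1,H_2))=\sum_C(-\chi(C))=\sum_C\bar\rr(H_C)=\sum_{s\in S(H_1,H_2)}\bar\rr(K_s)=\bar\rr(H_1,H_2).
$$

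To identify $H_C$, I would write $o_C=(\bar o_1,\bar o_2)$, pick paths $r_i$ from $o_i$ to $\bar o_i$ in $\Psi_{o_i}(H_i)$, put $g_i:=\ph(r_i)$ and $s_C:=g_1g_2^{-1}$, and prove the fiber-product identity $H_C=g_1^{-1}H_1g_1\cap g_2^{-1}H_2g_2$, which after conjugation by $g_1$ makes $H_C$ conjugate to $K_{s_C}=H_1\cap s_CH_2s_C^{-1}$. The inclusion ``$\subseteq$'' is immediate from the maps $\tau_i$ of \eqref{tau1}: a closed reduced path $p$ at $o_C$ maps to a closed path $\tau_i(p)$ at $\bar o_i$ in $\Psi_{o_i}(H_i)$ with $\ph(\tau_i(p))\overset0=\ph(p)$ in $\FF$ (local injectivity and syllable preservation of $\tau_i$ make the three words represent the same element), so that $\ph(p)\in g_i^{-1}H_ig_i$ for $i=1,2$ by Lemma~\ref{Lm1}. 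For ``$\supseteq$'', given a reduced word $W$ with $g_iWg_i^{-1}\in H_i$, I would realize each $g_iWg_i^{-1}$ by a reduced closed path of length $2|g_iWg_i^{-1}|$ at $o_i$ in $\Psi_{o_i}(H_i)$, trim the initial $r_i$ and terminal $r_i^{-1}$ by folding/reduction to obtain reduced paths $q_i$ from $\bar o_i$ to $\bar o_i$ with $\ph(q_i)\overset0=W$, and glue $q_1$ and $q_2$ into one closed path at $o_C$ in $\Psi_o'(H_1,H_2)$: reading $W$ letter by letter, each pair of corresponding edges of $q_1,q_2$ determines an edge of $\Psi_o'(H_1,H_2)$, and the definition of $\overset{\al}{\sim}$ is exactly the condition making consecutive such edges meet at the right secondary vertex; the resulting loop has nonempty label $W$, hence survives into the core, i.e.\ into $C$, so $W\in H_C$. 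Finally, since $C$ carries a cyclically reduced path, $H_C\ne\{1\}$, so $K_{s_C}\ne\{1\}$ and $s_C$ determines a well-defined element $s(C)\in S(H_1,H_2)$.

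It then remains to check that $C\mapsto s(C)$ is a bijection. For well-definedness: replacing $r_i$ by another path $o_i\to\bar o_i$ multiplies $g_i$ on the left by an element of $H_i$, leaving the double coset $H_1s_CH_2$ fixed, while moving $o_C$ to another primary vertex of $C$ along a path $c$ replaces $g_i$ by $g_i\ph(\tau_i(c))$ and, since $\ph(\tau_1(c))\overset0=\ph(\tau_2(c))$ in $\FF$, does not even change $s_C$. Surjectivity and injectivity would then follow from the same glueing construction (read ``in reverse'' for surjectivity, starting from a nontrivial element of $K_s$, the reduced closed path in $\Psi_{o_2}(H_2)$ it labels, and the path of label $\overset0=s^{-1}$ leading to it), exactly as in the free-group double-coset/covering-space correspondence; the component with $s(C)=1$ is the graph $\Psi_o(H_1\cap H_2)$ already described before the lemma. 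I expect the main obstacle to be this glueing step: verifying rigorously that two letter-compatible closed paths in $\Psi_{o_1}(H_1)$ and $\Psi_{o_2}(H_2)$ assemble to a genuine closed path in $\Psi_o'(H_1,H_2)$, that the chain of secondary-vertex identifications met along the way is precisely the one generated by $\overset{\al}{\sim}$, and that the length bookkeeping (keeping $|p|=2|\ph(p)|$) together with the trimming of the $r_i$ survives the cancellations that arise when the relevant words are not in general position.
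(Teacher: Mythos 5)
Your proposal is correct and is essentially the argument the paper has in mind: the paper itself gives no details (it declares the lemma straightforward and defers to \cite{Iv12}), and what you outline — identifying each component of $\core(\Psi(H_1)\times\Psi(H_2))$ with a double coset via the base-point coordinates $(\bar o_1,\bar o_2)$, proving $H_C=g_1^{-1}H_1g_1\cap g_2^{-1}H_2g_2$ by the two projections $\tau_i$ and the gluing of letter-compatible paths, and summing Euler characteristics over components — is exactly the standard fiber-product/double-coset correspondence being invoked. The one step you flag as delicate (assembling two compatible closed paths into a path of $\Psi_o'(H_1,H_2)$ via the relation $\overset{\al}{\sim}$) is indeed where the work lies, but your reduction of the lemma to that step is sound.
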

\begin{proof} This is straightforward, details can be found in \cite{Iv12}.
\end{proof}

\section{The System of Linear Inequalities $\SLI[Y_1]$}

In this Section, we  let $\FF_2 = G_1 * G_2$ be the free product of two nontrivial  groups  $G_1, G_2$, let
$\A := G_1 \cup G_2$ be the alphabet,  $G_1 \cap G_2 = \{ 1\}$,  and let $Y_1$ be a finite connected
irreducible  $\A$-graph such that  $\core(Y_1) = Y_1$ and $\brr(Y_1) := -\chi(Y_1) >0$.
In particular, $Y_1$ has no vertices of degree 1 and $Y_1$ contains a vertex of degree $>2$.
\smallskip

Let $S_2(G_\al)$, where $\al =1,2$, denote the set of all finite subsets of $G_\al$ of cardinality $\ge 2$ and let $S_1(V_P Y_1)$ denote the set of all nonempty subsets of $V_P Y_1$.
For a set  $T \in S_2(G_\al)$, consider  a function
$$
\om_T : T \to S_1(V_P Y_1) .
$$

We also consider a relation  ${\sim}_{\om_T}$  on the  set of all pairs $(a, u)$, where $a \in T$ and $u \in \om_T(a)$, defined as follows.  Two pairs $(a, u)$, $(b, v)$ are related by ${\sim}_{\om_T}$,  written $(a, u) {\sim}_{\om_T} (b, v)$, if and only if the following holds. Either $(a, u) = (b, v)$ or, otherwise,
there exist edges $e, f \in E Y_1$ with the properties that $e_- = u$,  $f_- = v$, the secondary vertex $e_+ = f_+$ has type $\al$, and $\ph(e)\ph(f)^{-1} = ab^{-1}$  in $G_\al$, see an example depicted in Figure~3.1.
It is easy to see that the relation  ${\sim}_{\om_T}$  is an equivalence one.

\begin{center}
\begin{tikzpicture}[scale=.62]
\draw  (-2,2) [fill = black]circle (0.05);
\draw  (-2,3) node (v1) {} [fill = black]circle (0.05);
\draw  (-2,4) [fill = black]circle (0.05);
\draw  (-2.3,3) ellipse (1.1 and 1.8);

\draw  (0,6) [fill = black]circle (0.05);
\draw  (1,6.) node (v5) {} [fill = black]circle (0.05);
\draw  (2,6) [fill = black]circle (0.05);
\draw  (1,6.3) ellipse (1.8 and 1.1);

\draw  (4,2) [fill = black]circle (0.05);
\draw  (4,3) node (v3) {} [fill = black]circle (0.05);
\draw  (4,4) [fill = black]circle (0.05);
\draw  (4.3,3) ellipse (1.1 and 1.8);

\draw  (0,0) [fill = black]circle (0.05);
\draw  (1,0) node (v4) {} [fill = black]circle (0.05);
\draw  (2,0) [fill = black]circle (0.055);
\draw  (1,-.3) ellipse (1.8 and 1.1);

\draw  (1,3) node (v2) {} circle (0.08);
\draw [-latex]  (-.6,3) -- (-.4,3);
\draw [-latex]  (2.6,3) -- (2.4,3);
\draw [-latex]  (1,1.4) -- (1,1.6);
\draw [-latex]  (1,4.6) -- (1,4.4);
\draw  (v1) edge (v2);
\draw  (v2) edge (v3);
\draw  (v2) edge (v4);
\draw  (v5) edge (v2);
\node at (-2.5,2) {$u_2$};
\node at (-2.5,3) {$u_5$};
\node at (-2.5,4) {$u_8$};
\node at (0,6.5) {$u_1$};
\node at (1,6.5) {$u_2$};
\node at (2,6.5) {$u_3$};
\node at (4.5,4) {$u_3$};
\node at (4.5,3) {$u_4$};
\node at (4.5,2) {$u_5$};
\node at (2,-0.5) {$u_1$};
\node at (1,-0.5) {$u_6$};
\node at (0,-0.5) {$u_7$};
\node at (1.5,4.5) {$e_1$};
\node at (2.5,2.5) {$e_2$};
\node at (0.5,1.5) {$e_3$};
\node at (-0.5,3.5) {$e_4$};
\node at (-9,6) {$T  = \{ g_1, g_2, g_3, g_4 \} \subseteq G_\alpha,$ };
\node at (-9,5) {$ \Omega_T(g_1) = \{ u_1, u_2, u_3 \},  $ };
\node at (-9,4) {$\Omega_T(g_2) = \{ u_3, u_4, u_5 \}$, };
\node at (-9,3) {$\Omega_T(g_3) = \{ u_1, u_6, u_7 \} $,  };
\node at (-9,2) {$\Omega_T(g_4) = \{ u_2, u_5, u_8 \} $,  };
\node at (-8.4,1) {$\Omega_T(g_i)  \subseteq V_PY_1, \ \varphi(e_i) = g_i,\  i =1,2,3,4$,  };
\node at (-8,0) {$(g_1, u_2) \sim_{\Omega_T} (g_2, u_4) , \  \
   (g_3, u_6) \sim_{\Omega_T} (g_4, u_5) $.  };

\node at (4.2,6.4) {$ \Omega_T(g_1) $};
\node at (4.5,0.5) { $\Omega_T(g_2)$ };
\node at (3.7,-1.24) {$ \Omega_T(g_3) $};
\node at (-2.3,5.5) {$ \Omega_T(g_4) $};
\node at (-3.5,-2.2) {Figure~3.1};
\end{tikzpicture}
\end{center}

Let
$[(a, u)]_{ {\sim}_{\om_T} }$ denote the equivalence class of $(a, u)$ and let
$$
| [(a, u)]_{ {\sim}_{\om_T} } |
$$
denote the cardinality of $[(a, u)]_{ {\sim}_{\om_T} }$.  It follows from the definitions that
\begin{gather}\label{cd1}
1 \le  | [(a, u)]_{ {\sim}_{\om_T} } | \le |T|   .
\end{gather}
We will say that  the equivalence class  $[(a, u)]_{ {\sim}_{\om_T} }$ is {\em associated} with a secondary vertex $w \in V_S Y_1$  of type $\al$ if $w = e_+$, where  $e \in E Y_1$ and  $e_- = u$. It is easy to see that the definition of the secondary vertex $w$ is independent of the primary vertex  $u$ in  $[(a, u)]_{ {\sim}_{\om_T} }$.
\smallskip

A function $\om_T : T \to S_1(V_P Y_1)$, $T \in S_2(G_\al)$, is called
{\em $\al$-admissible} if
$$
|[ (a, u)  ]_{{\sim}_{\om_T}} | \ge 2
$$
for every equivalence class $[(a, u)]_{{\sim}_{\om_T}}$, where $a \in T$, $u \in \om_T(a)$. The set of all $\al$-admissible functions is denoted $\Omega(Y_1, \al)$, $\al =1,2$.
\smallskip

Let $\om_T \in  \Omega(Y_1, \al)$ be an $\al$-admissible  function, $T \in S_2(G_\al)$,  and let
\begin{equation}\label{Nal}
N_\al ( \om_T ) := \sum ( |  [  (a, u) ]_{{\sim}_{\om_T}} | -2 )
\end{equation}
denote the sum of cardinalities minus two over all equivalence classes $[(a, u) ]_{{\sim}_{\om_T}}$, where  $a \in T$ and $u \in \om_T(a)$, of the equivalence relation ${\sim}_{\om_T}$.
\smallskip

Let $r$ be the number of all equivalence classes $[(a, u)]_{{\sim}_{\om_T}}$ of the  equivalence  relation ${\sim}_{\om_T}$,  where $a \in T$ and $u \in \om_T(a)$. If $r \ge |V_P Y_1 |$, then it follows from \eqref{Nal} and definitions that
\begin{equation*}
N_\al ( \om_T ) = \sum ( |  [  (a, u) ]_{{\sim}_{\om_T}} | -2 )  \le
     |T|\cdot |V_P Y_1 | - 2r \le (|T| - 2) |V_P Y_1 | .
\end{equation*}
On the other hand, if $r \le |V_P Y_1 |$, then it follows from \eqref{cd1} and \eqref{Nal} that
\begin{equation*}
N_\al ( \om_T ) = \sum ( |  [  (a, u) ]_{{\sim}_{\om_T}} | -2 )  \le
   (|T| -2 ) r   \le (|T| - 2) |V_P Y_1 |  .
\end{equation*}
Thus, in any case, it is shown that
\begin{equation}\label{cd2}
N_\al ( \om_T )\le (|T| - 2) |V_P Y_1 |  .
\end{equation}

For every set $A \in S_1(V_P Y_1)$,
we consider a variable $x_A$. We also introduce a special variable $x_s$. Now we will define a system of linear inequalities in these variables.
\smallskip

For every $\al$-admissible function $\Omega_T$,   where $ T \in S_2(G_\al)$ and $\al =1,2$,  we denote $T = \{ b_1, \dots, b_{k} \}$ and we set $A_i := \om_T(b_i)$, $i=1, \dots, k$.
\smallskip

If $\al =1$, then the inequality,  corresponding to the  $\al$-admissible  function $\Omega_T$, is defined as follows.
\begin{equation}\label{inqa}
- x_{A_1} -\dots  - x_{A_{k}}  - (k-2)x_s \le   - N_1(\Omega_T)   .
\end{equation}

If $\al =2$, then the inequality  corresponding to the  $\al$-admissible  function $\Omega_T$ is defined as follows.
\begin{equation}\label{inqb}
 x_{A_1} +\dots  + x_{A_{k}}  - (k-2)x_s \le   - N_2(\Omega_T)   .
\end{equation}

Let
\begin{equation}\label{slio}
\SLI[Y_1]
\end{equation}
denote  the system of linear inequalities \eqref{inqa}--\eqref{inqb} over all $\al$-admissible functions $\Omega_T$, where $\Omega_T \in \Omega(Y_1, \al)$ and  $\al =1,2$. Since the set $S_2(G_\al)$ is in general infinite (unless $G_\al$ is finite) and the set $S_1(V_P Y_1)$  is finite  (because $Y_1$ is finite),  it follows that  $\SLI[Y_1]$ is an infinite system  of linear inequalities with integer coefficients over a finite set of variables $x_A, A \in S_1(V_P Y_1)$, $x_s$.
\smallskip

Let $d \ge 3$ be an integer and   let
\begin{equation}\label{slid}
\SLI_d[Y_1]
\end{equation}
denote the subsystem of the system     \eqref{slio}  whose  linear inequalities \eqref{inqa}--\eqref{inqb} are defined for all $\al$-admissible functions $\Omega_T$,  where $\Omega_T \in \Omega(Y_1, \al)$ and  $\al =1,2$, such that $|T| \le d$.
\smallskip

If $q$ is an inequality of $\SLI_d[Y_1]$
then the coefficient of $x_s$ in the  left hand side of $q$ is the integer $-k+2$, where
$$
2 \le k = k(q) = |T| \le d ,
$$
and the right hand side of $q$ is the  integer $-N_\al(\om_T)$, where
$$
0 \le N_\al(\om_T) \le (d-2) | V_P Y_1 | ,
$$
as follows from inequality \eqref{cd2}. Also, the number of subsets $A \subseteq V_P Y_1$ that  index variables $\pm  x_A$  in the  left hand side of $q$  is finite and the total number of occurrences of such variables $\pm  x_A$
in $q$ is $k = |T| \le d$.
Therefore, $\SLI_d[Y_1]$ is a finite system of linear inequalities and
$$
\SLI[Y_1] = \bigcup_{d=3}^{\infty}  \SLI_d[Y_1] .
$$

Consider the following property of a graph $Y_2$ (which need not be connected).

\begin{enumerate}
\item[(B)] \  $Y_2$ is a finite irreducible   $\A$-graph, the map $\tau_2  :  \core (Y_1  \times  Y_2) \to Y_2$ is surjective, $\core(Y_2) = Y_2$,      and  $ \brr(Y_2) := - \chi(Y_2) > 0$.
     \end{enumerate}

For example, $Y_1$ has property (B).
\smallskip

If $\Gamma$ is a finite graph, let $\deg \Gamma$ denote the maximum
of degrees of vertices of $\Gamma$. Recall that the degree of a vertex $v \in V \Gamma$ is the number of edges $e \in E \Gamma$ such that  $e_+ = v$.  For later references, we  introduce one more  property of a graph $Y_2$.

\begin{enumerate}
\item[(Bd)] \  $Y_2$ has property (B) and $\deg Y_2 \le d$, where $d \ge 3$ is an integer.
\end{enumerate}

Suppose $Y_2$ is a graph with property (B).
For a secondary vertex $u \in V_S Y_2$ of type $\al$,  we consider
all edges $e_1, \dots, e_\ell$, where $\deg u = \ell$, such that
$$
u = (e_1)_+=  \dots = (e_\ell)_+
$$
and denote   $v_j := (e_j)_-$, $j =1, \dots, \ell$.
Define
$$
T_u := \{ \ph(e_1), \dots , \ph(e_\ell) \} .
$$
Clearly,  $T_u \subseteq S_2(G_\al)$.
For every $j = 1, \dots, \ell$,  let $\tau_2^{-1}(v_j)$ denote the full preimage of the vertex $v_j$ in
$\core( Y_1 \times Y_2)$.
Define  the sets
\begin{equation}\label{defA}
A_j(u) := \tau_1  \tau_2^{-1}(v_j) \subseteq V_P Y_1
\end{equation}
for $j=1, \dots, \ell$ and consider the function
\begin{equation}\label{STu}
\om_{T_u} : T_u \to S_1(V_P Y_1)
\end{equation}
so that $\om_{T_u}(\ph(e_j)) := A_j(u)$.
\smallskip

It is easy to check that  $\Omega_{T_u}$ is $\al$-admissible.
Since every  $\al$-admissible function $\Omega \in \Omega(Y_1, \al)$
gives rise to  an inequality \eqref{inqa}  if $\al =1$ or  to an inequality \eqref{inqb}  if $\al=2 $ and every
secondary vertex $u \in V_S Y_2$  of type $\al$  defines, as indicated above, an $\al$-admissible  function  $\Omega_{T_u}$, it follows that every  $u \in V_S Y_2$ is mapped to a certain inequality
of the system  $\SLI[Y_1]$, denoted $\inq_S(u)$.  Thus we obtain a function
\begin{equation}\label{inqSu}
\inq_S : V_S Y_2 \to \SLI[Y_1]
\end{equation}
defined from the set $V_S Y_2$ of secondary vertices of a finite irreducible $\A$-graph  $Y_2$ with property (B)
to the set of inequalities of $\SLI[Y_1]$.
\smallskip

If $q$ is an inequality of the system $\SLI[Y_1]$, denoted  $q \in \SLI[Y_1]$, we let $q^L$ denote the left hand side of $q$, let  $q^R$ denote the integer in the right hand side of $q$ and  let $k(q) \ge 2$ denote the parameter $k$ for $q$,  see the definition of inequalities \eqref{inqa}--\eqref{inqb}.

\begin{lem}\label{lem1}
Suppose  $Y_2$ is a finite irreducible   $\A$-graph such that  the map $\tau_2  :  \core (Y_1  \times  Y_2) \to Y_2$ is surjective, $\core(Y_2) = Y_2$ and $\deg Y_2 \le d$.  Then $\inq_S(V_S Y_2) \subseteq  \SLI_d[Y_1]$. Furthermore,
 \begin{align*}
\sum_{u \in V_S Y_2}  \inq_S(u)^L & = -2 \brr (Y_2) x_s ,  \\
\sum_{u \in  V_S Y_2}  \inq_S(u)^R & = -2 \brr ( \core(Y_1 \times  Y_2) )   .
\end{align*}
\end{lem}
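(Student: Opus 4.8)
The plan is to reduce both displayed identities to a single structural fact about irreducible core $\A$-graphs over a free product of \emph{two} factors — every primary vertex has degree exactly $2$, with one incident edge of each type — and then to run two short Euler-characteristic computations, one for the left-hand sides of the inequalities $\inq_S(u)$ and one for their right-hand sides. The containment $\inq_S(V_S Y_2)\subseteq\SLI_d[Y_1]$ will drop out along the way.

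First I would record the structural fact. Since $\core(Y_2)=Y_2$, the graph $Y_2$ has no vertex of degree $\le 1$; and if $e,f\in E Y_2$ share a primary initial vertex, then $e=f$ when $e_+=f_+$ (no multiple edges in $Y_2$, by (P3)) while $\btt(e)\ne\btt(f)$ when $e_+\ne f_+$ (by (P1)) — there being only the two types, this forces $\deg v=2$ for every $v\in V_P Y_2$, with one incident edge of type $1$ and one of type $2$. The same reasoning applies to $Z:=\core(Y_1\times Y_2)$, again a finite irreducible $\A$-graph with no vertex of degree $\le 1$. As both graphs are bipartite, $\sum_{v\in V_P Y_2}\deg v=\tfrac12|E Y_2|=\sum_{u\in V_S Y_2}\deg u$ and similarly for $Z$; the degree-$2$ fact then gives $\tfrac12|E Y_2|=2|V_P Y_2|$, hence $\brr(Y_2)=-\chi(Y_2)=|V_P Y_2|-|V_S Y_2|$ and $\sum_{u\in V_S Y_2}(\deg u-2)=\tfrac12|E Y_2|-2|V_S Y_2|=2\brr(Y_2)$, and by the same token $\sum_{w\in V_S Z}(\deg w-2)=2\brr(Z)$.

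Next I would treat the left-hand sides. For $u\in V_S Y_2$ of type $\al$ with incident edges $e_1,\dots,e_\ell$ (so $\ell=\deg_{Y_2}u$) and $v_j:=(e_j)_-$, the function $\om_{T_u}$ has $|T_u|=\ell\le\deg Y_2\le d$ — which already yields $\inq_S(u)\in\SLI_d[Y_1]$ — and formulas \eqref{inqa}--\eqref{inqb} show that $\inq_S(u)^L$ equals $\pm\sum_{j=1}^{\ell}x_{\tau_1\tau_2^{-1}(v_j)}-(\ell-2)x_s$, with sign $-$ if $\al=1$ and $+$ if $\al=2$. Summing over all $u\in V_S Y_2$, the coefficient of $x_s$ is $-\sum_u(\deg_{Y_2}u-2)=-2\brr(Y_2)$ by the previous step. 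For the other terms I would reindex by oriented edges: an edge $e$ whose terminal vertex is secondary of type $\al$ contributes $\pm x_{\tau_1\tau_2^{-1}(e_-)}$, and by the degree-$2$ fact the map $e\mapsto e_-$ is a bijection from the set of such edges onto $V_P Y_2$ (each primary vertex has a unique incident edge of type $\al$). Hence the type-$1$ and type-$2$ contributions are each $\pm\sum_{v\in V_P Y_2}x_{\tau_1\tau_2^{-1}(v)}$ and cancel, leaving $\sum_{u\in V_S Y_2}\inq_S(u)^L=-2\brr(Y_2)x_s$.

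The heart of the argument — and the step I expect to require genuine care — is the right-hand side identity, which I would deduce from a bijection, for each $u\in V_S Y_2$ of type $\al$, between the $\sim_{\om_{T_u}}$-classes of pairs $(a_j,p_1)$ (with $a_j:=\ph(e_j)$ and $p_1\in A_j(u)=\tau_1\tau_2^{-1}(v_j)$; the $a_j$ and the $v_j$ are pairwise distinct by irreducibility of $Y_2$) and the secondary vertices $w$ of $Z$ with $\tau_2(w)=u$, under which a class of size $m$ corresponds to a vertex of $Z$-degree $m$. The points to check are: that $p_1\in A_j(u)$ means precisely $(p_1,v_j)\in V_P Z$, which then has degree $2$ in $Z$, so its type-$\al$ edge and that edge's terminal vertex $w(a_j,p_1)\in V_S Z$ survive in $Z$ with $\tau_2(w(a_j,p_1))=u$; that two pairs $(a_i,p_1),(a_j,p_1')$ produce the same vertex of $Z$ iff $(p_1,v_i)$ and $(p_1',v_j)$ are joined to a common secondary vertex of $Z$, i.e. are $\overset{\al}{\sim}$-related in the fiber-product construction, whose ``$Y_2$-clause'' is here automatic since $e_i,e_j$ witness it with label difference $\ph(e_i)\ph(e_j)^{-1}=a_ia_j^{-1}$, so the relation reduces exactly to the defining condition of $\sim_{\om_{T_u}}$; that surjectivity holds because any $w$ over $u$ has, by degree $\ge 2$, a $Z$-neighbour $(p_1,v_j)$; and that within one class the pairs biject with the $Z$-neighbours of the corresponding $w$ by (P3). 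Granting this, $N_\al(\om_{T_u})=\sum_{w\in\tau_2^{-1}(u)\cap V_S Z}(\deg_Z w-2)$, and summing over $u\in V_S Y_2$ — which partitions $V_S Z$, since $\tau_2$ is surjective — gives $\sum_{u\in V_S Y_2}\inq_S(u)^R=-\sum_u N_{\al(u)}(\om_{T_u})=-\sum_{w\in V_S Z}(\deg_Z w-2)=-2\brr(Z)=-2\brr(\core(Y_1\times Y_2))$. The delicate point throughout is that passing to the core must not disturb the correspondence, which is exactly why $A_j(u)$ is defined via $\tau_2^{-1}$ taken in the core and why the degree-$2$ structural fact carries the weight.
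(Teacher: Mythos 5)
Your proof is correct and follows essentially the same route as the paper's: the $\pm x_A$ terms cancel in pairs indexed by the primary vertices of $Y_2$ (each of degree $2$ with one edge of each type), the $x_s$-coefficients sum to $-\sum_u(\deg u-2)=-2\brr(Y_2)$, and the right-hand sides are handled by matching the $\sim_{\om_{T_u}}$-classes with the secondary vertices of $\core(Y_1\times Y_2)$ lying over $u$ and applying the same degree-sum formula there. You have merely spelled out in detail the class-to-vertex correspondence that the paper leaves as "follows from the definition of $N_\al(\Omega_{T_u})$," and your verification of it is accurate.
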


\begin{proof}  The inclusion  $\inq_S(V_S Y_2) \subseteq  \SLI_d[Y_1]$   is evident from the definitions.
\smallskip

Suppose $v \in V_P Y_2$ and  let
$e_1$, $e_2$ be the edges such that
$(e_1)_- =  (e_2)_- =  v$  and  $u_\al := (e_\al)_+$, $\al =1,2$, is a secondary vertex of type $\al$ in  $Y_2$.
Clearly, $\ph(e_\al)  \in G_\al$ for $\al =1,2$.
Denote $A_v := \tau_1 \tau_2^{-1}(v)$. It follows from the definitions that  $A_v \in S_1( V_P Y_1)$ and that the variables   $-x_{A_v}$, $x_{A_v}$  occur in
$\inq_S(u_1)^L$, $\inq_S(u_2)^L$, resp., and will cancel out in the sum $\inq_S(u_1)^L + \inq_S(u_2)^L$. It is easy to see that  all occurrences of variables $\pm x_A$, $A \in S_1( V_P Y_1)$, in the formal sum
\begin{equation}\label{fsum2}
\sum_{u \in V_S Y_2 } \inq_S(u)^L ,
\end{equation}
before any cancellations are made, can be  paired down by using primary vertices  of $Y_2$ as indicated above.  Since every secondary vertex $u$ of $Y_2$  contributes $-(\deg u -2)$ to the coefficient
of $x_s$ in the sum  \eqref{fsum}  and
\begin{equation}\label{add1a}
\sum_{u \in V_S Y_2}  (\deg u -2) = 2 \brr (Y_2)  ,
\end{equation}
it follows that the first equality of Lemma~\ref{lem1} is true.  The second equality follows from
the analogous to \eqref{add1a} formula
\begin{equation*}
\sum_{u \in V_S (\core(Y_1 \times  Y_2))}  (\deg u -2) = 2\brr (\core(Y_1 \times  Y_2)) ,
\end{equation*}
and from the definition \eqref{Nal} of numbers $N_\al(\Omega_{T_u})$, $u \in V_S Y_2$. Here the function
$\Omega_{T_u}$ is defined for $u$ as in  \eqref{STu}.
\end{proof}

Let $A$ be a finite set. A {\em combination with repetitions} $B$ of $A$, which we denote
$$
B = [[ b_1, \dots, b_\ell ]]  \sqsubseteq  A ,
$$
is a finite unordered collection of multiple copies of elements of $A$. Hence,  $b_i \in A$ and $b_i = b_j$ is possible when $i \ne j$. If $B = [[ b_1, \dots, b_\ell ]]$ is a combination with repetitions
then the cardinality $|B|$ of  $B$ is $|B| := \ell$.
\medskip

Observe that a finite irreducible $\A$-graph $Y_2$ with  property (Bd)  can be used to construct a combination  with repetitions, denoted
$$
\inq_d(V_S Y_2),
$$
of the system   $\SLI_d[Y_1]$, whose elements are individual inequalities of $\SLI_d[Y_1]$
so that every inequality $q = \inq_S(u)$ of  $\SLI_d[Y_1]$, see \eqref{inqSu},  occurs in $\inq_d(V_S Y_2)$ as many times as the number
of preimages of $q$ in $V_SY_2$ under $\inq_S$.  Note that, in general, $\inq_d(V_S Y_2) \ne \inq_S(V_S Y_2)$ because
$\inq_S(V_S Y_2)$ is a subset of $\SLI_d[Y_1]$ while   $\inq_d(V_S Y_2)$ is a combination  with repetitions of $\SLI_d[Y_1]$.
\smallskip

It follows from Lemma~\ref{lem1} that if $\inq_d(V_S  Y_2)  = [[ q_1, \dots, q_\ell ]]$ is a combination  of $\SLI_d[Y_1]$, then
$$
\sum_{q \in  \inq_d (V_S  Y_2) }^{}q^L := \sum_{j=1}^{m} q_j^L = - 2 \brr(Y_2)  x_s  .
$$

In the opposite direction, we will prove the following.

\begin{lem}\label{lem2} Suppose $Q$ is a nonempty combination with repetitions of
$\SLI_d[Y_1]$  and
\begin{equation}\label{l2i11}
\sum_{q \in  Q}^{}q^L = -C(Q) x_s  ,
\end{equation}
where $C(Q) > 0$ is an integer. Then there exists a  finite irreducible $\A$-graph $Y_{2, Q}$ with property (Bd) such that, letting
$ \wtl Q = \inq_d(V_SY_{2, Q})$, one has  $|\wtl Q | = |Q|$ and
\begin{align}\label{l2i2}
\sum_{q \in  Q}^{}q^L & = \sum_{q \in \wtl Q}^{}q^L =     - 2\brr(Y_{2, Q}) x_s , \\  \label{l2i2a}
 \sum_{q \in  Q}^{}q^R & \ge \sum_{q \in \wtl Q}^{}q^R  =  - 2\brr(  \core(Y_1  \times  Y_{2, Q})  )    .
\end{align}
\end{lem}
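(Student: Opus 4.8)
The plan is to construct $Y_{2,Q}$ directly from the combination $Q$, essentially reversing the procedure that attaches secondary vertices to a graph via $\al$-admissible functions. First I would note that each inequality $q \in Q$ carries with it (by the definition of $\SLI_d[Y_1]$) an $\al(q)$-admissible function $\Omega_{T(q)} \in \Omega(Y_1, \al(q))$, where $T(q) = \{b_1^q, \dots, b_{k(q)}^q\} \in S_2(G_{\al(q)})$ and $\Omega_{T(q)}(b_i^q) = A_i^q \subseteq V_P Y_1$. The idea is to build $Y_{2,Q}$ so that its secondary vertices are exactly the elements of $Q$ (with multiplicity), the secondary vertex attached to $q$ having type $\al(q)$ and degree $k(q)$, and so that $\tau_1\tau_2^{-1}$ of the primary endpoints of its incident edges recovers the sets $A_i^q$. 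The primary vertices of $Y_{2,Q}$ should be indexed by the occurrences of subsets $A \in S_1(V_P Y_1)$ appearing in the formal sum $\sum_{q \in Q} q^L$: since by \eqref{l2i11} all the $\pm x_A$ cancel, these occurrences can be matched into pairs of opposite sign, one from an $\al=1$ inequality and one from an $\al=2$ inequality. Each such matched pair becomes one primary vertex $v$ of $Y_{2,Q}$, with $A_v := A$ (the common subset of $V_P Y_1$ indexing the pair), carrying two edges $e_1, e_2$ with $(e_1)_- = (e_2)_- = v$, $\btt(e_\al) = \al$, $\ph(e_\al) = $ the corresponding $b_i^q \in G_\al$, and $(e_\al)_+$ equal to the secondary vertex labelled by the inequality $q$ from which that occurrence of $\pm x_A$ came.

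Next I would check that the resulting $\A$-graph $Y_{2,Q}$ is finite (immediate, as $Q$ is finite and $V_P Y_1$ is finite), bipartite with the stated primary/secondary split, has no vertices of degree $\le 1$ (each primary vertex has degree $2$; each secondary vertex attached to $q$ has degree $k(q) \ge 2$), and has $\deg Y_{2,Q} \le d$ (since $k(q) \le d$ for $q \in \SLI_d[Y_1]$, each secondary vertex has degree $\le d$, and primary vertices have degree $2 \le d$). Properties (P1)--(P3) need to be arranged: (P1) and (P3) may initially fail and one applies the folding/cleanup of Lemma~\ref{Lm1}; the point is that foldings do not change the Euler characteristic of the quotient in the relevant way, and more importantly one must ensure foldings do not destroy the bookkeeping — here I would argue that the natural map to $Y_1$ forced by the labels keeps the graph locally injective enough that the only foldings needed are harmless, or absorb them into the construction. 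The property $\core(Y_{2,Q}) = Y_{2,Q}$ follows from the degree bound from below. For property (B) one also needs surjectivity of $\tau_2 : \core(Y_1 \times Y_{2,Q}) \to Y_{2,Q}$, and $\brr(Y_{2,Q}) > 0$; the latter follows once we verify $\sum_{u \in V_S Y_{2,Q}}(\deg u - 2) = C(Q) > 0$ via \eqref{add1a}-type Euler characteristic counting, which gives $2\brr(Y_{2,Q}) = C(Q)$ and hence the first equality in \eqref{l2i2} (the second being Lemma~\ref{lem1} applied to $Y_{2,Q}$ itself, using $\wtl Q = \inq_d(V_S Y_{2,Q})$ and $|\wtl Q| = |V_S Y_{2,Q}| = |Q|$).

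For \eqref{l2i2a} the key is that, by Lemma~\ref{lem1} applied to $Y_{2,Q}$, $\sum_{q \in \wtl Q} q^R = -2\brr(\core(Y_1 \times Y_{2,Q}))$, while $\sum_{q \in Q} q^R = -\sum_{q \in Q} N_{\al(q)}(\Omega_{T(q)})$. So the inequality to establish is $\sum_{q \in Q} N_{\al(q)}(\Omega_{T(q)}) \le \sum_{q \in \wtl Q} N_{\al(q)}(\Omega_{T_u(q)})$, i.e. that the $\al$-admissible functions actually realized by the secondary vertices $u$ of $Y_{2,Q}$ via \eqref{STu}-\eqref{defA} have $N$-values at least as large as the $N$-values of the prescribed $\Omega_{T(q)}$. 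This is where the real content lies: the sets $A_j(u) = \tau_1\tau_2^{-1}(v_j)$ computed in the actual fiber product $\core(Y_1 \times Y_{2,Q})$ may be strictly larger than the sets $A_i^q$ we used as edge-data, because gluing edges in $Y_{2,Q}$ (and subsequent foldings) can only identify more primary vertices of $Y_1$ over a given vertex of $Y_{2,Q}$, never fewer; and the quantity $N_\al$ is monotone under enlarging the values of an $\al$-admissible function in the appropriate sense (enlarging the $A_i$'s can only merge equivalence classes or increase their cardinalities, increasing $\sum(|[\cdot]| - 2)$). I expect this monotonicity argument — pinning down precisely why $\tau_1\tau_2^{-1}(v_j) \supseteq A_i^q$ and why that forces $N_{\al}(\Omega_{T_u}) \ge N_\al(\Omega_{T(q)})$, handling the interaction with foldings — to be the main obstacle, since it requires carefully tracking how the fiber product construction and the folding process act on the preimage sets; the graph-building and Euler-characteristic bookkeeping are routine by comparison with Lemmas~\ref{Lm1}--\ref{lem1}.
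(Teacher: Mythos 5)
Your construction is essentially identical to the paper's: secondary vertices indexed by the elements of $Q$, primary vertices of degree two obtained by pairing cancelling terms $\pm x_A$ in the formal sum $\sum_{q\in Q}q^L$, the identity $2\brr(Y_{2,Q})=C(Q)$ giving \eqref{l2i2}, and the key monotonicity $\Omega_{T_j}(b_{i,j})\subseteq A_i(u_j)=\tau_1\tau_2^{-1}((e_{i,j})_-)$ forcing $N_{\al}(\Omega_{T_j})\le N_{\al}(\wtl\Omega_{T_j})$, which is exactly how the paper obtains \eqref{l2i2a}. Your only superfluous worry concerns foldings: since each primary vertex of $Y_{2,Q}$ carries exactly one edge of each type and the labels at a secondary vertex are the distinct elements of $T_j$, properties (P1)--(P3) hold automatically and no folding is needed.
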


\begin{proof} We will construct an $\A$-graph $Y_{2, Q}$ whose secondary vertices $u_j$  are in bijective correspondence
$$
u_j \mapsto q_j ,
$$
where $j=1, \dots, {|Q|}$, with elements of the combination
$$
Q = [[ q_1, \dots, q_{|Q|} ]]   \sqsubseteq \SLI_d[Y_1]
$$
so that the secondary vertices of type $\al=1$ in  $ Y_{2, Q}$ correspond to the inequalities of type \eqref{inqa} in $Q$,
and   the secondary vertices of type $\al=2$ in  $Y_{2, Q}$ correspond to the inequalities of type \eqref{inqb} in $Q$.
\smallskip

To fix the notation, we
let the inequality  $q_j$ of $Q$ be defined by means of an $\al_j$-admissible function
$$
\om_{T_j} : T_j \to S_1( V_P Y_1) ,
$$
where $T_j \in S_2(G_{\al_j})$ and $T_j = \{ b_{1,j}, \dots, b_{k_j, j}\}$,
 $2 \le  k_j \le d$, $b_{i, j} \in  G_{\al_j}$. Here
$k_j = k(q_j)$ denotes the parameter $k$ for $q_j$,  see  \eqref{inqa}--\eqref{inqb}.
\smallskip

Consider a secondary vertex $u_j$ of type $\al_j$ and $k_j$ edges $e_{1,j}, \dots, e_{k_j, j}$ whose terminal vertex is $u_j$ and whose $\ph$-labels are
$$
\ph(e_{1,j}) = b_{1,j}, \ \dots , \ \ph(e_{k_j, j}) = b_{k_j, j} .
$$
This is the local structure of the graph $Y_{2, Q}$ around its secondary vertices.
\smallskip

Now we will identify in pairs the initial vertices
of the edges  $e_{1,j}, \dots, e_{k_j, j}$,  $j=1, \dots, {|Q|}$,   which will form the set of primary vertices $V_P Y_{2, Q}$ of the graph $Y_{2, Q}$.  In the notation introduced above, it follows from the definitions \eqref{inqa}--\eqref{inqb} that a typical term  $\pm x_A$ of $q_j^L$ has the form  $(-1)^{\al_j}x_A$, where $A= { \om_{T_j}(b_{i,j})}$ for some $i = 1, \dots, k_j$.
\smallskip

It follows from the equality \eqref{l2i11} that there is an involution  $\iota$ on the set of all terms $\pm x_A$  of the formal sum
\begin{equation}\label{fsum}
\sum_{j=1}^{|Q|} q_j^L
\end{equation}
such that $\iota$ takes every term  $\pm x_A$ of $q_{j_1}^L$ to a term  $\mp x_A$ of $q_{j_2}^L$, $j_1 \ne j_2$, and $\iota^2 = \mbox{id}$. Therefore, if
$$
(-1)^{\al_{j_1} }x_{ \om_{T_{j_1}} (b_{i_1,{j_1}})} \quad \text{and} \ \quad  (-1)^{\al_{j_2}} x_{ \om_{T_{j_2}}(b_{i_2,j_2})}
$$
are two terms of the formal sum \eqref{fsum} which are $\iota$-images of each other,  then
$$\{ \al_{j_1},  \al_{j_2} \} = \{ 1,2 \}     \quad \text{and} \ \quad     \om_{T_{j_1}} (b_{i_1,j_1}) = \om_{T_{j_2}}(b_{i_2,j_2}) .
$$
We identify  the initial vertices of the edges $e_{ i_1,j_1}$,  $e_{i_2, j_2}$ so that the vertex
$$
(e_{  i_1,j_1 })_- = (e_{ i_2, j_2 })_-
$$
becomes a primary vertex of  $Y_{2, Q}$. We do this identification of the initial vertices of all pairs of edges, corresponding as described above to all pairs of terms $\pm x_A$, $\mp x_A$ in \eqref{fsum}  that are $\iota$-images of each other.
As a result, we obtain an $\A$-graph $Y_{2, Q}$.
It is clear from the definitions that $Y_{2, Q}$ is a finite irreducible $\A$-graph such that the degree of any secondary vertex $u_j$ of $Y_{2, Q}$
is $k_j$ such that
$$
2 \le  k_j = |T_j|  \le d ,
$$
and the degree of every primary  vertex of $Y_{2, Q}$ is 2.
\smallskip

Looking at the coefficients of $-x_s$ in  \eqref{fsum}, we can see from  \eqref{l2i11}, \eqref{inqa}--\eqref{inqb} that
$$
C(Q) =  \sum_{j=1}^{|Q|} (k_j-2) >0 .
$$
Hence, the graph $Y_{2, Q}$ has a vertex of degree at least $3$.
\smallskip

Therefore, $Y_{2, Q}$ is a finite irreducible $\A$-graph such that $\core(Y_{2, Q}) = Y_{2, Q}$ and $\brr(Y_{2, Q}) >0$. Note that $Y_{2, Q}$ is not uniquely determined by $Q$ (because there are many choices to define the involution $\iota$, i.e., to do cancellations in the left hand side of \eqref{l2i11}).
\smallskip

Consider the graph $\core( Y_1 \times Y_{2, Q})$ and the associated graph maps
$$
\al_1 : \core( Y_1 \times Y_{2, Q}) \to Y_1 , \quad  \al_2 : \core( Y_1 \times Y_{2, Q})  \to Y_{2, Q}  .
$$
It follows from the definitions, in particular, from the $\al$-admissibility of functions
$\Omega_{T_{1}}, \ldots, \Omega_{T_{|Q|}}$,   that $\al_2$ is surjective. Hence,  $Y_{2, Q}$ has property (Bd).
\smallskip

Let  the sets  $A_1(u_j), \ldots, A_{k_j}(u_j)$ be defined for a secondary vertex  $u_j$ of $Y_{2, Q}$ as in \eqref{defA}  so that   $A_i(u_j)$ is defined by means of the primary vertex $(e_{i,j})_-$, where $ i = 1, \ldots, k_j$.
It is not difficult to see from the definitions that
$$
\Omega_{T_{j}}( b_{1,j}) \subseteq   A_1(u_j), \  \ldots , \   \Omega_{T_{j}}( b_{k_j,j}) \subseteq   A_{k_j}(u_j) .
$$
This observation means that if
$\wtl Q := \inq_d(V_SY_{2, Q})$  then $|\wtl Q | = |Q|$ and $Y_{2, \wtl Q} = Y_{2, Q} $ for a suitable involution $\wtl \iota =\wtl \iota(\wtl Q)$.
\smallskip

Hence, if $q_j$  has the form \eqref{inqa}, where $ k_j = k(q_j)$ as before, then we have
$$
\inq_S(u_j)^L = -x_{A_1(u_j)} - \cdots  -x_{A_{k_j}(u_j)} - (k_j-2)x_s ,
$$
where $\Omega_{T_{j}}( b_{i,j}) \subseteq   A_i(u_j)$ for $i = 1,  \ldots , k_j$, and
$$
N_1(\Omega_{T_{j}} )  \le   N_1( \wtl  \Omega_{T_{j}} ) = -  \inq_S(u_j)^R ,
$$
here $\wtl  \Omega_{T_{j}}$ is the function,
$$
\wtl  \Omega_{T_{j}}  : \{ b_{1,j},  \dots , b_{k_j, j} \} \to S_1(V_p Y_1) ,
$$
defined by  $\wtl  \Omega_{T_{j}} (b_{i,j}) := A_i(u_j) $ for $i = 1,  \ldots , k_j$.
\smallskip

Analogously, if $q_j$  has the form \eqref{inqb}, where $ k_j = k(q_j)$, then we have
$$
\inq_S(u_j)^L = x_{A_1(u_j)} + \cdots  + x_{A_{k_j}(u_j)} - (k_j-2)x_s ,
$$
where $\Omega_{T_{j}}( b_{i,j}) \subseteq   A_i(u_j)$ for $i = 1,  \ldots , k_j$, and
$$
N_1(\Omega_{T_{j}} )  \le   N_1(   \wtl  \Omega_{T_{j}}   ) = -  \inq_S(u_j)^R ,
$$
here $\wtl  \Omega_{T_{j}} $ is the function,
$$
\wtl  \Omega_{T_{j}}  : \{ b_{1,j},  \dots , b_{k_j, j} \} \to S_1(V_p Y_1) ,
$$
defined by  $\wtl  \Omega_{T_{j}} (b_{i,j}) := A_i(u_j) $ for $i = 1, \ldots, k_j$.
\smallskip

Therefore,
$$
\sum_{q \in  \inq_d(V_S Y_{2, Q}) } q^R  =  \sum_{q \in   \wtl Q}^{}q^R    \le \sum_{q \in Q}^{}q^R .
$$
Now both  \eqref{l2i2}--\eqref{l2i2a}  follow from Lemma~\ref{lem1}.
\end{proof}

We  summarize Lemmas~\ref{lem1}--\ref{lem2} in the following.

\begin{lem}\label{lem3}  The function
\begin{equation*}
\inq_d : Y_2 \mapsto \inq_d(V_S Y_2) = Q
\end{equation*}
from the set of finite irreducible $\A$-graphs $Y_2$ with property (Bd) to the set of combinations $Q$ with repetitions of the system $\SLI_d[Y_1]$ with the property
$\sum_{q \in Q}^{}q^L = -C(Q) x_s $,
where $C(Q)>0$ is an integer, is such that
\begin{equation*}
\sum_{q \in \inq_d(V_S Y_2) }^{}q^L    =   -2\brr(Y_2) x_s  \quad   \mbox{and}   \quad
\sum_{q \in   \inq_d(V_S Y_2) }^{}q^R  =   -2\brr(\core(Y_1 \times  Y_2)) .
\end{equation*}

In addition, for every $Q$ in the codomain of the function $\inq_d$,  there exists a graph $Y_{2, Q}$ in the domain of $\inq_d$  such that, letting
 $\wtl Q = \inq_d(V_S Y_{2, Q}) $, one has  $|\wtl Q | = |Q|$ and
\begin{align*}
\sum_{q \in  Q}^{}q^L & = \sum_{q \in \wtl Q}^{}q^L =     - 2\brr(Y_{2, Q}) x_s , \\
 \sum_{q \in  Q}^{}q^R & \ge \sum_{q \in \wtl Q}^{}q^R  =  - 2\brr(  \core(Y_1  \times  Y_{2, Q})  )    .
\end{align*}
\end{lem}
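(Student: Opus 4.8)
The plan is to read off Lemma~\ref{lem3} directly from Lemmas~\ref{lem1} and~\ref{lem2}, the only work being to match the hypotheses and the terminology. First I would record the elementary observation that a graph $Y_2$ has property (Bd) precisely when it satisfies the hypotheses of Lemma~\ref{lem1} (a finite irreducible $\A$-graph with $\core(Y_2)=Y_2$ and $\tau_2:\core(Y_1\times Y_2)\to Y_2$ surjective) together with $\deg Y_2\le d$ and $\brr(Y_2)>0$. For such a $Y_2$, Lemma~\ref{lem1} gives at once $\inq_d(V_S Y_2)\subseteq\SLI_d[Y_1]$ and the two identities $\sum_{q\in\inq_d(V_S Y_2)}q^L=-2\brr(Y_2)x_s$ and $\sum_{q\in\inq_d(V_S Y_2)}q^R=-2\brr(\core(Y_1\times Y_2))$. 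To see that $\inq_d$ does land in the codomain described in the statement, I would note that $\brr(Y_2)=-\chi(Y_2)=\tfrac12|EY_2|-|VY_2|$ is an integer, since oriented edges occur in pairs and hence $|EY_2|$ is even, and that it is positive by property (B); thus $C(Q):=2\brr(Y_2)$ is a positive integer and $\sum_{q\in Q}q^L=-C(Q)x_s$ for $Q=\inq_d(V_S Y_2)$, as required.

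For the second half I would begin from an arbitrary $Q$ in that codomain, that is, a nonempty combination with repetitions of $\SLI_d[Y_1]$ with $\sum_{q\in Q}q^L=-C(Q)x_s$ for some positive integer $C(Q)$. This is verbatim the hypothesis of Lemma~\ref{lem2}, which then produces a finite irreducible $\A$-graph $Y_{2,Q}$ with property (Bd) such that, writing $\wtl Q:=\inq_d(V_SY_{2,Q})$, one has $|\wtl Q|=|Q|$, the equality $\sum_{q\in Q}q^L=\sum_{q\in\wtl Q}q^L=-2\brr(Y_{2,Q})x_s$, and the inequality $\sum_{q\in Q}q^R\ge\sum_{q\in\wtl Q}q^R=-2\brr(\core(Y_1\times Y_{2,Q}))$. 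The one point worth stating explicitly is that $Y_{2,Q}$ genuinely lies in the domain of $\inq_d$, i.e. has property (Bd): the construction in Lemma~\ref{lem2} gives every secondary vertex of $Y_{2,Q}$ degree $k_j=|T_j|\le d$ and every primary vertex degree $2\le d$, so $\deg Y_{2,Q}\le d$, while the remaining clauses of (B) are verified within that proof. Assembling the two halves yields exactly the statement of Lemma~\ref{lem3}.

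I do not anticipate a genuine obstacle here: this is a summary lemma and all the substance already lives in Lemmas~\ref{lem1} and~\ref{lem2}. The only thing demanding a little care is the bookkeeping of vocabulary — confirming that ``finite irreducible $\A$-graph with property (Bd)'' names the same class of graphs whether a graph appears as an input to $\inq_d$ or is produced as the output $Y_{2,Q}$ of Lemma~\ref{lem2}, and that the integer $C(Q)$ from the codomain description coincides with $2\brr(Y_{2,Q})$ once a witnessing graph is at hand. Once those identifications are made explicit, the proof reduces to a citation of the two preceding lemmas.
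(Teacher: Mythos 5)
Your proposal is correct and follows exactly the route the paper takes: the paper's own proof of this lemma is a one-line citation of Lemmas~\ref{lem1} and~\ref{lem2}, and your write-up simply makes explicit the bookkeeping (that property (Bd) matches the hypotheses of Lemma~\ref{lem1}, that $C(Q)=2\brr(Y_2)$ is a positive integer, and that the graph $Y_{2,Q}$ produced by Lemma~\ref{lem2} lies in the domain of $\inq_d$) that the paper leaves implicit.
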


\begin{proof} This is straightforward  from Lemmas~\ref{lem1}--\ref{lem2} and their proofs.
\end{proof}

\section{Utilizing Linear and Linear Semi-Infinite Programming}

First we briefly review relevant results from the theory of linear programming (LP) over the field $\mathbb Q$ of rational numbers. Following the notation of Schrijver's monograph \cite{S86}, let $A \in \mathbb Q^{m'\times n'}$ be an $m'\times n'$-matrix, let $b \in \mathbb Q^{m'\times 1} = \mathbb Q^{m'} $ be a column vector,  let  $c \in \mathbb Q^{1\times n'} $ be a row vector, $c=(c_1, \ldots, c_{n'})$,
and let $x$ be a column vector consisting of variables $x_1, \dots, x_{n'}$, so $x = (x_1, \dots, x_{n'})^{\top}$, where $M^{\top}$ means the transpose of a matrix $M$.
The inequality $x \ge 0$ means that $x_i \ge 0$ for every $i$.
\smallskip

A typical LP-problem asks about the maximal value of the objective linear function
$$
cx= c_1x_1+\dots +c_{n'}x_{n'}
$$
over all $x \in \mathbb Q^{n'}$ subject to a finite system of linear inequalities $Ax \le b$.
This value (and often the LP-problem itself) is denoted
$$
\max\{ cx \mid Ax \le b  \} .
$$

We write $\max\{ cx \mid Ax \le b    \} = -\infty$ if the set $\{ cx \mid Ax \le b    \}$ is empty.
We write $\max\{ cx \mid Ax \le b    \} = +\infty$ if the set $\{ cx \mid Ax \le b    \}$ is unbounded from above and say that
 $\max\{ cx \mid Ax \le b   \} $ is finite if the set $\{ cx \mid Ax \le b    \}$ is nonempty and bounded from above. The notation and terminology for an LP-problem
 $$
 \min\{ cx \mid Ax \le b    \} = -  \max\{ -cx \mid Ax \le b    \}
 $$
 is analogous with
 $-\infty$ and $+\infty$ interchanged.
\smallskip

 If $\max\{ cx \mid Ax \le b   \} $ is an LP-problem as defined above, then the problem
 $$
 \min\{ b^{\top}y \mid A^{\top}y = c^{\top}, y\ge 0  \} ,
 $$
where  $y = (y_1, \dots, y_m)^{\top}$, is called the {\em dual} problem of the {\em primal}
LP-problem  $\max\{ cx \mid Ax \le b  \}$.
\smallskip

The (weak) duality theorem of linear programming can be stated as follows, see \cite[Section 7.4]{S86}.

\begin{TA} Let  $\max\{ cx \mid Ax \le b   \} $ be an LP-problem and let
$\min\{ b^{\top}y \mid A^{\top}y = c^{\top}, y\ge 0  \} $ be its dual LP-problem.
  Then
 for every $x \in  \mathbb Q^{n'}$ such that  $ Ax \le b$  and for every $y \in  \mathbb Q^{m'}$ such that  $A^{\top}y = c^{\top}$,  \ $y\ge 0$,  one has \ $c x  = y^{\top} Ax \le  b^{\top} y$ and
 \begin{equation}\label{dt}
\max\{ cx \mid Ax \le b    \}  =  \min\{ b^{\top}y \mid A^{\top}y = c^{\top}, y\ge 0  \}
\end{equation}
provided both polyhedra  $\{ x \mid Ax \le b \}$ and  $\{  y \mid A^{\top}y = c^{\top}, y\ge 0  \}$ are not empty.  In addition,
the minimum, whenever it is finite,  is attained at a vector $\wht y$  which is a vertex of
the polyhedron $\{ y  \mid A^{\top}y = c^{\top}, y\ge 0 \}$.
\end{TA}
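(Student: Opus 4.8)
The plan is to prove the three assertions of Theorem~A in order: the duality inequality $cx=y^{\top}Ax\le b^{\top}y$, the equality \eqref{dt} under the nonemptiness hypothesis, and the attainment of the dual minimum at a vertex of the dual polyhedron.

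The duality inequality is a one-line computation. If $Ax\le b$ and $A^{\top}y=c^{\top}$ with $y\ge 0$, then
$$
cx=(A^{\top}y)^{\top}x=y^{\top}(Ax)\le y^{\top}b=b^{\top}y ,
$$
where the first equality uses only $A^{\top}y=c^{\top}$ and the inequality uses $y\ge 0$ together with the componentwise inequality $Ax\le b$. In particular, once both polyhedra $\{x\mid Ax\le b\}$ and $\{y\mid A^{\top}y=c^{\top},\,y\ge 0\}$ are nonempty, every primal feasible value $cx$ is $\le$ every dual feasible value $b^{\top}y$; hence $\max\{cx\mid Ax\le b\}$ is finite and is at most the finite number $\min\{b^{\top}y\mid A^{\top}y=c^{\top},\,y\ge 0\}$.

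For the equality \eqref{dt}, set $\delta:=\sup\{cx\mid Ax\le b\}$, which is finite by the previous paragraph. Because $\delta$ is the least upper bound of $cx$ over the nonempty polyhedron $\{x\mid Ax\le b\}$, the inequality $cx\le\delta$ holds for \emph{every} feasible $x$; that is, $cx\le\delta$ is a consequence of the system $Ax\le b$. By the affine form of Farkas's lemma — either invoked from \cite{S86}, or proved from scratch by Fourier--Motzkin elimination of the variables $x_{1},\dots,x_{n'}$ (with $cx$ adjoined as an auxiliary variable), keeping track that every inequality produced is a nonnegative combination of previous ones — there is a vector $y\ge 0$ with $A^{\top}y=c^{\top}$ and $b^{\top}y\le\delta$. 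Such a $y$ is dual feasible, so applying the duality inequality and taking the supremum over primal feasible $x$ gives $\delta\le b^{\top}y$, whence $b^{\top}y=\delta$. Thus the dual minimum equals $\delta$, which is \eqref{dt}. I expect this step to be the main obstacle, since everything hinges on producing the nonnegative certificate $y$, i.e.\ on Farkas's lemma.

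Finally, for the vertex statement, observe that the dual polyhedron $P^{*}=\{y\mid A^{\top}y=c^{\top},\,y\ge 0\}$ is contained in the orthant $\{y\ge 0\}$ and therefore contains no affine line, i.e.\ it is pointed. By the Minkowski--Weyl decomposition one has $P^{*}=\mathrm{conv}(v_{1},\dots,v_{p})+\mathrm{cone}(r_{1},\dots,r_{q})$, where $v_{1},\dots,v_{p}$ are exactly the vertices of $P^{*}$. If $\min\{b^{\top}y\mid y\in P^{*}\}$ is finite then $b^{\top}r_{j}\ge 0$ for each recession generator $r_{j}$, for otherwise $b^{\top}y$ would be unbounded below along $r_{j}$; hence, for any $y=\sum_{i}\lambda_{i}v_{i}+\sum_{j}\mu_{j}r_{j}$ with $\lambda_{i},\mu_{j}\ge 0$ and $\sum_{i}\lambda_{i}=1$, we get $b^{\top}y\ge\sum_{i}\lambda_{i}\,b^{\top}v_{i}\ge\min_{i}b^{\top}v_{i}$. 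Therefore the minimum is attained at a vertex $\widehat y=v_{i_{0}}$ realizing $\min_{i}b^{\top}v_{i}$. Assembling the three parts completes the proof of Theorem~A.
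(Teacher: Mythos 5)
The paper offers no proof of Theorem~A: it is quoted as the standard LP duality theorem with a pointer to \cite[Section 7.4]{S86}, so there is no in-paper argument to compare yours against. Your proof is correct and is essentially the classical textbook argument that the cited reference itself gives: weak duality by the one-line computation $cx=y^{\top}Ax\le b^{\top}y$; strong duality from the affine form of Farkas's lemma (equivalently, Fourier--Motzkin elimination with bookkeeping of nonnegative multipliers); and attainment at a vertex from pointedness of $\{y\mid A^{\top}y=c^{\top},\,y\ge 0\}$ together with the Minkowski--Weyl decomposition and nonnegativity of $b^{\top}r_{j}$ on the recession generators. One small point to tidy up: your Farkas certificate shows that the dual \emph{minimum} is attained and equals $\delta=\sup\{cx\mid Ax\le b\}$, but \eqref{dt} is stated with $\max$ on the left, so you should add a sentence that a linear functional bounded above on a nonempty rational polyhedron attains its supremum --- for instance by applying your Minkowski--Weyl argument to the primal polyhedron (using minimal faces in place of vertices, since $\{x\mid Ax\le b\}$ need not be pointed), or by running the duality argument once more with the roles of primal and dual exchanged. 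With that remark added the proof is complete.
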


Since  the system of inequalities $\SLI[Y_1]$, as defined in Section~3, is infinite in general, we also recall basic terminology and results regarding   duality in linear semi-infinite programming (LSIP), see \cite{LSIP1}, \cite{LSIP2}, \cite{LSIP3}.  Consider a generalized LP-problem $\max\{ cx \mid Ax \le b    \}$ that has countably many linear inequalities in the system $Ax \le b$ while the number of variables in $x$ is still finite. Hence, in this setting,    $A$ is a matrix with countably many rows and $n'$ columns, or $A  \in \mathbb Q^{\infty \times n'}$ is an $\infty \times n'$-matrix, $b \in \mathbb Q^{\infty \times 1} = \mathbb Q^{\infty }$, or $b$ is an infinite column vector,  $c \in \mathbb Q^{1\times n'} $ is a row vector, and  $x = (x_1, \dots, x_{n'})^{\top}$.
\smallskip

A typical LSIP-problem over $\mathbb Q$ asks about the supremum  of the objective linear functional $cx$  over all $x \in \mathbb Q^{n'}$ subject to $Ax \le b$.  This number  and the problem itself is denoted $\sup \{ cx \mid Ax \le b \}$. As above, we write $\sup \{ cx \mid Ax \le b \} = -\infty$ if the set $\{ cx \mid Ax \le b    \}$ is empty,  $\sup \{ cx \mid Ax \le b    \} = +\infty$ if the set $\{ cx \mid Ax \le b    \}$ is not bounded from above and say that
 $\sup \{ cx \mid Ax \le b    \} $ is finite if the set $\{ cx \mid Ax \le b    \}$ is nonempty and bounded from above.  The notation and the terminology for an LSIP-problem
 $\inf \{ cx \mid Ax \le b \} = -  \sup \{ -cx \mid Ax \le b    \} $ is analogous with $-\infty$ and $+\infty$ interchanged.  Let $A_i$ denote the submatrix of $A$ of size $i \times n'$ whose first $i$ rows are those of $A$ and $b_i$ is the starting subcolumn of $b$ of  length $i$. Then
 $\max\{ cx \mid A_i x \le b_i \} $ is an LP-problem which is called the  {\em $i$-approximate} of the LSIP-problem  $\sup \{ cx \mid Ax \le b  \}$.
\smallskip

 Let  $M_i = \max\{ cx \mid A_i x \le b_i \}$ denote the optimal value of the $i$-approximate LP-problem  $\max\{ cx \mid A_i x \le b_i \}$ and $M$ is the number $\sup \{ cx \mid Ax \le b    \}$. Clearly, for every $i$, $M_i \ge M_{i+1}  \ge M$.    Note that in general  $\lim_{i \to \infty} M_i \ne M$, see  \cite{LSIP1}, \cite{LSIP2}.

Similarly  to  \cite{LSIP1}, \cite{LSIP2},  \cite{LSIP3},  we say that
if $\sup \{ cx \mid Ax \le b   \} $ is an LSIP-problem as above, then the problem
$$
\inf \{ b^{\top}y \mid A^{\top}y = c^{\top}, \ y\ge 0  \} ,
$$
where  $y = (y_1, y_2, \dots )^{\top}$ is an infinite vector whose set of nonzero components is finite, is called the {\em dual problem} of  $\sup \{ cx \mid Ax \le b  \}$.
\smallskip

For later references, we state the analogue of Theorem~A for linear semi-infinite programming which, in fact, is an easy corollary of Theorem~A.

\begin{TB}  Suppose that $\sup \{ cx \mid Ax \le b    \}$ is an LSIP-problem
whose set $\{ cx \mid Ax \le b    \}$ is nonempty and bounded from above and whose dual problem is
$\inf \{ b^{\top}y \mid A^{\top}y = c^{\top}, \ y\ge 0  \} $.  Then
 \begin{equation}\label{dtt}
\sup \{ cx \mid Ax \le b    \}  \le   \inf \{ b^{\top}y \mid A^{\top}y = c^{\top}, \ y\ge 0  \}
\end{equation}
and the equality holds  if and only if
$\sup \{ cx \mid Ax \le b    \}$ is equal to  $\lim_{i \to \infty} M_i$,
where $M_i := \max\{ cx \mid A_i x \le b_i \}$ is the optimal solution of  the
$i$-approximate LP-problem $\max\{ cx \mid A_i x \le b_i \}$
of the primal LSIP-problem  $\sup \{ cx \mid Ax \le b  \}$.
\end{TB}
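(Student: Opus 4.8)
The plan is to obtain Theorem~B from Theorem~A essentially for free, the single idea being that a dual-feasible vector $y$ for an LSIP-problem, having finite support by definition, is literally a dual-feasible vector for one of the approximating LP-problems.

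First I would prove the weak inequality \eqref{dtt}. If the dual polyhedron $\{y \mid A^{\top}y = c^{\top}, \ y \ge 0\}$ is empty the right-hand side is $+\infty$ and there is nothing to check. Otherwise, take any $x$ with $Ax \le b$ and any $y$ with $A^{\top}y = c^{\top}$, $y \ge 0$, and $y$ finitely supported. Because $y$ has finite support, $cx = (A^{\top}y)^{\top}x = y^{\top}(Ax)$ is a finite sum, and combining $y \ge 0$ with $Ax \le b$ coordinatewise gives $y^{\top}(Ax) \le y^{\top}b = b^{\top}y$. Taking the supremum over admissible $x$ and then the infimum over admissible $y$ yields \eqref{dtt}; this is just the pointwise inequality of Theorem~A, the only new point being that finite support of $y$ keeps every sum finite even though $A$ has countably many rows.

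Next I would compute the dual optimum $D := \inf\{b^{\top}y \mid A^{\top}y = c^{\top}, \ y \ge 0\}$ in terms of the numbers $M_i$. Let $D_i := \min\{b_i^{\top}y \mid A_i^{\top}y = c^{\top}, \ y \ge 0\}$ denote the optimal value of the dual of the $i$-approximate LP-problem. Given a finitely supported dual-feasible $y$, its support lies in the first $N$ coordinates for some $N$, and for every $i \ge N$ the truncation $(y_1, \dots, y_i)$ is feasible for the dual of the $i$-approximate problem with the same objective value $b_i^{\top}(y_1,\dots,y_i) = b^{\top}y$; conversely, padding any feasible vector for the $i$-approximate dual by zeros gives a feasible vector for the LSIP dual with the same objective value, and padding by one extra zero shows $D_{i+1} \le D_i$. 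Hence $D = \inf_i D_i = \lim_{i\to\infty} D_i$. Moreover each $i$-approximate primal feasible region $\{x \mid A_i x \le b_i\} \supseteq \{x \mid Ax \le b\}$ is nonempty by hypothesis, so linear programming duality — Theorem~A when $M_i$ is finite, together with the standard fact that a feasible unbounded LP has an infeasible dual when $M_i = +\infty$ — gives $D_i = M_i$ for every $i$. Therefore $D = \lim_{i\to\infty} M_i$.

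Combining the two steps gives $\sup\{cx \mid Ax \le b\} \le D = \lim_{i\to\infty}M_i$. Since, as already noted in the text, $M_i \ge \sup\{cx \mid Ax \le b\}$ for every $i$, the equality case of \eqref{dtt}, namely $\sup\{cx \mid Ax \le b\} = D$, holds precisely when $\sup\{cx \mid Ax \le b\} = \lim_{i\to\infty}M_i$, which is the assertion of Theorem~B. The only real obstacle is the low-stakes bookkeeping of the degenerate configurations — an infeasible LSIP dual, or approximating LP-problems that are unbounded for small $i$ so that Theorem~A does not apply verbatim — where one must check separately that $D_i = M_i = +\infty$ and that the identity $D = \lim_{i\to\infty} M_i$ persists.
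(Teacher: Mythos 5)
Your proof is correct, and it follows exactly the route the paper intends: the paper offers no written proof of Theorem B, stating only that it is "an easy corollary of Theorem~A," and your argument — weak duality via finitely supported dual vectors, the identification of the LSIP dual optimum with $\inf_i D_i = \lim_i D_i$, and $D_i = M_i$ from LP duality — is the standard way to fill in that claim. The degenerate cases you flag (infeasible approximate duals forcing $D_i = M_i = +\infty$) are handled correctly by the paper's conventions for optima over empty sets.
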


In the situation when the inequality \eqref{dtt} is strict,  the difference
$$
\inf \{ b^{\top}y \mid A^{\top}y = c^{\top}, \ y\ge 0  \} - \sup \{ cx \mid Ax \le b    \} >0
$$
is called the {\em duality gap} of the LSIP-problem $\sup \{ cx \mid Ax \le b \}$.
\medskip

We now consider the problem of maximizing the objective linear function
$$
cx :=  -x_s
$$
over all rational vectors $x$, $x \in \mathbb Q^{n'}$ for a suitable $n'$, subject to the system of linear inequalities $\SLI[Y_1]$, see  \eqref{slio}, as an LSIP-problem $\sup \{ cx \mid Ax \le b  \}$.
\medskip

We also consider a subsequence of $m_{\inq, d}$-approximate LP-problems
$$
\max \{ cx \mid A_{m_{\inq, d}} x \le b_{m_{\inq, d}}  \}
$$
of the LSIP-problem $\sup \{ cx \mid Ax \le b  \}$
whose systems $A_{m_{\inq, d}}x  \le b_{m_{\inq, d}}$ of inequalities are finite subsystems  $\SLI_d[Y_1]$ of  $\SLI[Y_1]$,
 where  $d = 3,4, \dots$, as defined in \eqref{slid}.
\smallskip

It is straightforward to verify that the dual problem
\begin{equation*}
\inf \{ b^{\top} y  \mid  A^{\top} y = c^{\top},  \ y\ge 0 \}
\end{equation*}
of this LSIP-problem $\sup \{ cx \mid Ax \le b  \}$   can be equivalently stated as follows.
\begin{equation}\label{dlpp}
\sum_{j=1}^{\infty } y_j q_j^R \to \inf  \quad  \mbox{subject to} \quad   y \ge 0 , \  \
\sum_{j=1}^{\infty } y_j q_j^L = - x_s ,
\end{equation}
where almost all  $y_j$, $j=1,2,\dots$,  are zeros.  We rewrite
\eqref{dlpp} in the form
\begin{equation}\label{dlp}
\inf \bigg\{ \sum_{j=1}^{\infty} y_j q_j^R  \  \Big|  \     y \ge 0 ,   \sum_{j=1}^{\infty} y_j q_j^L = - x_s  \bigg\} .
\end{equation}

Analogously, the dual problem  of the  $m_{\inq, d}$-approximate LP-problem
$$
\max \{ cx \mid A_{m_{\inq, d}} x \le b_{m_{\inq, d}}  \}
$$
can be stated in the form
\begin{equation*}
\sum_{j=1}^{m_{\inq, d} } y_j q_j^R \to \min  \quad   \mbox{subject to} \quad  y \ge 0,  \
\sum_{j=1}^{m_{\inq, d}} y_j q_j^L = - x_s
\end{equation*}
which  we write as follows.
\begin{equation}\label{dlpd}
\min  \bigg\{ \sum_{j=1}^{m_{\inq, d}} y_j q_j^R  \  \Big| \   y \ge 0 ,    \sum_{j=1}^{m_{\inq, d}} y_j q_j^L = - x_s   \bigg\} .
\end{equation}

In Lemma~\ref{lem3},  we established the existence of a function
$$
\inq_d  :   Y_2 \mapsto  \inq_d(Y_2)
$$
from the set of finite irreducible $\A$-graphs $Y_2$ with property (Bd) to a certain set of combinations with  repetitions of $\SLI_d[Y_1]$.
Now we will relate these combinations
with repetitions of $\SLI_d[Y_1]$ to solutions of the dual LP-problem \eqref{dlpd}.
\medskip

Consider a  combination
 with repetitions $Q$ of $\SLI_d[Y_1]$ that has the property that
 \begin{equation}\label{st1}
\sum_{q \in Q} q^L = -C(Q) x_s     ,
\end{equation}
where $C(Q) >0$ is an integer. As above in \eqref{dlpd}, let the inequalities of $\SLI_d[Y_1]$ be indexed and let
\begin{equation*}
\SLI_d[Y_1] = \{ q_1, \dots, q_{m_{\inq, d}}   \} .
\end{equation*}
 Let  $\eta_j(Q)$ denote the number of times that $q_j$ occurs in $Q$, and let $\kappa_j$ be the coefficient of $x_s$ in $q_j$.  Then it follows from the definitions and \eqref{st1} that
\begin{equation}\label{st2}
     \sum_{q \in Q} q^L =  \sum_{j=1 }^{m_{\inq, d}} \kappa_j \eta_j(Q) x_s =  -C(Q) x_s    .
\end{equation}

Consider the map
\begin{equation}\label{dfyy}
\sol_d :  Q \mapsto y_Q = ( y_{Q,1}, \dots,  y_{Q, m_{\inq, d}} )^{\top}   ,
\end{equation}
where
$y_{Q,j} :=  \frac{\eta_j(Q)}{C(Q)}$ for $j =1, \dots, m_{\inq, d}$.
It follows from the definitions that $y_Q$ is a rational vector, $y_Q \ge 0$ and, by \eqref{st2}, $y_Q$ satisfies the condition
$$
\sum_{j=1}^{m_{\inq, d}} y_{{Q,j}} q_j^L  = - x_s    .
$$
Hence, $y_Q$ is a vector in the feasible polyhedron
\begin{equation}\label{fpdp}
\bigg\{  y  \  \Big| \   y \ge 0, \sum_{j=1}^{m_{\inq, d}} y_{j} q_j^L   = - x_s  \bigg\}
\end{equation}
 of the dual  LP-problem \eqref{dlpd}.
\smallskip

Note that, in place of \eqref{dfyy},  we could also write
\begin{equation}\label{solQ}
\sol_d :  Q \mapsto  C(Q)^{-1}  \eta(Q)^{\top} ,
\end{equation}
where  $\eta(Q) = (  \eta_1(Q) , \dots, \eta_{m_{\inq, d}}(Q)   )$,  as  $y_Q =   C(Q)^{-1}  \eta(Q)^{\top}$.

\smallskip

Conversely, let $z = (z_1, \dots, z_{m_{\inq, d}})^{\top}$ be a vector of the feasible polyhedron
\eqref{fpdp} of the dual  LP-problem \eqref{dlpd}. Let $C >0 $ be a common multiple of positive denominators
of the rational numbers
$z_1, \dots, z_{m_{\inq, d}}$. Consider a combination with repetitions
$Q(z)$ of  $\SLI_d[Y_1]$ such that every $q_j$ of  $\SLI_d[Y_1]$ occurs
in $Q(z)$ exactly  $C z_{j} = n_j$ many times. Then it follows from the definitions that
\begin{equation}\label{49new}
\sum_{q \in Q(z)} q^L =   \sum_{j=1}^{m_{\inq, d}} n_j q_j^L = \sum_{j=1}^{m_{\inq, d}} C z_{j}  q_j^L  =  C \sum_{j=1}^{m_{\inq, d}} z_{j}  q_j^L  = - C x_s  .
\end{equation}
Now we can see from
\begin{equation}\label{c4x}
\frac {\eta_j(Q(z)) }{C} = \frac {C z_j}{C} = z_j ,
\end{equation}
where $j = 1, \dots, m_{\inq, d}$, that the vector $y_{Q(z)} = \sol_d(Q(z))$,  defined by \eqref{dfyy} for $Q(z)$,
is equal to $z$.

\begin{lem}\label{lem4}
The map
$$
\sol_d : Q \mapsto y_Q
$$
defined by \eqref{dfyy} is a surjective function from the set of combinations $Q$ with
repetitions of $\SLI_d[Y_1]$ that
satisfy the  equation $\sum_{q \in Q} q^L = -C(Q) x_s$, where $C(Q)  >0$ is an integer,
to  the feasible polyhedron  \eqref{fpdp}   of the dual  LP-problem
\eqref{dlpd}. Furthermore, the composition of the maps
$\inq_d$ and $\sol_d$,
$$
\sol_d  \circ \inq_d : Y_2 \mapsto  \sol_d  ( \inq_d (Y_2) )  = y_{Y_2} ,
$$
is a function from the set of graphs with property (Bd) to the feasible
polyhedron \eqref{fpdp} of  the dual  LP-problem
\eqref{dlpd}. Under this map, the value of the objective function
$
 \sum_{j=1}^{m_{\inq, d}}   y_{Y_2, j}  q_j^R
$
of the  dual  LP-problem  \eqref{dlpd}  at $y_{Y_2}$ satisfies the equality
\begin{equation}\label{ps4}
\sum_{j=1 }^{m_{\inq, d}}     y_{Y_2,  j}  q_j^R    =   -\frac{\brr (\core(Y_1 \times  Y_2))}{\brr (Y_2) }     .
\end{equation}

In addition, for every $z$  in the polyhedron \eqref{fpdp},
there is a vector $ \wtl z$ in  \eqref{fpdp}   such that
$\wtl z = \sol_d  ( \inq_d (Y_2) ) $ for some graph $Y_2$ with property (Bd) and
$$
\sum_{j=1}^{m_{\inq, d}} \wtl z_{j}  q_j^R \le  \sum_{j=1}^{m_{\inq, d}} z_{ j}  q_j^R .
$$
\end{lem}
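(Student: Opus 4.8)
The plan is to assemble the lemma entirely from the constructions and computations already made in this section; no genuinely new object needs to be built. The statement splits into three assertions — that $\sol_d$ is a well-defined surjection onto the feasible polyhedron \eqref{fpdp}, that the composition $\sol_d\circ\inq_d$ lands in \eqref{fpdp} and realizes the value \eqref{ps4}, and the final ``rounding'' claim — and I would treat them in that order.

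For the first assertion, well-definedness is exactly the paragraph preceding the lemma: for a combination $Q$ with $\sum_{q\in Q}q^L=-C(Q)x_s$ and $C(Q)>0$ an integer, the vector $y_Q$ with entries $\eta_j(Q)/C(Q)$ is rational and nonnegative and, by \eqref{st2}, satisfies $\sum_{j}y_{Q,j}q_j^L=-x_s$, so $y_Q\in$ \eqref{fpdp}; by formula \eqref{dfyy} (equivalently \eqref{solQ}) this vector does not depend on the chosen indexing of $\SLI_d[Y_1]$. Surjectivity is likewise already in hand: given $z\in$ \eqref{fpdp} I would pick a common multiple $C>0$ of the denominators of the entries of $z$, form the combination $Q(z)$ in which $q_j$ occurs $Cz_j$ times, and read off from \eqref{49new} that $C(Q(z))=C$ and from \eqref{c4x} that $\sol_d(Q(z))=z$; here $Q(z)$ is nonempty because $\sum_j z_jq_j^L=-x_s\neq 0$ forces some $z_j>0$.

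For the second assertion, I would take a graph $Y_2$ with property (Bd), set $Q:=\inq_d(V_SY_2)$, and invoke Lemma~\ref{lem3}: $\sum_{q\in Q}q^L=-2\brr(Y_2)x_s$ with $2\brr(Y_2)$ a positive integer, so the first assertion gives $y_{Y_2}:=\sol_d(Q)\in$ \eqref{fpdp}, hence $\sol_d\circ\inq_d$ is a function from graphs with property (Bd) into \eqref{fpdp}. The value \eqref{ps4} then follows by unwinding \eqref{dfyy} and applying the second equality of Lemma~\ref{lem3}:
\[
\sum_{j}y_{Y_2,j}q_j^R=\frac{1}{2\brr(Y_2)}\sum_{q\in Q}q^R=\frac{-2\brr(\core(Y_1\times Y_2))}{2\brr(Y_2)}=-\frac{\brr(\core(Y_1\times Y_2))}{\brr(Y_2)}.
\]

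For the third assertion, starting from $z\in$ \eqref{fpdp} I would use surjectivity to pick a combination $Q$ with $\sum_{q\in Q}q^L=-C(Q)x_s$ and $\sol_d(Q)=z$, so that $\sum_j z_jq_j^R=C(Q)^{-1}\sum_{q\in Q}q^R$, and then apply the ``in addition'' clause of Lemma~\ref{lem3} (that is, Lemma~\ref{lem2}) to $Q$, producing a graph $Y_{2,Q}$ with property (Bd) such that, with $\wtl Q:=\inq_d(V_SY_{2,Q})$, one has $\sum_{q\in\wtl Q}q^L=\sum_{q\in Q}q^L$ and $\sum_{q\in\wtl Q}q^R\le\sum_{q\in Q}q^R$. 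Comparing the $x_s$-coefficients in the first equality gives $C(\wtl Q)=C(Q)$, so putting $\wtl z:=\sol_d(\wtl Q)=\sol_d(\inq_d(V_SY_{2,Q}))$ — which lies in \eqref{fpdp} by the second assertion — one obtains $\sum_j\wtl z_jq_j^R=C(Q)^{-1}\sum_{q\in\wtl Q}q^R\le C(Q)^{-1}\sum_{q\in Q}q^R=\sum_j z_jq_j^R$, as required. The only point requiring care anywhere in the argument is this last normalization: all the substance already sits in Lemmas~\ref{lem1}--\ref{lem3}, and here one merely has to notice that equality of the $x_s$-coefficients forces $C(Q)=C(\wtl Q)$ to be a common positive integer, so that dividing the inequality $\sum_{q\in Q}q^R\ge\sum_{q\in\wtl Q}q^R$ by it preserves its direction; I expect no real obstacle beyond this bookkeeping.
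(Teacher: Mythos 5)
Your proposal is correct and follows essentially the same route as the paper: surjectivity via the computations \eqref{49new}--\eqref{c4x}, the value \eqref{ps4} by combining Lemma~\ref{lem3} with $C(\inq_d(V_SY_2))=2\brr(Y_2)$, and the final claim by pulling back $z$ to a combination $Q$ and passing to $\wtl Q=\inq_d(V_SY_{2,Q})$ with the same $x_s$-coefficient. The only additions (nonemptiness of $Q(z)$ and the explicit remark that $C(Q)=C(\wtl Q)$) are harmless refinements of what the paper leaves implicit.
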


\begin{proof} As was observed above, see computations \eqref{49new}--\eqref{c4x},  $\sol_d$ is a surjective function.
\smallskip

Consider a finite irreducible $\A$-graph  $Y_2$ with property (Bd) and define
$$
Q := \inq_d(Y_2) ,  \quad y_{Y_2} := \sol_d(Q) .
$$
It follows from Lemma~\ref{lem3} that
\begin{equation}\label{eqq1}
\sum_{q \in Q} q^L = - 2\brr (Y_2) x_s  \quad \mbox{ and} \quad  \sum_{q \in Q} q^R = - 2\brr (\core(Y_1 \times  Y_2) ) .
\end{equation}
It follows from  \eqref{st2} and  \eqref{eqq1} that   $C(Q ) = 2\brr(Y_2)$. Hence, using the definition \eqref{dfyy} and  equalities  \eqref{eqq1}, we obtain
\begin{equation*}
\sum_{j=1}^{m_{\inq, d}}   y_{Y_2,  j}   q_j^R  =  \frac { \sum_{q \in Q } q^R  } {
C(Q) } = -\frac {\brr( \core(Y_1 \times  Y_2) ) }{ \brr(Y_2 ) }   ,
 \end{equation*}
as required in \eqref{ps4}.
\smallskip

To prove the additional statement, consider a vector $z$ in the polyhedron \eqref{fpdp}.

Since  $\sol_d$ is  surjective, there is a combination with repetitions $Q$ such that $\sol_d(Q) = z$. By Lemma~\ref{lem3}  for this $Q$,
there is a graph $Y_{2, Q}$ such that if $\inq_d(V_S Y_{2, Q}) = \wtl Q$ then
$|\wtl Q | = |Q|$ and
\begin{align}\label{411a}
\sum_{q \in  Q}^{}q^L & = \sum_{q \in \wtl Q}^{}q^L =     - 2\brr(Y_{2, Q}) x_s = - C(Q)x_s = - C(\wtl Q)x_s  , \\ \label{411b}
\sum_{q \in  Q}^{}q^R & \ge \sum_{q \in \wtl Q}^{}q^R  =  - 2\brr(  \core(Y_1  \times  Y_{2, Q})  )  .
\end{align}

Let $\wtl z := \sol_d( \wtl Q)$. Then, in view of \eqref{411a}--\eqref{411b}, we obtain
\begin{equation*}
\sum_{j=1}^{m_{\inq, d}}  \wtl z_j q_j^R = \frac { \sum_{q \in \wtl Q } q^R }
{ C(\wtl Q) } \le  \frac { \sum_{q \in  Q } q^R }{ C(Q) } =  \sum_{j=1}^{m_{\inq, d}}   z_j q_j^R  ,
 \end{equation*}
as required.
\end{proof}

We will say that a real number  $\sigma(Y_1)\ge 0$ is the WN-{\em coefficient}
for $Y_1$ if
\begin{equation*}
\brr ( \core(Y_1 \times  Y_2) )  \le  \sigma(Y_1)  \brr(Y_1) \brr(Y_2)
\end{equation*}
for every finite irreducible $\A$-graph $Y_2$ with property (B) and  $\sigma(Y_1)$ is minimal with this property.
\smallskip

We also consider the  WN$_d$-coefficient   $\sigma_d(Y_1)$, where $d \ge 3$ is an integer, for  $Y_1$ defined so that
\begin{equation*}
\brr ( \core(Y_1 \times  Y_2) )  \le  \sigma_d(Y_1)  \brr(Y_1) \brr(Y_2)
\end{equation*}
for every finite irreducible $\A$-graph $Y_2$ with property (Bd) and $\sigma_d(Y_1)$ is minimal with this property.
\smallskip

It is clear from the definitions that
$$
\sigma_d(Y_1) \le \sigma_{d+1}(Y_1) \le \sigma(Y_1)
$$
for every $d =3,4,\dots$  and
\begin{align}\label{supsd}
\sup_d \{ \sigma_d(Y_1)  \}= \sigma(Y_1) .
\end{align}

Observe that
$$
 \sigma(Y_1)  = \sup_{Y_2} \bigg\{  \frac {\brr(  \core(Y_1 \times  Y_2) ) }{\brr(Y_1 ) \brr(Y_2 ) } \bigg\}
$$
 over all  finite  irreducible $\A$-graphs $Y_2$ with property (B). Similarly,
\begin{align}\label{sigmd}
 \sigma_d(Y_1)   = \sup_{Y'_2}  \bigg\{ \frac {\brr(\core(Y_1 \times  Y'_2) ) }{\brr(Y_1 ) \brr(Y'_2 ) }  \bigg\}
 \end{align}
 over all  finite irreducible $\A$-graphs $Y'_2$  with property (Bd).

\begin{lem}\label{lem5} Both optima
$$
\max \{ - x_s \mid \SLI_d[Y_1]\} \quad \mbox{and} \quad \min   \bigg\{  \sum_{j=1}^{m_{\inq, d}}   y_j q_j^R   \  \Big|   \  y \ge 0 , \
\sum_{j=1}^{m_{\inq, d}}   y_j q_j^L  = - x_s \bigg\}
$$
are finite and satisfy the following inequalities and equalities
\begin{align}\label{st3}
\begin{split}
- 2 \tfrac{q^*}{q^*-2} \brr(Y_1) & \le \sup \{ - x_s \mid \SLI [Y_1]\}   \\
 & \le   \max \{ - x_s \mid \SLI_d[Y_1]\}   \\
 & = \min \bigg\{ \sum_{j=1}^{m_{\inq, d}}  y_j q_j^R  \ \Big| \  y  \ge 0 , \ \sum_{j=1}^{m_{\inq, d}}   y_j q_j^L  = - x_s \bigg\} \\
  & =   - \sigma_d(Y_1) \brr(Y_1)  .
\end{split}
\end{align}

Furthermore, the minimum is attained at a vector $\wtl y_V = \wtl y_V(d)$ of the feasible polyhedron  \eqref{fpdp} of the dual LP-problem \eqref{dlpd} such that
there is a graph $Y_{2, Q_V}$  that  has property (Bd), $\wtl y_V  = \sol_d(\inq_d(Y_{2, Q_V}))$ and the following hold
\begin{align}
\begin{split}\label{st3a}
\inf  & \bigg\{  \sum_{j=1}^{\infty}   y_j q_j^R  \ \Big| \  y \ge 0 , \
\sum_{j=1}^{\infty}   y_j q_j^L  = - x_s \bigg\} \\
 &  =  - \sigma(Y_1) \brr(Y_1) \\
& \le \min   \bigg\{  \sum_{j=1}^{m_{\inq, d}}   y_j q_j^R    \ \Big| \    y \ge 0 , \
\sum_{j=1}^{m_{\inq, d}}   y_j q_j^L  = - x_s \bigg\} \\
& =  - \sigma_d(Y_1)  \brr(Y_1) .
\end{split}
\end{align}

In particular,
$\sigma_d(Y_1) \le \sigma(Y_1) \le 2 \frac{q^*}{q^*-2}$.
\end{lem}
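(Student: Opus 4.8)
The plan is to reduce the statement to linear-programming duality (Theorem~A) applied to the finite systems $\SLI_d[Y_1]$, read through the dictionary of Lemmas~\ref{lem3}--\ref{lem4} between finite irreducible $\A$-graphs $Y_2$ with property (Bd) and the feasible polyhedron \eqref{fpdp} of the dual LP-problem \eqref{dlpd}. The only external input is the Dicks--Ivanov inequality \eqref{di}, which via Lemma~\ref{Lm3} says $\brr(\core(Y_1\times Y_2))\le 2\tfrac{q^*}{q^*-2}\brr(Y_1)\brr(Y_2)$ for every finite irreducible $\A$-graph $Y_2$ with property (B); taking the supremum over $Y_2$ this yields both $\sigma_d(Y_1)\le\sigma(Y_1)\le 2\tfrac{q^*}{q^*-2}$ (the final clause of the lemma) and the inequality $-\sigma_d(Y_1)\brr(Y_1)\ge -2\tfrac{q^*}{q^*-2}\brr(Y_1)$.

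First I would check that $\max\{-x_s\mid\SLI_d[Y_1]\}$ and its dual \eqref{dlpd} are both feasible. The vector with all $x_A=0$ and $x_s=|V_PY_1|$ satisfies every inequality \eqref{inqa}--\eqref{inqb}: for $k\ge 3$ its left-hand side equals $-(k-2)|V_PY_1|\le -N_\al(\om_T)$ by \eqref{cd2}, and for $k=2$ both sides vanish; so the primal is feasible, and in particular $\sup\{-x_s\mid\SLI[Y_1]\}\ge -|V_PY_1|$ is already finite. The dual polyhedron \eqref{fpdp} is nonempty because it contains $\sol_d(\inq_d(Y_2))$ for any $Y_2$ with property (Bd), and such graphs exist for every $d\ge 3$. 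Hence Theorem~A shows $\max\{-x_s\mid\SLI_d[Y_1]\}$ is finite and equals the dual optimum $\min\{\sum_{j=1}^{m_{\inq,d}}y_jq_j^R\mid y\ge 0,\ \sum_{j=1}^{m_{\inq,d}}y_jq_j^L=-x_s\}$, attained at a vertex of \eqref{fpdp}. To evaluate this common value I combine \eqref{ps4}, which gives that the dual objective at $\sol_d(\inq_d(Y_2))$ is $-\brr(\core(Y_1\times Y_2))/\brr(Y_2)$, with the last assertion of Lemma~\ref{lem4}, by which every feasible $z$ is dominated in objective value by such a graph-point; it follows that the dual minimum equals $\inf_{Y_2}\{-\brr(\core(Y_1\times Y_2))/\brr(Y_2)\}=-\sigma_d(Y_1)\brr(Y_1)$ by \eqref{sigmd}, and, applying the domination to an optimal vertex, it is attained at a graph-point $\wtl y_V=\sol_d(\inq_d(Y_{2,Q_V}))$ with $Y_{2,Q_V}$ having property (Bd). This establishes the displayed equalities of \eqref{st3} and the ``furthermore'' clause.

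The statement \eqref{st3a} comes from rerunning this argument for the full system: every feasible point of $\{y\ge 0,\ \sum_{j=1}^{\infty}y_jq_j^L=-x_s\}$ is again dominated by a graph-point $\sol_d(\inq_d(Y_2))$, now with $Y_2$ ranging over all property-(B) graphs, so the infimum there equals $\inf_{Y_2}\{-\brr(\core(Y_1\times Y_2))/\brr(Y_2)\}=-\sigma(Y_1)\brr(Y_1)$ by the definition of $\sigma(Y_1)$, and $-\sigma(Y_1)\brr(Y_1)\le -\sigma_d(Y_1)\brr(Y_1)$ since $\sigma_d(Y_1)\le\sigma(Y_1)$. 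The inequality $\sup\{-x_s\mid\SLI[Y_1]\}\le\max\{-x_s\mid\SLI_d[Y_1]\}$ in \eqref{st3} is the generic fact that an approximate LP-problem dominates its LSIP-problem, applicable since $\SLI_d[Y_1]\subseteq\SLI[Y_1]$.

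The hard part — and where I expect to spend most of the effort — is the remaining lower bound $-2\tfrac{q^*}{q^*-2}\brr(Y_1)\le\sup\{-x_s\mid\SLI[Y_1]\}$, i.e.\ the existence of a vector feasible for the \emph{entire} infinite system $\SLI[Y_1]$ with $x_s\le 2\tfrac{q^*}{q^*-2}\brr(Y_1)$. The naive vector above only gives $\ge -|V_PY_1|$, which is strictly weaker whenever $|V_PY_1|>2\tfrac{q^*}{q^*-2}\brr(Y_1)$ (e.g.\ for long subdivided graphs with $\brr(Y_1)=1$), and since the $x_A$ are constrained from below by the type-\eqref{inqa} inequalities and from above by the type-\eqref{inqb} inequalities, getting down to the Dicks--Ivanov value genuinely uses \eqref{di}. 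My plan is to fix $x_s:=2\tfrac{q^*}{q^*-2}\brr(Y_1)$ — note that \eqref{inqa}--\eqref{inqb} are only relaxed as $x_s$ grows — and to set each auxiliary coordinate to a supremum of appropriately normalized vertex-counts of $\core(Y_1\times Y_2)$ over all property-(B) graphs $Y_2$, the finiteness of that supremum being exactly \eqref{di}, and then to verify, with the help of \eqref{cd2}, that all of \eqref{inqa}--\eqref{inqb} hold for this assignment. (Alternatively one passes to a subsequential limit of near-optimal points of the $\SLI_d[Y_1]$ as $d\to\infty$, the delicate step being to keep the auxiliary coordinates bounded.) With such a feasible vector in hand, the inequalities and equalities in \eqref{st3}--\eqref{st3a} all line up.
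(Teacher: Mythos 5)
Your duality skeleton --- primal and dual feasibility, Theorem~A, and the identification of the dual optimum with $-\sigma_d(Y_1)\brr(Y_1)$ via \eqref{ps4} together with the domination clause of Lemma~\ref{lem4}, followed by the passage to the infinite system for \eqref{st3a} --- is exactly the paper's argument, and those parts are in order (modulo the small point that the existence of a graph with property (Bd), needed for dual feasibility, is not automatic when $\deg Y_1 > d$; the paper produces one by pruning edges at high-degree secondary vertices of $Y_1$).

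The genuine gap is the first inequality of \eqref{st3}, which you correctly single out as the hard part but leave as a plan, and neither of your two sketches goes through as stated. The paper's actual argument is simpler than what you propose: it keeps \emph{all} coordinates $x_A=0$ and sets $x_s=2\tfrac{q^*}{q^*-2}\brr(Y_1)$. Since every $x_A$ vanishes, both \eqref{inqa} and \eqref{inqb} reduce to the single condition $N_\al(\om_T)\le (k-2)x_s$, so your worry about the $x_A$ being squeezed from below by \eqref{inqa} and from above by \eqref{inqb} is moot. What is then required is the refined bound $N_\al(\om_T)\le \tfrac{2q^*}{q^*-2}(k-2)\brr(Y_1)$, and this does \emph{not} follow from \eqref{cd2} (which only yields $(k-2)|V_PY_1|$, i.e., precisely your weaker $-|V_PY_1|$ bound); it is obtained by bounding the contribution to $N_\al(\om_T)$ of each secondary vertex $u\in V_SY_1$ with label set $B$ by $\sum_{g\in G_\al}\max(|T\cap Bg|-2,0)$, invoking the combinatorial inequality $\sum_{g\in G_\al}\max(|T\cap Bg|-2,0)\le\tfrac{q^*}{q^*-2}(|T|-2)(|B|-2)$ of \cite[Corollary~3.5]{DIv}, and summing $(\deg u-2)$ over $u\in V_SY_1$. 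Citing \eqref{di} (a statement about reduced ranks of intersections of subgroups) is not a substitute for this pointwise combinatorial estimate, and your primary plan (nonzero $x_A$ given by suprema of normalized vertex counts, verified ``with the help of \eqref{cd2}'') points at the wrong tool without a verification; your alternative (a subsequential limit of optima of the systems $\SLI_d[Y_1]$) founders exactly on the boundedness of the $x_A$-coordinates that you yourself flag as delicate and do not resolve.
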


\begin{proof} Recall that every primary vertex of $Y_1$ has degree 2 and $d \ge 3$. Hence,
if the graph $Y_1$ contains a vertex  $u$  of degree $> d$, then $u$ is secondary and we may take some edges out of $Y_1$ to get a subgraph $\wht Y_1$  of $Y_1$ such that $| E \wht  Y_1 | < | E Y_1 |$, $\brr (\wht Y_1) >0$ and
$\core(\wht Y_1) = \wht Y_1$.  It is clear that the natural projection
$$
\tau_2 : \core(Y_1 \times \wht Y_1) \to \wht  Y_1
$$
is surjective. Hence, either the graph $\wht Y_1$ has property (Bd) or, otherwise,  $\wht  Y_1$ has a vertex of degree greater than $ d$.
Iterating this argument,  we can prove that $Y_1$ contains a subgraph $Y_{1, d}$  with property (Bd).
\smallskip

Setting $Y_2 :=  Y_{1, d}$,  we obtain, by Lemma~\ref{lem4},  a solution $\wht y =  \sol_d ( \inq_d  (Y_2))$ to the equalities and inequalities
that define the  feasible  polyhedron \eqref{fpdp}  of \eqref{dlpd}.  Hence,  both sets
$$
\bigg\{  y  \ \Big| \  y \ge 0 , \sum_{j=1}^{m_{\inq, d}}  y_j q_j^L  = - x_s \bigg\} ,   \quad
\bigg\{  y  \ \Big| \  y \ge 0 , \sum_{j=1}^{\infty}  y_j q_j^L  = - x_s \bigg\}
$$
are not empty.
\smallskip

To see that the sets  $ \{ x \mid \SLI[Y_1]\}$,
$\{  x \mid \SLI_d[Y_1]\}$  are not empty either,  we will show that the vector  $\wht x$, whose components are
 $\wht x_A := 0$ for every nonempty  $A \subseteq V_P Y_1$ and $\wht  x_{s} := 2\frac{q^*}{q^*-2} \brr(Y_1)$, is a solution both to $\SLI_d[Y_1]$ and  to $\SLI[Y_1]$.  To do this,  we will check that every inequality of $\SLI[Y_1]$ is satisfied with these values of variables, that is,
 \begin{align}\label{dd1}
 -( k-2) \cdot \tfrac{2q^*}{q^*-2} \brr(Y_1) \le -   N_\al(\Omega_T)
\end{align}
 for every $\al$-admissible function
$$
  \Omega_T : T \to S_1( V_P Y_1 ) ,
$$
where $T \in S_2(G_\al)$ and $|T| = k$.
\smallskip

Let  $T = \{ a_1, \dots, a_k\}$, $k \ge 2$, $a_i \in G_\al$,
  and $\Omega_T(a_i) = A_i$, $i =1,\dots, k$.
\smallskip

Consider a secondary vertex $u$ of $Y_1$, suppose $\deg u = \ell$ and let
$e_1, \dots, e_{\ell}$ be all edges of $Y_1$ such that
 $u = (e_1)_+ = \dots = (e_\ell)_+$.
Denote
 $$
 B := \{   \ph(e_1),  \dots,  \ph(e_\ell) \} .
 $$
 It is not difficult to see from the  definition  \eqref{Nal} of the number $N_\al(\om_T)$ that  the contribution to the sum $N_\al(\Omega_T)$,  made by those equivalence classes  that are associated with the vertex  $u \in V_S Y_1$,  does not exceed
$$
\sum_{g \in G_\al}  \max( | T \cap  B g | -2 ,  0) .
$$
Hence, it follows from the definition of the number $\frac{q^*}{q^*-2}$, see  \eqref{di}, and  from the results of Dicks and the author
\cite[Corollary 3.5]{DIv}  that
\begin{align}\label{dd2}
\begin{split}
\sum_{g \in G_\al} \max(|T \cap Bg | -2, 0)  & \le \tfrac{q^*}{q^*-2}(|T|-2)(|B|-2) \\
  & = \tfrac{q^*}{q^*-2}(k-2)(\ell-2) .
\end{split}
\end{align}
Therefore, summing up  inequalities \eqref{dd2} over all $u \in V_S Y_1$, we obtain
\begin{align*}
N_\al(\Omega)  & \le \tfrac{q^*}{q^*-2} (k-2) \cdot 2 \brr_\al (Y_1) \\
 & \le \tfrac{q^*}{q^*-2} ( k-2 ) \cdot 2 \brr (Y_1)  ,
\end{align*}
where  $2 \brr_\al (Y_1)$ is the sum $\sum_u (\deg u -2)$ over all  secondary vertices $u \in V_S Y_1$ of type $\al$.  This proves \eqref{dd1} and also shows that
\begin{align}\label{add2a}
  -\tfrac{2q^*}{q^*-2}  \brr (Y_1)  \le   \sup \{ -x_s \mid \SLI[Y_1]\}
\end{align}
because $\wht x$ with $\wht x_s =  \frac{2q^*}{q^*-2}  \brr (Y_1) $ is a solution to  $\SLI[Y_1]$.

Therefore, both  sets $\{ x \mid \SLI[Y_1]\}$ and
$\{  x \mid \SLI_d[Y_1]\}$  are not empty as required.
\smallskip

According to Theorem~A, the maximum and minimum in \eqref{st3}  are finite and equal.
The first inequality in \eqref{st3}  is shown in  \eqref{add2a} and the second one follows from the definitions.
\smallskip

It follows from the definition   \eqref{sigmd} and  Lemma~\ref{lem4} that the supremum
\begin{equation*}
  \sup_{Y_2} \bigg\{ \frac {\brr (  \core(Y_1 \times Y_2) ) }{ \brr(Y_2 ) }  \Big\}   = \sigma_d(Y_1 )  \brr(Y_1 )  =
- \inf_{Y_2}  \bigg\{ - \frac {\brr(  \core(Y_1 \times Y_2)  ) }{ \brr(Y_2 ) }   \bigg\}
\end{equation*}
over all  graphs $Y_2$ with property (Bd) is equal to
\begin{align*}
  \sigma_d(Y_1 )  \brr(Y_1 ) &  =  -\inf \bigg\{  \sum_{j=1}^{m_{\inq, d}}   y_j q_j^R   \   \Big| \  y \ge 0 , \ \sum_{j=1}^{m_{\inq, d}}
 y_j q_j^L = - x_s \bigg\}   \\
  & =  - \min \bigg\{  \sum_{j=1}^{m_{\inq, d}}   y_j q_j^R   \ \Big| \  y \ge 0 , \ \sum_{j=1}^{m_{\inq, d}}
 y_j q_j^L = - x_s  \bigg\} ,
\end{align*}
as stated in the last  equality of \eqref{st3}.

\smallskip
The inequalities and equalities of \eqref{st3}   are now proven.
\medskip

By Theorem~A,  the minimum in  \eqref{st3a}   of the LP-problem  \eqref{dlpd}
is attained at a vertex  $y_V = y_V(d)$  of  the feasible polyhedron \eqref{fpdp}.
\smallskip

It follows from Lemma~\ref{lem4} that, for the vertex $y_V$, there exists a vector $\wtl y_V$ in the polyhedron \eqref{fpdp}   such that
$$
\sum_{j=1}^{m_{\inq, d}} \wtl y_{V, j}  q_j^R \le \sum_{j=1}^{m_{\inq, d}} y_{V, j}  q_j^R .
$$
and $\wtl  y_V = \sol_d  ( \inq_d (Y_{2, Q_V}) ) $ for some graph $Y_{2, Q_V}$ with property (Bd).
Hence, the minimum in   \eqref{st3a} is also attained at  $\wtl  y_V$.
\medskip

In view of the last  equality of \eqref{st3} and  \eqref{supsd}, we obtain
\begin{align}\label{st3a1}
\begin{split}
\inf_d  &  \bigg\{ \min   \bigg\{  \sum_{j=1}^{m_{\inq, d}}   y_j q_j^R   \ \Big| \
y \ge 0 , \  \sum_{j=1}^{m_{\inq, d}}   y_j q_j^L  = - x_s \bigg\} \bigg\} \\
& = \inf_d \{  - \sigma_d(Y_1)  \brr(Y_1)   \}  \\
 &  =  - \sigma(Y_1) \brr(Y_1) \\
& \le \min   \bigg\{  \sum_{j=1}^{m_{\inq, d}}   y_j q_j^R    \ \Big| \    y \ge 0 , \
\sum_{j=1}^{m_{\inq, d}}   y_j q_j^L  = - x_s \bigg\} \\
& =  - \sigma_d(Y_1)  \brr(Y_1) .
\end{split}
\end{align}

On the other hand,  it is clear that
\begin{align}\label{st3a2}
\begin{split}
 \inf &  \bigg\{ \sum_{j=1}^{\infty}   y_j q_j^R   \ \Big| \  y \ge 0 , \
\sum_{j=1}^{\infty}   y_j q_j^L  = - x_s \bigg\} \\  & = \inf_d \bigg\{  \min   \bigg\{  \sum_{j=1}^{m_{\inq, d}}   y_j q_j^R   \ \Big| \
y \ge 0 , \  \sum_{j=1}^{m_{\inq, d}}   y_j q_j^L  = - x_s \bigg\} \bigg\} .
\end{split}
\end{align}
Now the equalities and  inequalities \eqref{st3a} follow from \eqref{st3a1}--\eqref{st3a2}

The inequalities $\sigma_d(Y_1) \le \sigma(Y_1)  \le  2\frac{q^*}{q^*-2}$ follow from  \eqref{st3} and \eqref{supsd}.
\end{proof}

\begin{lem}\label{lem6}
There exists a finite irreducible $\A$-graph $Y_{2, Q_V} = Y_{2, Q_V}(Y_1)$ with property (Bd) such that
$$
\brr(  \core(Y_1 \times Y_{2, Q_V})  )   =   \sigma_d(Y_1 ) \brr(Y_1 ) \brr(Y_{2, Q_V} ) ,
$$
$Y_{2, Q_V}$ is connected, and
$$
| E Y_{2, Q_V} |  < 2^{2^{  |E Y_1|/4 + \log_2\log_2(4d)}} .
$$
\end{lem}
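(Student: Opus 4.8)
The plan is to obtain the graph $Y_{2,Q_V}$ from a \emph{vertex} optimal solution of the dual LP-problem \eqref{dlpd}, to force connectedness by a minimality choice, and to read off the doubly exponential bound from Cramer's rule applied to that vertex. The equality itself is essentially already at hand: by Lemma~\ref{lem5} together with identity \eqref{ps4} of Lemma~\ref{lem4}, there is at least one finite irreducible $\A$-graph $Y_2$ with property (Bd) for which $\sol_d(\inq_d(Y_2))$ attains the minimum $-\sigma_d(Y_1)\brr(Y_1)$ of \eqref{dlpd}, which by \eqref{ps4} says exactly that $\brr(\core(Y_1\times Y_2))=\sigma_d(Y_1)\brr(Y_1)\brr(Y_2)$. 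I would then let $Y_{2,Q_V}$ be such a graph with the number $|E Y_2|$ of oriented edges minimal (possible since these are positive integers); by construction it has property (Bd) and realizes the equality, so only connectedness and the size bound remain.

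For connectedness I would argue by contradiction. Suppose $Y_{2,Q_V}=\bigsqcup_i Y^{(i)}$ has more than one component. First dispose of any component $C_0$ with $\brr(C_0)=0$: such a $C_0$ is a simple cycle, the subgroup $H(C_0)$ is infinite cyclic and factor-free (Lemma~\ref{Lm2}), so $\brr(\core(Y_1\times C_0))\le 2\tfrac{q^*}{q^*-2}\brr(Y_1)\brr(H(C_0))=0$ by \eqref{di} and Lemma~\ref{Lm3}; since $Y_1\times(\bigsqcup_i Y^{(i)})=\bigsqcup_i(Y_1\times Y^{(i)})$, the core distributes over disjoint unions, and $\brr=-\chi$ (Lemma~\ref{Lm2}) is additive over them, deleting $C_0$ leaves a graph with property (Bd) that still realizes the equality but has fewer edges, contradicting minimality. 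So every component has positive reduced rank, and each is then itself a graph with property (Bd). Writing $Q:=\inq_d(V_S Y_{2,Q_V})=\bigsqcup_i Q^{(i)}$ with $Q^{(i)}:=\inq_d(V_S Y^{(i)})$, Lemmas~\ref{lem3}--\ref{lem4} give $C(Q^{(i)})=2\brr(Y^{(i)})>0$, so each $y^{(i)}:=\sol_d(Q^{(i)})$ lies in the feasible polyhedron \eqref{fpdp}, and $\sol_d(Q)=\sum_i\lambda_i y^{(i)}$ with $\lambda_i=\brr(Y^{(i)})/\brr(Y_{2,Q_V})\in(0,1)$ and $\sum_i\lambda_i=1$. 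Since $\sol_d(Q)$ minimizes the linear objective of \eqref{dlpd} and each $y^{(i)}$ is feasible, each $y^{(i)}$ must also minimize it, and then \eqref{ps4} forces $\brr(\core(Y_1\times Y^{(i)}))=\sigma_d(Y_1)\brr(Y_1)\brr(Y^{(i)})$ for every $i$ while $|E Y^{(i)}|<|E Y_{2,Q_V}|$, again contradicting minimality. Hence $Y_{2,Q_V}$ is connected.

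For the size bound I would start from a vertex $y_V$ of \eqref{fpdp} at which the minimum of \eqref{dlpd} is attained (Theorem~A, cf. Lemma~\ref{lem5}). The primal problem has $n':=2^{|V_P Y_1|}$ variables (one $x_A$ per nonempty $A\subseteq V_P Y_1$, plus $x_s$), so the dual equality system has $n'$ rows; and since $\core(Y_1)=Y_1$ and $\FF=G_1*G_2$ has two free factors, every primary vertex of $Y_1$ has degree $2$, whence $|E Y_1|=4|V_P Y_1|$ and $n'=2^{|E Y_1|/4}$. Being a basic feasible solution, $y_V$ is determined by a nonsingular square submatrix of the constraint matrix, of order $r\le n'$, one of whose rows is the $x_s$-row (otherwise $y_V=0$, which is infeasible); each column of that matrix is some $q_j^L$, which has at most $k_j\le d$ nonzero $x_A$-coefficients — integers whose absolute values sum to $k_j$ — together with the $x_s$-coefficient $-(k_j-2)$, so its Euclidean length is below $\sqrt{d^2+(d-2)^2}<\sqrt2\,d$. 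By Cramer's rule (with $c=-x_s$ contributing a unit column) and Hadamard's inequality, a common denominator $C$ of the coordinates of $y_V$ may be taken with $C\le(\sqrt2\,d)^{n'}$ and $Cy_{V,j}\le(\sqrt2\,d)^{n'-1}$ for every $j$. By \eqref{49new}--\eqref{c4x} the combination $Q(y_V)$ in which each $q_j$ occurs $Cy_{V,j}$ times satisfies $\sol_d(Q(y_V))=y_V$ and $|Q(y_V)|=\sum_j Cy_{V,j}\le n'(\sqrt2\,d)^{n'-1}$; by Lemma~\ref{lem2} there is a graph $Y_2'$ with property (Bd) and $|V_S Y_2'|=|Q(y_V)|$, and by the last assertion of Lemma~\ref{lem4} the objective value of $\sol_d(\inq_d(Y_2'))$ is at most that of $y_V$, hence equals the minimum, so by \eqref{ps4} $Y_2'$ realizes the equality. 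Since every secondary vertex of $Y_2'$ has degree $\le d$ and every primary vertex has degree $2$, $|E Y_2'|=4|V_P Y_2'|=2\sum_{u\in V_S Y_2'}\deg u\le 2d\,|V_S Y_2'|\le 2d\,n'(\sqrt2\,d)^{n'-1}$. By minimality of $Y_{2,Q_V}$ we get $|E Y_{2,Q_V}|\le 2d\,n'(\sqrt2\,d)^{n'-1}$, and since $n'<2^{3n'/2}$ for $n'\ge1$ an elementary estimate gives $2d\,n'(\sqrt2\,d)^{n'-1}<(4d)^{n'}=2^{2^{|E Y_1|/4+\log_2\log_2(4d)}}$, as required.

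The hard part will be twofold. First, the simple-cycle components must be removed separately before the convex-combination/vertex argument can be run, because they correspond to combinations $Q^{(i)}$ with $C(Q^{(i)})=0$ and therefore give no point of the dual feasible polyhedron \eqref{fpdp}. Second, the size estimate closes only because every inequality of $\SLI_d[Y_1]$ is \emph{sparse} — it involves at most $d$ of the (exponentially many) variables $x_A$ — so Hadamard's inequality applied to the columns of the constraint matrix yields $(\sqrt2\,d)^{n'}$ rather than the useless $(\sqrt{n'}\,d)^{n'}$; one must also exploit the special form $c=-x_s$ of the objective to bound the numerators produced by Cramer's rule. Everything else is bookkeeping with Lemmas~\ref{Lm2}, \ref{Lm3} and \ref{lem2}--\ref{lem5}.
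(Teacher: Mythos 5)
Your proof is correct, and for the quantitative bound it follows the paper's route essentially verbatim: take a vertex $y_V$ of the dual feasible polyhedron \eqref{fpdp} attaining the minimum of \eqref{dlpd}, apply Cramer's rule and Hadamard's inequality using the sparsity of each column $q_j^L$ (at most $d$ nonzero $x_A$-entries plus the $x_s$-entry of absolute value $k_j-2$) and the bound $r\le n'=2^{|EY_1|/4}$ on the order of the basis matrix, then convert the resulting integer combination into a graph via Lemma~\ref{lem2} and Lemma~\ref{lem4}. Where you genuinely diverge is in the definition of $Y_{2,Q_V}$ and the connectedness argument. The paper fixes $Y_{2,Q_V}$ to be the graph built from the combination $Q_V$ with $\sol_d(Q_V)=y_V$ and coprime multiplicities $\eta(Q_V)$, and derives connectedness from the extreme-point property of $y_V$: a splitting $Y_3\sqcup Y_4$ with both reduced ranks positive exhibits $y_V$ as a proper convex combination of two feasible vectors, which must then all coincide, contradicting coprimality of $\eta(Q_V)$; the degenerate case $\brr(Y_4)=0$ needs a separate doubling trick with $2Q_V$. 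You instead take an edge-minimal realizer of the equality, dispose of reduced-rank-zero components directly (they are cycles contributing nothing to $\brr(\core(Y_1\times\,\cdot\,))$, so deleting them contradicts minimality), and note that if several positive-rank components remained, the optimal objective value at $\sol_d(\inq_d(Y_{2,Q_V}))$ would be a proper convex combination of the objective values at the feasible points coming from the components, so each component would already realize the equality with strictly fewer edges --- again contradicting minimality. This is arguably cleaner, since it avoids both the coprimality bookkeeping and the doubling trick, at the modest cost of having to transfer the Cramer--Hadamard size estimate to the minimal realizer through the minimality itself, which you do correctly. Both arguments are sound and yield the same bound $|EY_{2,Q_V}|<2^{2^{|EY_1|/4+\log_2\log_2(4d)}}$.
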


\begin{proof}  According to Lemma~\ref{lem5} and to Theorem~A,  we may assume that
the minimum of the dual LP-problem \eqref{dlpd}
is attained at a vertex $y_V$  of  the feasible polyhedron \eqref{fpdp}
 of \eqref{dlpd}.
\smallskip

It is convenient to switch back to the general LP and LSIP notation as was introduced
in the beginning of this Section.
In particular,  let  $A_{m_{\inq, d}}x \le b_{m_{\inq, d}}$ be the matrix form  of the system
\eqref{slid}.
Since $y_V$ is a vertex solution of the LP-problem   \eqref{dlpd} and \eqref{dlpd}  is stated in the form
$$
\min \{ b_{m_{\inq, d}}^{\top} y \mid A_{m_{\inq, d}}^{\top} y = c^{\top}, y \ge 0 \} ,
$$
it follows that the vertex solution $y_V$ will satisfy $m_{\inq, d}$  equalities among
$$
 A_{m_{\inq, d}}^{\top} y = c^{\top} ,  \quad y_j = 0,  \ \ j = 1, \ldots, {m_{\inq, d}} ,
 $$
 whose left hand side parts  are linearly independent (as formal linear combinations in variables  $y_1, \ldots, y_{m_{\inq, d}}$).
We call these ${m_{\inq, d}}$   equalities  {\em distinguished}.
\smallskip

The foregoing observation implies that there are $r$, $r \le {m_{\inq, d}}$, distinguished equalities in the system
$A^{\top}_{m_{\inq, d}} y = c^{\top}$ such that the submatrix $A_{m_{\inq, d}, r}^{\top}$ of $A_{m_{\inq, d}}^{\top}$,
consisting of the rows of $A_{m_{\inq, d}}^{\top}$ that correspond to the $r$ distinguished equalities, has the following property.
The rank of $A_{m_{\inq, d}, r}^{\top}$ is $r$ and deletion of the columns  of $A_{m_{\inq, d}, r}^{\top}$, that correspond to the variables $y_j$ that in turn correspond to
the distinguished equalities $y_j = 0$,  produces an $r\times r$  matrix  $A_{m_{\inq, d}, r \times r}^{\top}$ with
$\det A_{m_{\inq, d}, r \times r}^{\top} \ne 0$. Reordering the equalities in the system $A_{m_{\inq, d}}^{\top} y = c^{\top}$ and variables $y_j$  if necessary, we may assume that $A_{m_{\inq, d}, r }^{\top}$ consists of the first $r$ rows of $A_{m_{\inq, d}}^{\top}$ and
$A_{m_{\inq, d}, r \times r}^{\top}$ is an upper left submatrix of $A_{m_{\inq, d}}^{\top}$.
\smallskip

Let
$$
\bar y_V = (y_{V,1}, \dots,  y_{V,r})
$$
be the truncated version of $y_V$ consisting of the first $r$ components. It follows from the definitions that
$\bar y_V$ contains all nonzero components of $y_V$ and
$$
A_{m_{\inq, d}, r \times r}^{\top}  \bar y_V  =  \bar c^{\top} = (c_1, \dots,  c_r)^{\top} .
$$

Since $\sum_{j=1}^{{m_{\inq, d}}} y_{V,j} q_j^L  = - x_s$, it follows that $c_{i} =0$ if $c_{i}$ corresponds to  a variable $x_{B}$ and $c_i =-1$ if $c_i$ corresponds to the variable  $x_{s}$.
Since $y_V  \ne 0$ following from the definition of the LP-problem \eqref{dlpd},  we conclude that $\bar c^{\top} \ne 0$, i.e.,  one of $c_i$ is $-1$ and all other entries in  $\bar c^{\top}$ are equal to $0$.
\smallskip

Note that  every row of   $A_{m_{\inq, d}, r \times r}$  contains at most $d+1$ nonzero entries such that one is $-(k-2)$,
where $2 \le k \le d$ (this is the coefficient of $x_s$ that could be zero),  and the other nonzero entries have the same sign and their sum is at least $-d$ and at most $d$, see the definitions \eqref{inqa}--\eqref{inqb}.
Hence,  the standard Euclidian norm  of any row of $ A_{m_{\inq, d},  r \times r} $ is at most
$$
(d^2 + (d-2)^2)^{1/2} < 2d
$$
as $d \ge 3$. Hence, by the Hadamard's inequality, we have that
\begin{equation}\label{cr1}
| \det A_{m_{\inq, d}, r \times r} | <    (2d)^{r}  .
\end{equation}

Invoking the Cramer's rule, we further obtain that
\begin{equation}\label{cr2}
y_{V,j} = \frac{\det A_{m_{\inq, d}, r \times r, j}^{\top}(\bar c^{\top})}{  \det A_{m_{\inq, d}, r \times r}}   ,
\end{equation}
where  $ A_{m_{\inq, d}, r \times r, j}^{\top}(\bar c^{\top})$ is the matrix obtained from $A^{\top}_{m_{\inq, d}, r \times r}$ by replacing the $j$th column with $\bar c^{\top}$, $j=1, \ldots, r$.  Since $\bar c^{\top}$ has a unique nonzero entry which is
$-1$, we have  from the Hadamard's inequality,   similarly to \eqref{cr1}, that
 \begin{equation}\label{cr3}
| \det A_{m_{\inq, d}, r \times r, j}(\bar c^{\top}) | < (2d)^{r-1} .
\end{equation}

In view of \eqref{cr1}--\eqref{cr3},  we can see that there is a common denominator
$C >0$ for the rational numbers $y_{V,1}, \dots,  y_{V,r}$  that satisfies $C < (2d)^{r}$ and that the nonnegative integers
$C y_{V,1}, \ldots,  C  y_{V,r}$  are less than $(2d)^{r-1}$.
\smallskip

Hence, it follows from the definition of the function $\sol_d$, see also Lemma~\ref{lem4}, that if $Q_V$ is a combination such that $y_{V} = \sol_d(Q_V)$
and $| Q_V |$ is minimal with this property, i.e.,  the entries of $\eta(Q_V)$  are coprime,
then
\begin{equation}\label{cr3a}
| Q_V | < r (2d)^{r-1} .
\end{equation}
Recall that the cardinality $| Q |$ of a combination with repetitions $Q$ is
defined so that every $q \in Q$ is counted as many times as it occurs in $Q$.
\medskip

We now construct a graph $Y_{2, Q_V}$ from $Q_V$ as described in the proof of Lemma~\ref{lem2}.
Recall that if $ \inq_d(V_SY_{2, Q_V}) =  \wtl Q_V $ then $\wtl Q_V$ could be different from $Q_V$ but $|\wtl Q_V | = |Q_V|$ and $Y_{2, Q_V}$  could also be constructed by means of $\wtl Q_V $.
\smallskip

It follows from  the definitions and Lemmas~\ref{lem4},~\ref{lem5}
that if
$$
\wtl y_V :=  \sol_d(\inq_d(V_S Y_{2, Q_V}))
$$
then the minimum of the dual LP-problem \eqref{dlpd}
is also attained at $\wtl y_V$ and this minimum is equal to $-\sigma_d(Y_1) \brr(Y_1)$. Hence,
\begin{equation*}
\brr(\core( Y_1 \times Y_{2, Q_V})) = \sigma_d(Y_1) \brr(Y_1) \brr(Y_{2, Q_V}) .
\end{equation*}

Since $| V_S  Y_{2, Q_V} | = |Q_V|$ and the degree of every secondary vertex of $Y_{2, Q_V}$ is at most
$d$,  it follows from \eqref{cr3a} that
 \begin{equation}\label{cr4}
| E Y_{2, Q_V} | \le  2d | V_S Y_{2, Q_V} | =  2d  | Q_V |  < r (2d)^{r}  .
\end{equation}

Note that $r$ does not exceed  the total number $n_{\inq}$ of variables of $\SLI[Y_1]$.
Since every primary vertex of $Y_1$ has degree 2 and edges of $Y_1$ are oriented,  we have  $| E Y_1 | = 4 | V_P Y_{1} |$.
Since each variable $x_B$  of $\SLI[Y_1]$, different from $x_s$, is indexed with a nonempty set
$B \subseteq V_P Y_1$,  it follows that
\begin{align} \label{cr5}  
r  \le   n_{\inq}  \le   ( 2^{ | V_P Y_1 |  }-1) + 1  =  ( 2^{ | E Y_1 |/4 }-1) + 1  = 2^{ | E Y_1 |/4 }  .
\end{align}

Finally, we obtain from \eqref{cr4}--\eqref{cr5} that
 \begin{align}
 \begin{split}\label{cr5a}
| E Y_{2, Q_V} |  & <  r (2d)^{r}  \\
                & \le 2^{ | E Y_1 |/4 } \cdot  (2d)^{ 2^{ | E Y_1 |/4 }} \\
 & =   2^{ | E Y_1| /4  } \cdot  2^{  (\log_2 (2d) )   \cdot  2^{ | E Y_1 |/4  }  } \\
 & <   2^{  (\log_2 (2d) +1) \cdot  2^{ | E Y_1 |/4}  } \\
 & =  2^{ 2^{| E Y_1 |/4  + \log_2 \log_2 (4d)  } }   ,
 \end{split}
\end{align}
as desired.
\smallskip

It remains to show that the graph $Y_{2, Q_V}$ is connected.

Arguing on the contrary, assume that  the graph   $Y_{2,Q_V}$   is the disjoint union of its two subgraphs   $Y_3$ and $ Y_4$. First we assume that
\begin{equation}\label{case1}
\brr(Y_3) >0  \quad   \text{and}  \quad   \brr(Y_4) >0 .
\end{equation}

Clearly, $Y_3$ and $Y_4$ are  graphs with property (Bd).
Recall that the secondary vertices of the graph   $Y_{2,Q_V}$ bijectively correspond to the inequalities of the combination $Q_V$,
see the proof of Lemma~\ref{lem2}.  In particular, we can consider  the combinations $Q_{3}$ and $Q_{4}$, whose inequalities bijectively correspond to the  secondary  vertices of $Y_3$ and $Y_4$, resp.  It is clear that $Q_V$ is the union of the combinations $Q_{3}$ and $Q_{4}$ and
\begin{equation}\label{etaQ}
\eta(Q_V) = \eta( Q_{3} ) +  \eta(  Q_{4}) .
\end{equation}
We specify that by the union $B_1  \sqcup B_2$  of two combinations  $B_1, B_2$ we mean the combination whose elements are all elements of both $B_1  $ and $ B_2$, in particular, $|B_1  \sqcup B_2| =  |B_1| + |B_2|$.
\smallskip

Furthermore, the graphs  $Y_3$ and $Y_4$ could be
constructed from $Q_{3}$ and $Q_{4}$, resp., in  the same manner as $Y_{2,Q_V}$ was constructed from $Q_{V}$.
In particular,  the combinations  $Q_{3}$ and $Q_{4}$ belong to the domain of  the function $\sol_d$.
\smallskip

Invoking  Lemma~\ref{lem4}, denote $y_V(j)  := \sol_d( Q_{j} )$,  $j=3,4$.
We also denote
$$
\sum_{q \in Q_V}q^L =  - C(Q_V) x_s ,  \quad  \sum_{q \in Q_{j} } q^L =  - C(Q_{j}) x_s ,
$$
where $j = 3,4$.
\smallskip

Since $ Q_V =  Q_{3} \sqcup Q_{4}$, it follows that  $ C(Q_V) =  C(Q_{3}) + C(Q_{4})$.
According to the definition  \eqref{dfyy} of the function $\sol_d$, we have
\begin{equation}\label{etayX}
y_{V,i}  = \frac{\eta_i(Q_{V} ) }{ C(Q_{V}) } ,   \qquad   y_{V,i}(j)  = \frac{\eta_i(Q_{j}) }{ C(Q_{j})  }
\end{equation}
for all suitable $i, j$.
Hence, in view of  \eqref{etaQ}, for every $i = 1, \dots, m_{\inq, d}$, we obtain
\begin{align}\label{eqQC}
\begin{split}
y_{V,i} & = \frac{\eta_i(Q_{V} ) }{ C(Q_{V}) }   = \frac{\eta_i(Q_{3} )  +\eta_i(Q_{4} ) } { C(Q_{V}) }  \\
 & = \frac{C ( Q_{3} )  } { C(Q_{V}) } \cdot  \frac{\eta_i(Q_{3} )  } { C(Q_{3}) }      +
     \frac{C ( Q_{4} )  } { C(Q_{V}) }  \cdot   \frac{ \eta_i(Q_{4} ) } { C(Q_{4}) } \\
 & =  \lambda_3  y_{V, i}(3) +  \lambda_4  y_{V, i}(4)  ,
\end{split}
\end{align}
where $\lambda_3 =  \frac{C ( Q_{3} )  } { C(Q_{V}) } $ and  $\lambda_4 =  \frac{C ( Q_{4} )  } { C(Q_{V}) } $ are positive rational numbers that satisfy $\lambda_3+ \lambda_4 =1$.
\smallskip

The equalities  \eqref{eqQC} imply that
\begin{equation}\label{ylam}
y_V = \lambda_3  y_V(3) +  \lambda_4  y_V(4)  .
\end{equation}

Since $y_V$ is a vertex of the polyhedron  \eqref{fpdp},  $y_V(3)$ and $ y_V(4) $ are vectors in  \eqref{fpdp},    and $0 < \lambda_3, \lambda_4 < 1$,  $\lambda_3+ \lambda_4 =1$, it follows  from  \eqref{ylam}  that
$$
y_V(3) =  y_V(4) =  y_V .
$$
Hence, in view of \eqref{etayX}, the tuples $\eta(Q_V)$, $\eta( Q_{3} )$, $ \eta(  Q_{4})$ that have integer entries  are rational multiples of each other.
Referring to \eqref{etaQ}, we conclude that the entries  of $\eta(Q_V)$  are not coprime, contrary to the definition of the combination $Q_V$.  This contradiction completes the case  \eqref{case1}.
\medskip

We now assume that  the graph   $Y_{2,Q_V}$   is the disjoint union of its two subgraphs   $Y_3$ and $ Y_4$ such that
\begin{equation}\label{case2}
\brr(Y_3) >0  \quad   \text{and}  \quad   \brr(Y_4) = 0 .
\end{equation}
\smallskip

Let $2Q_V$ denote  the combination such that  $\eta(2Q_V) = 2 \eta(Q_V)$, i.e., to get $2Q_V$  from $Q_V$ we double the number of occurrences of each inequality in $Q_V$. Using this combination $2Q_V$, we can construct, as in the proof of Lemma~\ref{lem2}, a  graph $Y_{2, 2Q_V}$ which consists of two disjoint copies of $Y_{2,Q_V}$, denoted $\bar Y_{2,Q_V}$ and $\wht  Y_{2,Q_V}$. Since  $Y_{2,Q_V} = Y_3 \cup Y_4$, we can represent
the graph $Y_{2, 2Q_V}$ in the form
$$
Y_{2, 2Q_V} = Y_5    \cup Y_6,
$$
 where $ Y_5 :=  \bar Y_3  \cup \bar Y_4  \cup \wht Y_4$ and  $ Y_6 :=  \wht Y_3$
 \smallskip

Clearly, $\brr(Y_5) >0 $,  $\brr(Y_6) >0 $,   and  both  $Y_5,  Y_6$ have  property (Bd).
As above, we remark that the  secondary vertices of  $Y_{2, 2Q_V}$ are in bijective correspondence with the inequalities of  $2Q_V$. Hence,   the combination $2Q_V$ is the  union of the combinations $Q_{5}$ and $Q_{6}$ that consist of those inequalities that correspond to the  secondary vertices of   $Y_5$ and $Y_6$, resp., and that can be used to construct the graphs   $Y_5$ and $Y_6$ in the same manner as $Y_{2,Q_V}$ was constructed from $Q_V$.
\smallskip

As above, we can write
\begin{equation}\label{etaQa}
\eta(2Q_V) =    \eta( Q_{5} ) +  \eta(  Q_{6}) .
\end{equation}

Note that  the combinations  $Q_{5}$ and $Q_{6}$ belong to the domain of  the function $\sol_d$.
Using  Lemma~\ref{lem4}, denote $y_V(j)  := \sol_d( Q_{j} )$,  $j=5,6$.
As above, denote
$$
\sum_{q \in 2Q_V}q^L =  - C(2Q_V) x_s ,  \quad  \sum_{q \in Q_{j} } q^L =  - C(Q_{j}) x_s ,
$$
where $j = 5,6$.
\smallskip

Since $ 2Q_V =  Q_{5} \sqcup Q_{6}$, it follows that  $ C(2Q_V) =  C(Q_{5}) + C(Q_{6})$.
According to the definition  \eqref{dfyy} of the function $\sol_d$, we have
\begin{equation}\label{etaya}
y_{V,i}  = \frac{\eta_i(Q_{V} ) }{ C(Q_{V}) } = \frac{\eta_i(2Q_{V} ) }{ C(2Q_{V}) } ,  \qquad   y_{V,i}(j)  = \frac{\eta_i(Q_{j}) }{ C(Q_{j})  }
\end{equation}
for all suitable $i, j$.
Hence, in view of  \eqref{etaQa}, for every $i = 1, \dots, m_{\inq, d}$, we obtain
\begin{align}\label{eqQCa}
\begin{split}
y_{V,i} & = \frac{\eta_i(2Q_{V} ) }{ C(2Q_{V}) }   = \frac{\eta_i(Q_{5} )  +\eta_i(Q_{6} ) } { C(2Q_{V}) }  \\
 & = \frac{C ( Q_{5} )  } { C(2Q_{V}) } \cdot  \frac{\eta_i(Q_{5} )  } { C(Q_{5}) }      +
     \frac{C ( Q_{6} )  } { C(2Q_{V}) }  \cdot   \frac{ \eta_i(Q_{6} ) } { C(Q_{6}) } \\
 & =  \lambda_5  y_{V, i}(5) +  \lambda_6  y_{V, i}(6)  ,
\end{split}
\end{align}
where $\lambda_5 =  \frac{C ( Q_{5} )  } { C(2Q_{V}) } $ and  $\lambda_6 =  \frac{C ( Q_{6} )  } { C(2Q_{V})}$ are positive rational numbers that satisfy $\lambda_5+ \lambda_6 =1$.
\smallskip

The equalities  \eqref{eqQCa} imply that
\begin{equation}\label{ylama}
y_V = \lambda_5  y_V(5) +  \lambda_6  y_V(6)  .
\end{equation}

Since $y_V$ is a vertex of the polyhedron  \eqref{fpdp}, $y_V(5)$ and $ y_V(6) $ are vectors in the polyhedron  \eqref{fpdp},     and $0 < \lambda_5, \lambda_6 < 1$,  $\lambda_5+ \lambda_6 =1$, it follows  from  \eqref{ylama}  that
$$
y_V(5) =  y_V(6) =  y_V.
$$
Hence, in view of \eqref{etaya}, the tuples $\eta(2Q_V)$, $\eta( Q_{5} )$, $ \eta(  Q_{6})$ that have integer entries are rational multiples of each other.
Referring to \eqref{etaQa} and keeping in mind that   the  entries  of $\eta(Q_V) $  are  coprime, we conclude that  \begin{equation}\label{QVe}
\eta(Q_V) = \eta( Q_{5} ) = \eta(  Q_{6}) ,
\end{equation}
i.e., $Q_V = Q_{5} = Q_{6}$.   However, $Y_6 = \wht Y_3$ and $\wht  Y_3$ is a subgraph of $\wht  Y_{2,Q_V}$ that consists of several connected components of $\wht  Y_{2,Q_V}$ and  $\wht  Y_3 \ne \wht   Y_{2,Q_V}$. Hence, $ Q_{5} \ne  Q_V $.
This contradiction to \eqref{QVe} completes the second case \eqref{case2}. Thus  the graph
 $Y_{2,Q_V}$ is connected. The proof of
Lemma~\ref{lem6} is complete.   \end{proof}

\section{More Lemmas}

We now let $\FF = \prod_{\al \in I}^* G_\al$ be  an arbitrary free product of nontrivial groups  $G_\al$,  $\al \in I$, and $|I| >1$. Let $H$ be a finitely generated factor-free subgroup of  $\FF$. As in Section~2, let $\Psi_o(H)$ denote an irreducible
$\A$-graph  of $H$, where $\A =   \bigcup_{\al \in I} G_\al$,  with the base vertex $o$ and
let $\Psi(H)$ denote the core of  $\Psi_o(H)$.
\smallskip

Let $I(H)$ denote a subset of  the index set $I$ such that $\al \in I(H)$  if and only if  there is a secondary vertex $u \in V_S \Psi(H)$ of type $\al$. Since $H$ is finitely generated,  it follows that the set  $I(H)$ is finite.
\smallskip

Let us fix a finitely generated factor-free subgroup $H_1$ of  $\FF$ with positive reduced rank
$\brr(H_1) = -\chi(\Psi(H_1)) >0$.
\smallskip

We say that a  finitely generated factor-free subgroup $H_2$ of  $\FF$ has  {\em property } (B) (relative to $H_1$)  if the core graph $\Psi(H_2)$ of $H_2$ has the original property (B) in which the graphs $Y_1$ and $Y_2$ are replaced with core graphs $\Psi(H_1)$ and $\Psi(H_2)$, resp., i.e., $\brr(H_2) = -\chi(\Psi(H_2)) >0$  and  the map
$$
\tau_2 : \core (\Psi(H_1) \times \Psi(H_2)) \to  \Psi(H_2)
$$
is surjective.
\smallskip

Let $d \ge 3$ be an integer.
Analogously, we   say that a  finitely generated factor-free subgroup $H_2$ of  $\FF$ has  {\em property } (Bd)   (relative to $H_1$)  if the core graph $\Psi(H_2)$ of $H_2$ has the original property (Bd) in which the graphs $Y_1$ and $Y_2$ are replaced with core graphs $\Psi(H_1)$ and $\Psi(H_2)$, resp., i.e.,
$$
\brr(H_2) = -\chi(\Psi(H_2)) >0,   \quad   \deg \Psi(H_2)  \le d
$$
and  the map $\tau_2 : \core (\Psi(H_1) \times \Psi(H_2)) \to  \Psi(H_2)$
is surjective.
\smallskip

Recall that if   $\Gamma$ is a  finite graph then  $\deg \Gamma$ is the maximum degree of a vertex of $\Gamma$.

\begin{lem}\label{lemBd} Suppose $H_2$ is a  finitely generated factor-free subgroup of  $\FF$ such that
$\deg \Psi(H_2) \le d$,  where $d \ge 3$ is an integer or $d = \infty$,  $\brr(H_2) = -\chi(\Psi(H_2)) >0$, and the map
$$
\tau_2 :  \core (\Psi(H_1) \times \Psi(H_2)) \to  \Psi(H_2)
$$
is not surjective. Then there exists a finitely generated factor-free subgroup $H_4$ of  $\FF$ with property (Bd) if $d < \infty$ or
with property (B) if $d = \infty$  such that
\begin{equation}\label{bol}
    \frac{\brr(H_1, H_4) }{  \brr(H_4) }  >  \frac{\brr(H_1, H_2) }{  \brr(H_2) }  . \end{equation}
\end{lem}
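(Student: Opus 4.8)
The plan is to excise from $\Psi(H_2)$ everything that the fiber product does not touch, and then pass to one connected component of what is left. Write $Y:=\core(\Psi(H_1)\times\Psi(H_2))$, so that $\brr(H_1,H_2)=-\chi(Y)$ by Lemma~\ref{Lm3}; one may assume $-\chi(Y)>0$, since otherwise the right-hand side of \eqref{bol} is $0$, in which case $Y$ is a disjoint union of cycles and no subgroup with property (Bd) can be produced anyway — but this degenerate situation does not arise in the applications, where the ratios in question are positive. Put $\Psi':=\tau_2(Y)$, a subgraph of $\Psi(H_2)$; it is proper because $\tau_2$ is not surjective, and nonempty because $-\chi(Y)>0$. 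Since $\tau_2$ is locally injective, distinct edges of $Y$ with a common initial vertex have distinct $\tau_2$-images; as $Y$ is a core graph, it follows that every vertex of $\Psi'$ has degree $\ge 2$ in $\Psi'$, so $\core(\Psi')=\Psi'$. Being a subgraph of the irreducible graph $\Psi(H_2)$, the graph $\Psi'$ is irreducible, and $\deg\Psi'\le\deg\Psi(H_2)\le d$.

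I would then establish two facts. First, $-\chi(\core(\Psi(H_1)\times\Psi'))\ge-\chi(Y)=\brr(H_1,H_2)$, and the projection of $\core(\Psi(H_1)\times\Psi')$ onto $\Psi'$ is surjective. The idea is that the construction of $\core(\Psi(H_1)\times\Psi(H_2))$ uses edges of the second factor only inside the subgraph $\tau_2(Y)=\Psi'$, so replacing $\Psi(H_2)$ by $\Psi'$ destroys no edge or identification occurring in $Y$; carefully unwinding this yields a map of $\core(\Psi(H_1)\times\Psi')$ onto $Y$ that only collapses secondary vertices, which gives both the Euler-characteristic inequality and exhibits $\Psi'$ as the image of $\tau_2$. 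Second, $-\chi(\Psi')<-\chi(\Psi(H_2))=\brr(H_2)$. Here $\Psi(H_2)=\core(\Psi_o(H_2))$ is connected — the core of a connected graph is connected, since any two cyclically reduced closed paths can be spliced through a joining path into a single one — while $\Psi'$ is a nonempty proper subgraph all of whose vertices have degree $\ge 2$. Comparing the handshake identities for $\Psi(H_2)$ and $\Psi'$: if $\delta_E$ and $\delta_V$ count the edges, resp.\ vertices, of $\Psi(H_2)$ outside $\Psi'$, then $2\delta_E\ge 2\delta_V$, with equality only if no edge outside $\Psi'$ meets $\Psi'$; connectedness of $\Psi(H_2)$ together with $\varnothing\ne\Psi'\subsetneq\Psi(H_2)$ rules this out, so $\delta_E>\delta_V$, i.e.\ $-\chi(\Psi')<-\chi(\Psi(H_2))$.

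Now let $C_1,\dots,C_m$ be the connected components of $\Psi'$. Since the fiber product distributes over disjoint unions in the second coordinate, $-\chi(\core(\Psi(H_1)\times\Psi'))=\sum_i-\chi(\core(\Psi(H_1)\times C_i))$ and $-\chi(\Psi')=\sum_i-\chi(C_i)$. If $-\chi(C_i)=0$ then $C_i$ is a cycle, hence the core graph of a cyclic subgroup, so all of its generalized intersections with $H_1$ are cyclic and $-\chi(\core(\Psi(H_1)\times C_i))=0$ by Lemma~\ref{Lm3}; discarding these indices and setting $S=\{i:-\chi(C_i)>0\}$ — which is nonempty because $-\chi(\Psi')>0$ — one gets
\[
\max_{i\in S}\frac{-\chi(\core(\Psi(H_1)\times C_i))}{-\chi(C_i)}\ \ge\ \frac{\sum_{i\in S}-\chi(\core(\Psi(H_1)\times C_i))}{\sum_{i\in S}-\chi(C_i)}\ =\ \frac{-\chi(\core(\Psi(H_1)\times\Psi'))}{-\chi(\Psi')}\ \ge\ \frac{-\chi(Y)}{-\chi(\Psi')}\ >\ \frac{-\chi(Y)}{-\chi(\Psi(H_2))}\ =\ \frac{\brr(H_1,H_2)}{\brr(H_2)},
\]
the last strict inequality using $0<-\chi(\Psi')<-\chi(\Psi(H_2))$ and $-\chi(Y)>0$. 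Pick $i^*\in S$ realizing the maximum and let $H_4$ be a finitely generated factor-free subgroup of $\FF$ with $\Psi_o(H_4)=\Psi(H_4)=C_{i^*}$, which exists by Lemma~\ref{Lm2} (take the base vertex to be any primary vertex of $C_{i^*}$, which has no vertex of degree $1$). Then $\brr(H_4)=-\chi(C_{i^*})>0$, $\deg\Psi(H_4)\le d$, $\core(\Psi(H_4))=\Psi(H_4)$, and $\tau_2:\core(\Psi(H_1)\times C_{i^*})\to C_{i^*}$ is onto by the same analysis of the fiber product used in the first fact (with $C_{i^*}$ in place of $\Psi'$), so $H_4$ has property (Bd) (respectively (B) when $d=\infty$). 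Finally, by Lemma~\ref{Lm3},
\[
\frac{\brr(H_1,H_4)}{\brr(H_4)}=\frac{-\chi(\core(\Psi(H_1)\times C_{i^*}))}{-\chi(C_{i^*})}>\frac{\brr(H_1,H_2)}{\brr(H_2)},
\]
which is \eqref{bol}.

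The step I expect to require the most care is the first fact above: a naive argument gives only $-\chi(\core(\Psi(H_1)\times\Psi'))\le-\chi(Y)$, whereas the reverse inequality (and the surjectivity of $\tau_2$ onto $\Psi'$) has to be read off from the precise bookkeeping of primary and secondary vertices in the construction of the fiber product, using that a cyclically reduced closed path always lies inside the core and hence projects into $\tau_2(Y)=\Psi'$. Everything else is Euler-characteristic arithmetic together with the correspondence between irreducible core graphs and finitely generated factor-free subgroups furnished by Lemmas~\ref{Lm1}--\ref{Lm2}.
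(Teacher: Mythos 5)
Your main line of argument is the paper's own: set $\Gamma:=\tau_2(\core(\Psi(H_1)\times\Psi(H_2)))$, observe that $\core(\Gamma)=\Gamma$, that $\brr(\core(\Psi(H_1)\times\Gamma))$ is at least (in fact equal to) $\brr(H_1,H_2)$ while $\brr(\Gamma)<\brr(\Psi(H_2))$, so the ratio strictly increases, and then pass to a connected component of $\Gamma$ with positive reduced rank that maximizes the ratio, invoking Lemma~\ref{Lm2} to realize it as $\Psi(H_4)$. Your handshake-lemma justification of $-\chi(\Gamma)<-\chi(\Psi(H_2))$ and your explicit discarding of components that are cycles are fine and merely fill in details the paper leaves to the reader.

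There is, however, one genuine gap: the case $\brr(H_1,H_2)=-\chi(\core(\Psi(H_1)\times\Psi(H_2)))=0$. The hypotheses of the lemma do not exclude it, so it must be handled, and your dismissal (``does not arise in the applications'') is not a proof; moreover your parenthetical claim that ``no subgroup with property (Bd) can be produced anyway'' is false. The paper disposes of this case in one line by taking $H_4=H_1$: then the right-hand side of \eqref{bol} is $0$ while $\brr(H_1,H_1)\ge\brr(H_1\cap H_1)=\brr(H_1)>0$, so the inequality is strict. (If one is being scrupulous about $\deg\Psi(H_1)$ possibly exceeding $d$, one instead takes the subgraph $Y_{1,d}\subseteq\Psi(H_1)$ with property (Bd) produced at the start of the proof of Lemma~\ref{lem5}; its ratio is still positive.) With that one case added, your argument is complete and coincides with the paper's.
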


\begin{proof} Recall that  $ \brr(  H_1, H_2 )=   \brr( \core (\Psi(H_1) \times \Psi(H_2)) )$ and
$ \brr(  H_i) = \brr(\Psi(  H_i))$, $i=1,2$.
If $\brr( \core (\Psi(H_1) \times \Psi(H_2)) ) = 0$, then we may take $H_4 = H_1$ and the inequality \eqref{bol} holds.  Assume that $\brr( \core (\Psi(H_1) \times \Psi(H_2)) ) > 0$ and that the map
$$
\tau_2 : \core (\Psi(H_1) \times \Psi(H_2)) \to  \Psi(H_2)
$$
is not surjective. Consider the subgraph
$\Gamma := \tau_2 (\core (\Psi(H_1) \times \Psi(H_2)) ) $ of $\Psi(H_2)$. It follows from the definitions and assumptions that $\brr(\Gamma) < \brr (\Psi(H_2))$ and
$$
\brr(  \core (\Psi(H_1) \times \Gamma )  ) = \brr(  \core (\Psi(H_1) \times \Psi(H_2))  ) >0,
$$
whence $ \brr( \Gamma ) > 0$. It is also clear that $\core(\Gamma  ) = \Gamma $.
Therefore,
\begin{equation}\label{e52}
\frac{\brr( \core (  \Psi(H_1) \times \Gamma   ))  }{  \brr(\Gamma ) }  > \frac{\brr( \core (  \Psi(H_1) \times \Psi(H_2)   ))  }{  \brr(\Psi(H_2) ) } .
\end{equation}

Let $\Gamma_1, \dots, \Gamma_k$  be  connected components of the graph $\Gamma $.
Since
$$
\brr( \core (  \Psi(H_1) \times \Gamma )) >0 ,
$$ it follows  that  $\brr( \Gamma ) >0$.
Note that the graph
$$
\core (  \Psi(H_1) \times \Gamma )
$$
consists of disjoint graphs $\core (  \Psi(H_1) \times \Gamma_j )$, $j=1, \dots, k$.
In particular,
$$
\brr(\Gamma) = \sum_{j=1}^k \brr(\Gamma_j) , \qquad   \brr( \core (  \Psi(H_1) \times \Gamma )) = \sum_{j=1}^k \brr( \core (  \Psi(H_1) \times \Gamma_j )) ,
$$
 hence,
\begin{equation}\label{e53}
\frac{\brr( \core (  \Psi(H_1) \times \Gamma   ))  }{  \brr(\Gamma ) } =  \frac{ \sum_{j=1}^k \brr( \core (  \Psi(H_1) \times \Gamma_j )) }{     \sum_{j=1}^k \brr(  \Gamma_j )      } .
\end{equation}

Note that if $ \brr(  \Gamma_j )  =0$ then $ \brr( \core (  \Psi(H_1) \times \Gamma_j )) =0$.
\smallskip

Let $\Gamma_{j^*}$ be chosen so that $\brr(\Gamma_{j^*}) >0$ and the ratio
$$
\frac{  \brr( \core ( \Psi(H_1) \times \Gamma_{j^*} )) } {  \brr( \Gamma_{j^*} )      }
$$
is maximal over those graphs $\Gamma_j$ with   $ \brr(  \Gamma_j )  >0$.  It follows from
$$
\brr(\Gamma) = \sum_{j=1}^k \brr(\Gamma_j) >0
$$
that such $j^*$ does exist.  It is not difficult to see that
$$
 \frac{ \sum_{j=1}^k \brr( \core (  \Psi(H_1) \times \Gamma_j )) }{     \sum_{j=1}^k \brr(  \Gamma_j )      }
  \le     \frac{  \brr( \core ( \Psi(H_1) \times \Gamma_{j^*} )) } {  \brr( \Gamma_{j^*} )      } .
  $$
This, together with \eqref{e52} and \eqref{e53},  implies that
$$
 \frac{  \brr( \core ( \Psi(H_1) \times \Gamma_{j^*} )) } {  \brr( \Gamma_{j^*} )      }  \ge
  \frac{  \brr( \core ( \Psi(H_1) \times \Gamma)) } {  \brr( \Gamma ) } >
  \frac{  \brr( \core ( \Psi(H_1) \times \Psi(H_2))) } {  \brr( \Psi(H_2) ) } .
$$

Hence, picking an arbitrary
primary vertex $v \in V_P \Gamma_{j^*}$ in $\Gamma_{j^*}$ as a base vertex, and letting $H_4 := H(\Gamma_{j^*, v})$, as in Lemma~\ref{Lm2}, we obtain a subgroup $H_4$ with the desired inequality \eqref{bol}.
\end{proof}

\begin{lem}\label{lemsup} The supremum
$$
\sup_{H_3} \bigg\{ \frac{ \brr(   H_1, H_3   )  }{  \brr(H_3 ) } \bigg\}
$$
over all finitely generated factor-free subgroups $H_3$ of  $\FF$ such that  $\brr(H_3 ) >0$ and $\deg  \Psi(H_3) \le d$,
where $d \ge 3$ is an integer or $d = \infty$,
is equal to  $\sup_{H_2} \bigg\{ \dfrac{ \brr(   H_1, H_2   )  }{  \brr(H_2 ) } \bigg\}$
over all finitely generated factor-free subgroups $H_2$ of  $\FF$ that possess  property (Bd) when $d < \infty$ or property (B) when  $d = \infty$,  and satisfy the condition $I(H_2) \subseteq I(H_1)$. In particular, we have
\begin{align*}
\sigma_d(H_1)\brr(H_1 )  &   = \sigma_d( \Psi(H_1))\brr(\Psi(H_1)) ,  \\
\sigma(H_1)\brr(H_1)     &   =  \sigma( \Psi(H_1)) \brr(\Psi(H_1)) .
\end{align*}
\end{lem}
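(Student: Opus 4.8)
The plan is to show the two displayed suprema coincide by proving both inequalities, and then to read off the ``in particular'' equalities by matching each supremum with the appropriate definition.

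First I would record the remark that if a finitely generated factor-free subgroup $H_2$ of $\FF$ has property (B) relative to $H_1$, then automatically $I(H_2)\subseteq I(H_1)$: the surjection $\tau_2:\core(\Psi(H_1)\times\Psi(H_2))\to\Psi(H_2)$ carries secondary vertices to secondary vertices and preserves their types, so a secondary vertex of $\Psi(H_2)$ of type $\al$ lifts to one of type $\al$ in the fiber product, whose $\tau_1$-image is a secondary vertex of type $\al$ in $\Psi(H_1)$, forcing $\al\in I(H_1)$. Hence the subgroups $H_2$ occurring in the right-hand supremum are exactly those with property (Bd) (with (B) when $d=\infty$), the condition $I(H_2)\subseteq I(H_1)$ being redundant. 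The inequality ``right-hand $\le$ left-hand'' is then immediate, since any $H_2$ with property (Bd) has $\brr(H_2)>0$ and $\deg\Psi(H_2)\le d$, so it is one of the admissible $H_3$ and the corresponding ratios coincide verbatim.

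For the reverse inequality I would fix an arbitrary finitely generated factor-free $H_3$ with $\brr(H_3)>0$ and $\deg\Psi(H_3)\le d$ and bound $\brr(H_1,H_3)/\brr(H_3)$ by the right-hand supremum. If $\tau_2:\core(\Psi(H_1)\times\Psi(H_3))\to\Psi(H_3)$ is surjective, then $H_3$ itself has property (Bd) and is therefore admissible on the right, so the bound is clear. If $\tau_2$ is not surjective, Lemma~\ref{lemBd} produces a finitely generated factor-free $H_4$ with property (Bd) (with (B) when $d=\infty$) and $\brr(H_1,H_4)/\brr(H_4)>\brr(H_1,H_3)/\brr(H_3)$; since $H_4$ is admissible on the right, again $\brr(H_1,H_3)/\brr(H_3)$ lies below the right-hand supremum. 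Taking the supremum over $H_3$ gives ``left-hand $\le$ right-hand'', so the two suprema are equal.

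It then remains to identify the two suprema with the stated products. Because $\KKd$ (resp. $\KK$) is exactly the family of finitely generated factor-free subgroups $H$ with $\brr(H)>0$ and $\deg\Psi(H)\le d$ (resp. with $\brr(H)>0$), the definitions \eqref{dfy0}--\eqref{dfy0d} identify $\sigma_d(H_1)\brr(H_1)$ (resp. $\sigma(H_1)\brr(H_1)$) with the left-hand supremum for finite $d$ (resp. for $d=\infty$). On the other side, using $\brr(H_1,H_2)=\brr(\core(\Psi(H_1)\times\Psi(H_2)))$ (Lemma~\ref{Lm3}) and $\brr(H_2)=-\chi(\Psi(H_2))$ (Lemma~\ref{Lm2}), the right-hand supremum equals the supremum of $\brr(\core(\Psi(H_1)\times Y_2))/\brr(Y_2)$ over connected finite irreducible $\A$-graphs $Y_2$ with property (Bd): every such $Y_2$ is the irreducible core graph of the subgroup obtained by choosing a base vertex in $Y_2$ and applying Lemma~\ref{Lm2} (using uniqueness of irreducible core graphs), and conversely $\Psi(H_2)$ is always a connected irreducible core graph with property (Bd) when $H_2$ has it. Finally, for a possibly disconnected $Y_2$ with property (B), its components of positive reduced rank each inherit property (Bd), while those of zero reduced rank contribute nothing to either $\brr(\core(\Psi(H_1)\times Y_2))$ or $\brr(Y_2)$; since both of these are sums over components of the corresponding quantities, the mediant inequality $\tfrac{a_1+\cdots+a_m}{b_1+\cdots+b_m}\le\max_j\tfrac{a_j}{b_j}$ shows that allowing disconnected $Y_2$ does not change the supremum, which therefore equals $\sigma_d(\Psi(H_1))\brr(\Psi(H_1))$ (resp. $\sigma(\Psi(H_1))\brr(\Psi(H_1))$) in the sense of Section~4. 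Combined with the equality of the two suprema, this yields the asserted identities. The substantive input is Lemma~\ref{lemBd}, which is already available; the only genuinely delicate part is the bookkeeping that matches up the three families over which the suprema are taken — subgroups in $\KKd$, subgroups with property (Bd), and $\A$-graphs with property (Bd) — especially the passage between connected and disconnected admissible graphs and the automatic inclusion $I(H_2)\subseteq I(H_1)$.
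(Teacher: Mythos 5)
Your proof is correct and follows essentially the same route as the paper, which likewise derives the equality of the two suprema from Lemma~\ref{lemBd} together with the observation that surjectivity of $\tau_2$ forces $I(H_2)\subseteq I(H_1)$, and then reads off the displayed identities from the definitions of $\sigma_d$, $\sigma$ for subgroups and for graphs. You merely spell out the bookkeeping (the two one-sided inequalities, and the passage between connected core graphs of subgroups and possibly disconnected graphs with property (Bd) via the mediant inequality) that the paper leaves implicit.
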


\begin{proof}  The first claim follows from Lemma~\ref{lemBd} and the observation that
if the map
$$
\tau_2 :  \core (\Psi(H_1) \times \Psi(H_2)) \to  \Psi(H_2)
$$
is surjective  then $I(H_2) \subseteq I(H_1)$. The equalities follow from the first claim, the definitions of the numbers  $\sigma_d(H_1)$, $\sigma(H_1)$,
 $\sigma_d(\Psi(H_1))$, $\sigma(\Psi(H_1))$,  and  Lemma~\ref{lemBd}.
\end{proof}

In view of  Lemma~\ref{lemsup}, when investigating the supremum
 $$
 \sup_{H_3}  \bigg\{ \frac{ \brr(   H_1, H_3  )  }{  \brr(H_3 ) }  \bigg\}
 $$
over all finitely generated factor-free subgroups $H_3$ of  $\FF$ with
$\brr(H_3 ) >0$  and $\deg \Psi(H_3)  \le d$,  we may assume that the index set $I$
is finite, i.e., $I= I(H_1)$,  say, $I = \{ 1, \dots, m\}$,
and so  $\FF= G_1  *  G_2  * \ldots * G_{m}$.
\smallskip

Furthermore, in order to be able to make use of results of Sections~3--4,
we consider $\FF$ as the following  free product
$$
\FF_2(1) =   G_1 *  G(2, m)
$$
of two groups $G_1$ and  $G(2, m) := G_2  * \ldots * G_{m}$. Let $g_\al \in G_\al$ be some nontrivial element of  $G_\al$, $\al \in I = \{ 1, \dots, m \}$. For every  $a_\al \in G_\al$,  consider the map
\begin{equation}\label{map2}
a_\al \mapsto
  (g_{\al+1} \ldots  g_{m}  g_{1} \ldots  g_{\al })^{-1}  a_\al   g_{\al +1} \ldots  g_{m}  g_{1} \ldots   g_{\al}   ,
\end{equation}
where $g_{\al+1 }  \ldots   g_{m}  g_{1} \ldots  g_{\al }$ is a cyclic permutation of the
word $g_{1}  g_{2}  \ldots g_{m}$.
\smallskip

Recall that a subgroup $K$ of a group $G$ is called {\em antinormal}
if, for every $g \in G$, $g K g^{-1} \cap K \ne \{ 1\}$ implies $g \in K$.

\begin{lem}\label{lemmap2} Let $|I| = m \ge 3$ and let $H_1$ be a finitely generated factor-free subgroup of $\FF$.  Then the map \eqref{map2} extends to monomorphisms
 $$
 \mu : \FF \to \FF , \qquad \mu_2 : \FF \to  \FF_2(1)
 $$
 that have the following properties.

\begin{enumerate}

\item[(a)]  A word $U \in \FF$ with $|U| >1$ is cyclically reduced if and only if $\mu(U)$ is cyclically reduced.

\item[(b)]   The subgroups $\mu_2(\FF )$ and   $\mu(\FF )$ are  antinormal in $\FF_2(1)$ and $\FF$, resp.

\item[(c)]   $\mu_2(H_1)$ is a factor-free subgroup of $\FF_2(1)$ and $\mu(H_1)$ is factor-free in $\FF$.  Furthermore, $\deg \Psi(H_1) = \deg  \Psi(\mu_2(H_1))$.

\item[(d)]   If $K_1$ and $ K_2$ are  finitely generated  factor-free subgroups of $\FF$, then
$$\brr(K_1, K_2)   =    \brr(  \mu_2(K_1) , \mu_2(K_2) ) .$$

\item[(e)]    The supremum
$$
\sup_{H_2}  \bigg\{ \frac{ \brr(  H_1, H_2   )  }{  \brr(H_2 ) }  \bigg\}
$$
over all finitely generated factor-free subgroups $H_2$ of $\FF$
such that  $\brr(H_2) > 0$  and $ \deg  \Psi(H_2) \le d$,  where $d \ge 3$ is an integer,  does not exceed
the supremum
$$
\sup_{K_2} \bigg\{  \frac{ \brr(  \mu_2( H_1), K_2   )  }{  \brr(K_2 ) }  \bigg\}
$$
over all finitely generated factor-free subgroups $K_2$ of  $\FF_2(1)$ with property (Bd) relative to $\mu_2( H_1)$.
In particular,
$$
\sigma_d(H_1) \le  \sigma_d( \mu_2( H_1)) \quad   \text{ and} \quad   \sigma(H_1) \le  \sigma( \mu_2( H_1)) .
$$
\end{enumerate}
\end{lem}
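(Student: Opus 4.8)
The plan is to base the whole lemma on one explicit computation with the fixed words $c_\al$. Write $w:=g_1\cdots g_m$ and, for $\al\in I=\{1,\dots,m\}$, let $c_\al:=g_{\al+1}\cdots g_m g_1\cdots g_\al$ be the cyclic rotation of $w$ occurring in \eqref{map2}; each $c_\al$ is a reduced, cyclically reduced word of syllable length $m$ beginning with the $G_{\al+1}$-letter $g_{\al+1}$ and ending with the $G_\al$-letter $g_\al$. Since $a_\al\mapsto c_\al^{-1}a_\al c_\al$ is the restriction to $G_\al$ of conjugation by $c_\al$, it is a homomorphism $G_\al\to\FF$, and the universal property of the free product produces a homomorphism $\mu\colon\FF\to\FF$ and, because $\FF$ and $\FF_2(1)$ coincide as a group (free products being associative), a homomorphism $\mu_2\colon\FF\to\FF_2(1)$, both given by \eqref{map2}. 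The key claim, which I would verify junction by junction using $m\ge2$ and $\al_i\ne\al_{i+1}$, is that for a reduced word $U=a_{\al_1}\cdots a_{\al_k}$ the product $c_{\al_1}^{-1}a_{\al_1}c_{\al_1}c_{\al_2}^{-1}a_{\al_2}c_{\al_2}\cdots c_{\al_k}^{-1}a_{\al_k}c_{\al_k}$, which equals $\mu(U)$, is already a reduced word, of syllable length $(2m+1)k$, with first syllable $g_{\al_1}^{-1}\in G_{\al_1}$ and last syllable $g_{\al_k}\in G_{\al_k}$. This yields injectivity of $\mu$ and $\mu_2$, and part~(a): for reduced $U$ with $|U|>1$, $U$ is cyclically reduced iff $\al_1\ne\al_k$ iff the first and last syllables of $\mu(U)$ lie in distinct factors iff $\mu(U)$ is cyclically reduced. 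Two consequences will be used repeatedly: the $\mu$-image of a cyclically reduced word of syllable length $\ge2$ is cyclically reduced of syllable length $\ge2$, hence not conjugate into any factor; and the nontrivial reduced words of $\mu(\FF)$ are exactly those of the rigid shape $c_{\gamma_1}^{-1}d_{\gamma_1}c_{\gamma_1}c_{\gamma_2}^{-1}d_{\gamma_2}c_{\gamma_2}\cdots c_{\gamma_k}^{-1}d_{\gamma_k}c_{\gamma_k}$ with $d_{\gamma_i}\in G_{\gamma_i}\setminus\{1\}$ and $\gamma_i\ne\gamma_{i+1}$.

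For part~(b) — antinormality of $\mu(\FF)$ in $\FF$, equivalently of $\mu_2(\FF)$ in $\FF_2(1)$ — suppose $g^{-1}\mu(x)g=\mu(y)$ with $x,y\ne1$; I must show $g\in\mu(\FF)$. Replacing $x$ (and $g$) by a conjugate, I may take $x$ of minimal length in its conjugacy class. If $x\in G_\al$, then $\mu(x)=c_\al^{-1}xc_\al$ is conjugate to $x\in G_\al$, so $\mu(y)$ is conjugate into $G_\al$; by the first consequence $y$ must be conjugate to some $y_0\in G_\beta$, the conjugacy theorem for free products forces $\beta=\al$, and the standard malnormality of the free factor $G_\al$ pins the conjugator down to give $g^{-1}\in\mu(v^{-1})\,c_\al^{-1}G_\al c_\al=\mu(v^{-1})\mu(G_\al)\subseteq\mu(\FF)$, so $g\in\mu(\FF)$. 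Otherwise $x$, and similarly $y$, may be taken cyclically reduced of length $\ge2$; then $\mu(x),\mu(y)$ are cyclically reduced of equal length, so $\mu(y)$ is a cyclic permutation of the reduced word $\mu(x)$, and by the conjugacy structure in free products $g=zp$ with $z\in C_\FF(\mu(x))$ and $p$ the syllable-prefix of $\mu(x)$ realizing the permutation. Here $C_\FF(\mu(x))=\langle\mu(x_0)\rangle$, where $x_0$ is the root of $x$: one checks $\mu(x_0)$ is not a proper power by a finite case analysis of the type sequence of its reduced form (the blocks $c_\al,c_\al^{-1}$ contribute rigid runs of $m$ types increasing, resp. decreasing, by $1$), the borderline cases all forcing $\al+1\equiv\al-1\pmod m$, impossible for $m\ge3$; hence $z\in\langle\mu(x_0)\rangle\subseteq\mu(\FF)$. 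Finally the same rigidity shows that the only cyclic permutations of $\mu(x)$ lying in $\mu(\FF)$ are the $k$ words obtained by cutting the cyclic word $a_{\al_1}(c_{\al_1}c_{\al_2}^{-1})a_{\al_2}\cdots a_{\al_k}(c_{\al_k}c_{\al_1}^{-1})$ at the midpoint of one of its $k$ bridges $c_{\al_i}c_{\al_{i+1}}^{-1}$, i.e. the words $\mu(a_{\al_{i+1}}\cdots a_{\al_i})$ (again the non-midpoint cuts fail the rigid template, the borderline ones forcing $\al+1\equiv\al-1\pmod m$); thus $p=\mu(a_{\al_1}\cdots a_{\al_i})\in\mu(\FF)$ for the relevant $i$, and $g=zp\in\mu(\FF)$.

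Parts (c)--(e) then follow cleanly. For (c): if some $1\ne h\in H_1$ had $\mu(h)$ conjugate into a factor of $\FF$, or $\mu_2(h)$ conjugate into $G_1$ or into $G(2,m)$ — note that for $h$ cyclically reduced of length $\ge2$ the reduced $\A$-form of $\mu(h)$ contains $g_1$ in each $c_\al$-block, so its $\A_2$-form is cyclically reduced of syllable length $\ge2$ and not conjugate into a factor of $\FF_2(1)$ — then $h$ would be conjugate into some $G_\gamma$, contradicting that $H_1$ is factor-free; so $\mu(H_1)$ and $\mu_2(H_1)$ are factor-free. The equality $\deg\Psi(H_1)=\deg\Psi(\mu_2(H_1))$ I would get by building an irreducible core graph of $\mu_2(H_1)$ from $\Psi(H_1)$, replacing each letter-transition of type $\al$ by a path spelling $c_\al^{-1}a_\al c_\al$ and folding: since the $c_\al^{\pm1}$ are fixed words, the edges coming from $c$-blocks fold into a bounded union of paths whose interior vertices have small degree, and one checks the degree-$\ge3$ vertices of the result correspond, with the same degrees, to those of $\Psi(H_1)$. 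For (d): $\mu_2$ is injective, so $\brr(\mu_2(L))=\brr(L)$ for finitely generated $L$ and $\mu_2(A\cap B)=\mu_2(A)\cap\mu_2(B)$; by (b), whenever $\mu_2(K_1)\cap t\mu_2(K_2)t^{-1}\ne\{1\}$ one gets $\mu_2(\FF)\cap t^{-1}\mu_2(\FF)t\ne\{1\}$, hence $t\in\mu_2(\FF)$, so $s\mapsto\mu_2(s)$ is a bijection $S(K_1,K_2)\to S(\mu_2K_1,\mu_2K_2)$ carrying $K_1\cap sK_2s^{-1}$ to $\mu_2(K_1\cap sK_2s^{-1})=\mu_2K_1\cap\mu_2(s)\mu_2K_2\mu_2(s)^{-1}$, a subgroup of equal reduced rank; summing over $S$ and using Lemma~\ref{Lm3} gives $\brr(K_1,K_2)=\brr(\mu_2K_1,\mu_2K_2)$. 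For (e): for $H_2$ with $\brr(H_2)>0$ and $\deg\Psi(H_2)\le d$, parts (c),(d) give $\brr(H_1,H_2)/\brr(H_2)=\brr(\mu_2H_1,\mu_2H_2)/\brr(\mu_2H_2)$ with $\mu_2H_2$ a finitely generated factor-free subgroup of $\FF_2(1)$ of positive reduced rank and $\deg\Psi(\mu_2H_2)=\deg\Psi(H_2)\le d$; by Lemma~\ref{lemsup} applied inside $\FF_2(1)$ this ratio is at most the supremum over $K_2$ with property~(Bd) relative to $\mu_2H_1$; taking the supremum over $H_2$, then dividing by $\brr(H_1)=\brr(\mu_2H_1)$ and using $\sigma=\sup_d\sigma_d$, yields $\sigma_d(H_1)\le\sigma_d(\mu_2H_1)$ and $\sigma(H_1)\le\sigma(\mu_2H_1)$.

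The main obstacle is part~(b), and within it the two rigidity facts about the words $c_\al$, both resting essentially on $m\ge3$: that $\mu(x_0)$ is not a proper power, and that among all cyclic permutations of the reduced word $\mu(x)$ only the $k$ bridge-midpoint ones lie in $\mu(\FF)$. Each is proved by tracking the type sequence of reduced words in $\mu(\FF)$ through a finite case analysis, the decisive point in every borderline case being that a coincidence would require $\al+1\equiv\al-1\pmod m$. A secondary, more bookkeeping-heavy difficulty is the graph-degree identity in part~(c).
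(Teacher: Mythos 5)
Your proposal is correct and follows essentially the same route as the paper: the junction-by-junction reducedness of $\mu(U)$, the conjugacy theorem for cyclically reduced elements of free products, the rigidity of the blocks $c_\al^{\pm1}$ for $m\ge 3$ (with the same decisive obstruction $\al+1\equiv\al-1\pmod m$), and antinormality driving the double-coset bijection in (d) are exactly the paper's ingredients, asserted at a comparable level of detail. The only differences are organizational: you split (b) into the factor case and the cyclically reduced case and analyze the centralizer via the root of $\mu(x_0)$, whereas the paper reduces by induction to cyclically reduced words and reads the conjugator directly off the formula $W\equiv V_{12}V_1^k$, which sidesteps the separate not-a-proper-power claim; this is a cosmetic, not substantive, divergence.
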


\begin{proof}
It is clear that the map \eqref{map2} extends to homomorphisms
 $$
 \mu : \FF \to \FF , \quad \mu_2 :  \FF \to \FF_2(1) .
 $$
Note that if $a_1 \in G_{\al_1}$ and $a_2 \in G_{\al_2}$ are nontrivial elements and $\al_1 \ne \al_2$, then
$\mu(a_1) \mu(a_2) $ is a cyclically reduced word. This remark implies that the kernels of the maps $\mu, \mu_2$ are trivial, whence $\mu, \mu_2$  are monomorphisms.
\medskip

(a) It follows from the foregoing remark that  a word $U \in \FF$ with $|U| >1$ is cyclically reduced if and only if $\mu(U)$ is cyclically reduced.
\medskip

(b)  Let $U_1, U_2 \in \FF$ be reduced words and
$W \mu(U_1) W^{-1} = \mu(U_2)$ in $\FF$. Using induction on $|U_1| + |U_2|$, we will prove that $W \in \mu(\FF)$.
\smallskip

Suppose $U_1$ is not cyclically reduced and
$$
U_1 \equiv a_1 U_3 a_2 ,
$$
where $a_1, a_2 \in G_\al \setminus \{ 1\}$ are letters of $U_1$. Then we can replace $U_1$ with $U_1' := U_3 a_3$, where $a_3 \in G_\al$,    $a_3 = a_2 a_1$ in $G_\al$ if  $a_3 \ne 1$ or with  $U_1' := U_3$ if $a_3 = 1$, and we replace $W$ with $W' := W \mu_2(a_1)$. This way we obtain an equality
$$
W' \mu(U_1') (W')^{-1} \overset 0 =  \mu(U_2)
$$
in $\FF$ in which $|U_1'|+ |U_2| < |U_1| + |U_2|$. Hence, it follows from the induction hypothesis that $W \in \mu(\FF)$, as required. If  $U_2$ is not cyclically reduced, then, analogously to what we did above for $U_1$, we can decrease the sum  $|U_1| + |U_2|$ and use the induction hypothesis.
\smallskip

Thus we may assume that both words $U_1, U_2$ are
cyclically reduced.  By part (a), the words $\mu(U_1)$, $\mu(U_2)$ are also cyclically reduced.  Observe that if $$
W V_1 W^{-1} \overset 0 =  V_2
$$
in $\FF$, where $V_1, V_2$ are cyclically reduced and $W$ is reduced, then $V_2$ is a cyclic permutation of $V_1$. More specifically, there is a factorization
$$
V_1 \equiv V_{11}V_{12}
$$
and an integer $k$ such that if $k \ge 0$ then $W \equiv V_{12}V_{1}^k$ and if  $k \le 0$ then $W \equiv V_{11}^{-1}V_{1}^k$. In either case, $V_2 \equiv V_{12} V_{11}$. Applying this observation to the equality
$$
W \mu(U_1) W^{-1} \overset 0 = \mu(U_2)
$$
in $\FF$, we can see from \eqref{map2}, when $m \ge 3$,  that a cyclic permutation  of  $\mu(U_1)$ equal to $\mu(U_2)$ must have the form
$\mu(\bar U_1)$, where $\bar U_1$  is a cyclic permutation of $U_1$. For similar reasons, $W \equiv \mu(V)$  for some $V \in \FF$ and part (b) is proven for the subgroup $\mu(\FF)$. It now follows that $\mu_2(\FF)$  is also antinormal in $\FF_2(1)$.
\medskip

(c) Arguing on the contrary, suppose $H$ is a factor-free subgroup of $\FF$ and one of $\mu(H)$, $\mu_2(H)$ is not factor-free in $\FF$, $\FF_2(1)$, resp.  Then it follows from the definitions that  $\mu_2(H)$ is not factor-free in $\FF_2(1)$. Hence, there is a reduced word $U$
such that $U$ is not conjugate in $\FF$ to a word of length $\le 1$ and
\begin{equation}\label{e54}
    \mu(U) \overset 0 = WVW^{-1}
\end{equation}
in $\FF$, where $W$ is either empty or reduced and $V$ is either a letter of $G_1 \setminus \{ 1\}$ or $V$ is
a reduced word with no letters of $G_1$. Thus, $V$ is reduced and either $V \in G_1$ or $V \in G(2,m)$.
\smallskip

Assume that the word $U$ in \eqref{e54} is not cyclically reduced. Then
$$
U \equiv a_1 U_1 a_2 ,
$$
where $a_1, a_2 \in G_\al \setminus \{ 1\}$ are letters of $U$. If $a_1 a_2 = a_3$ in $G_\al$   and $a_3 \in G_\al \setminus \{ 1\}$, then the word $U' \equiv U_1 a_3$, similarly to $U$, is not conjugate to $\FF$ to a word of length $\le 1$ and $\mu(U')$, being conjugate to $\mu(U)$ in $\FF$, has a representation of the form \eqref{e54}, so $U$ can be replaced with
$U'$.  If $a_1 a_2 = 1$ in $G_\al$, then the word $U_1$ can be taken as $U$. Hence, by induction on $|U|$, we may assume that $U$ is cyclically reduced.
\smallskip

If the word  $WVW^{-1}$  in \eqref{e54} is not reduced, then there are words $W'$, $V'$ such that
$$
\mu(U) \overset 0 = W' V' (W')^{-1} ,
$$
$W'$, $V'$ have the foregoing properties of $W$, $V$, resp., and
$$
2|W'| + |V'| < 2|W| + |V| .
$$
Indeed, if, say  $W \equiv W_1 a_1$ and $V \equiv a_2 V_1$, where $a_1, a_2 \in G_\al \setminus \{ 1\}$, then we set $W' := W_1$ and $V'$ is a reduced word equal in $\FF$  to $a_1 a_2 V_1 a_1^{-1}$. Note that $W'$, $V'$ have the foregoing properties of $W$, $V$, resp., and
$$
|W'| =|W|-1 , \quad    |V'| \le |V|+1 ,
$$
whence  $2|W'| + |V'| < 2|W| + |V|$. Thus, by induction on
$2|W| + |V|$, we may assume that the word $W V W^{-1}$ in \eqref{e54} is  reduced.
\smallskip

Since $U$ is cyclically reduced and $|U| >1$, it follows from part (a) that $\mu(U)$ is cyclically reduced. Hence, the word $W$ is empty and  $\mu(U) \equiv V$, where $V$ is a single letter of $G_1 \setminus \{ 1\}$ or $V$ has no letters of $G_1$. However, neither situation is possible by the definition  \eqref{map2}. This contradiction completes the proof of the first statement of part (c).
\smallskip

Now we will prove the equality
$$
\deg \Psi(H_1) = \deg  \Psi(\mu_2(H_1))
$$
of part (c).
It follows from the definition  \eqref{map2} that the graph $\Psi(\mu_2(H))$
can be visualized as  a graph obtained from $\Psi(H)$ by subdivision of edges  of $\Psi(H)$ into paths in accordance with formula  \eqref{map2}  and subsequent ``mergers" of edges that have labels in $G_2 \cup \dots \cup G_{m}$. In particular, for every
vertex $v \in V \Psi(H)$ with   $\deg v >2$, there will be a unique vertex $u = u(v) \in V_S \Psi(\mu_2(H))$  of degree $\deg u = \deg v$ and this map $v \mapsto u(v)$ is bijective on  the sets of all vertices of $\Psi(H)$, $\Psi(\mu_2(H))$    of  degree $> 2$. Hence, the maximal degree of vertices of  $\Psi(\mu_2(H))$ is equal to that of $\Psi(H)$, as claimed.
\medskip

(d) By part (c), the subgroups  $\mu_2(K_1), \mu_2(K_2)$ of $\FF_2(1)$ are  factor-free
and the subgroups  $\mu(K_1)$, $\mu(K_2)$  of $\FF$ are also factor-free.
Let
$$
T(\mu_2(K_1), \mu_2(K_2) )
$$
be a  set of representatives of those double cosets
$\mu_2(K_1) U \mu_2(K_2)$ of $\FF_2(1)$, where $U \in \FF_2(1)$, that have the property
$$
\mu_2(K_1) \cap U \mu_2(K_2)U^{-1} \ne  \{ 1 \} .
$$
If $T \in T(\mu_2(K_1), \mu_2(K_2) )$, then it follows from the definition    of  the set   $T(\mu_2(K_1), \mu_2(K_2))$
that there are nontrivial
$V_i \in K_i$, $i =1,2$, such that
$$
T \mu_2(V_2) T^{-1} = \mu_2(V_1) \ne 1
$$
in $\FF_2(1)$.
By part (b), such an  equality  implies  $T \in \mu_2(\FF)$ (note $\mu_2$ could be replaced with $\mu$).   Now we can see that
there is a set $S(K_1, K_2) \subseteq \FF$ such that
$$
\mu_2(S(K_1, K_2)) = T(\mu_2(K_1), \mu_2(K_2) )
$$
and $S(K_1, K_2) $ is  a set of representatives of those double cosets $K_1 S K_2$ of $\FF$, $S \in \FF$, that have the property $K_1 \cap S K_2 S^{-1} \ne  \{ 1 \}$. Therefore,
$$
\brr(K_1, K_2)   := \sum_{S \in S(K_1, K_2)}  \brr( K_1 \cap S K_2 S^{-1}) =
\brr(  \mu_2(K_1) , \mu_2(K_2) )  ,
$$
as desired.
\medskip

(e)  This follows from  Lemma~\ref{lemsup}, parts (c)--(d) and definitions.
\end{proof}

\section{Proofs of Theorems}

For the reader's convenience, we restate Theorems~1.1--1.3 before proving them.

\begin{T1} Suppose that $\FF =G_1 * G_2$ is the free product of two nontrivial groups $G_1,  G_2$
and  $H_1$ is a  finitely generated factor-free noncyclic subgroup of $\FF$. Then the following  are true.

\begin{enumerate}
\item[(a)]  For every integer $d \ge 3$,  there exists a linear programming problem (LP-problem)
\begin{equation*}\tag{1.8}
\PP(H_1, d) = \max\{ c(d)x(d) \mid A(d)x(d) \le b(d)  \}
\end{equation*}
with integer coefficients whose solution is equal to $-\sigma_d(H_1) \brr (H_1)$.
\smallskip

\item[(b)]  There is a finitely generated factor-free subgroup $H_2^* $ of $\FF$, $H_2^*= H_2^*(H_1)$,
such that  $H_2^* $   corresponds to  a vertex solution of the dual problem
$$
\PP^*(H_1, d) = \min \{ b(d)^{\top}  y(d)  \mid A(d)^{\top}y(d) = c(d)^{\top} , \, y(d) \ge 0  \}
$$
of the primal LP-problem  \eqref{lpa} of part (a)  and
$$
\bar \rr(H_1, H_2^*)  =  \sigma_d(H_1)  \bar \rr(H_1) \bar \rr( H_2^*) .
$$
In particular,  the WN${}_d$-coefficient $\sigma_d(H_1)$ of $H_1$ is rational.

Furthermore, if $\Psi(H_1)$ and  $\Psi(H_2^*)$ denote irreducible core graphs representing subgroups $H_1$ and $H_2^*$, resp.,
and $| E \Psi |$ is the number of oriented edges in the graph $\Psi$, then
$$
| E  \Psi(H_2^*) | < 2^{  2^{| E  \Psi(H_1) |/4 + \log_2 \log_2 (4d)  } } .
$$

\item[(c)]  There exists a linear semi-infinite programming problem (LSIP-problem)
$\PP(H_1) = \sup \{ cx \mid Ax \le b  \}$ with finitely many variables in $x$ and with countably  many constraints in the system $Ax \le b$ whose dual problem
$$
\PP^*(H_1)  = \inf \{ b^{\top} y \mid A^{\top} y = c^{\top} , \, y \ge 0  \}
$$
has a solution equal to  $-\sigma(H_1) \brr (H_1)$.
\smallskip

\item[(d)]  Let the word problem for both groups $G_1, G_2$ be solvable
and let an  irreducible core graph $\Psi(H_1)$  of $H_1$ be given. Then the
LP-problem \eqref{lpa}  of part (a)   can be algorithmically written down and the
WN${}_d$-coefficient $\sigma_d(H_1)$ for $H_1$  can be computed.
In addition,  an irreducible core graph $\Psi(H_2^*)$ of the subgroup $H_2^*$ of part (b) can be algorithmically constructed.
\smallskip

\item[(e)]  Let both groups $G_1$ and $G_2$ be finite,  let $d_{m} := \max( |G_1|, |G_2|) \ge 3$, and
 let an irreducible  core graph $\Psi(H_1)$  of $H_1$ be given.
 Then the LP-problem \eqref{lpa} of part (a)   for $d = d_{m}$  coincides with the
LSIP-problem $\PP(H_1)$ of part (c) and the WN-coefficient $\sigma(H_1)$ for $H_1$ is
rational and computable.
\end{enumerate}
 \end{T1}

\begin{proof}[Proof of Theorem~\ref{th1}]
 We start with part (a).  Assume that
 $$
 I = \{ 1,2\} ,  \quad  \FF  = G_1 * G_2
 $$
 and $H_1$ is a finitely generated factor-free noncyclic subgroup of $\FF$. As in Section~2,  let $\Psi_o(H_1)$ denote a finite irreducible $\A$-graph of $H_1$ and let $\Psi(H_1)$ denote the core of   $\Psi_o(H_1)$. Conjugating $H_1$ if necessary, we may assume that
 $\Psi_o(H_1) = \Psi(H_1)$.
\smallskip

Denote $Y_1 := \Psi(H_1)$ and pick an integer $d \ge 3$.
As in Sections~3--4, consider the system of linear inequalities $\SLI_d[Y_1]$, see   \eqref{slid},  and the LP-problem
\begin{equation}\label{lppf}
    \max \{ - x_s \mid \SLI_d[Y_1] \}  .
\end{equation}

 According to  Lemma~\ref{lem5}, the maximum of the  LP-problem
\eqref{lppf} is equal to
 $$-\sigma_d(Y_1) \brr(Y_1), $$
where
$$
\sigma_d(Y_1)\brr(Y_1) = \sup_{Y_2}  \bigg\{ \frac{  \brr ( \core(Y_1 \times Y_2) )}{ \brr(Y_2) }  \bigg\}
$$
over  all finite irreducible $\A$-graphs $Y_2$ with property (Bd) relative to $Y_1$.  By Lemma~\ref{lemsup}, we have
$$
\sigma_d( Y_1 ) \brr(Y_1)   = \sigma_d(H_1)\brr(H_1), $$
as desired in part (a). Part (a) is proven.
\medskip

We will continue to use below the notation introduced in the proof of part~(a).
\smallskip

Part (b) follows from  Lemmas~\ref{lem5},~\ref{lem6} and their proofs in which the construction of the graph
$Y_{2, Q_V}$ is based on a vertex solution  $y_V$ to the dual LP-problem \eqref{dlpd}.
To define the desired subgroup $H_2^*$  of  $\FF$  for $H_1$,
we can use the graph $Y_{2, Q_V}$ of Lemma~\ref{lem6}
as an  irreducible $\A$-graph $\Psi_{o^*}(H_2^*)$.  By Lemmas~\ref{lem5},~\ref{lem6},~\ref{lemsup},
the subgroup $H_2^*$ has all of the desired properties.
Part (b) is proven.
 \medskip

To prove part (c), we note that it follows from Lemmas~\ref{lem5} and \ref{lemsup}  that the dual problem  \eqref{dlp} of the LSIP-problem $ \sup \{ - x_s \mid \SLI[Y_1] \}$, where $Y_1 = \Psi(H_1)$ as above,  has the infimum equal to
$
-\sigma(Y_1) \brr(Y_1)  = -\sigma(H_1) \brr(H_1) .
$
This proves part (c).
\medskip

Now we turn to parts (d)--(e) of Theorem~1.1.
First we discuss  how to algorithmically write down inequalities of the system $\SLI_d[Y_1]$, where $d \ge 3$ is a fixed integer. Recall that every inequality of $\SLI[Y_1]$ is written in the form \eqref{inqa}--\eqref{inqb} and there are finitely many subsets $A \subseteq S_1(V_PY_1)$ that are indices of $k$ variables $\pm x_A$ in the left hand sides of inequalities  \eqref{inqa}--\eqref{inqb}, where $2 \le k = |T| \le d$. The coefficient of $x_s$ is the integer $-(k-2)$ and the right hand side of \eqref{inqa}--\eqref{inqb} is an integer $-N(\Om_T)_{\al}$, where
$$
0 \le N(\Om_T)_{\al} \le (d-2) |   V_P Y_1| ,
$$
see \eqref{cd2}. This information is sufficient to conclude that the set of inequalities in the system $\SLI_d[Y_1]$ is finite. However, this information is not sufficient to algorithmically write down
inequalities of $\SLI_d[Y_1]$ because the set of available sets $T$ is infinite whenever the union $G_1 \cup G_2$ is infinite.
\smallskip

To algorithmically write down  the system $\SLI_d[Y_1]$, we assume that
the word problem for both groups $G_1, G_2$ is solvable and we will look more closely into the definition of inequalities \eqref{inqa}--\eqref{inqb}.
\smallskip

Recall that inequalities \eqref{inqa}--\eqref{inqb} are defined in Section~3 by using an $\al$-admissible function $\Omega_T : T \to S_1( V_P Y_1)$, where $T \in S_2(G_\al)$ and $\al \in I = \{1,2\}$.
\smallskip

We also recall that $\sim_{\Omega_T}$ denotes an equivalence relation on the set of all pairs $(a, u)$, where $a \in T$ and $u \in \Omega_T(a)$, see Section~3. Making use of the equivalence relation $\sim_{\Omega_T}$, we define a relation  $\approx$ on the set  $T$ so that $a \approx b$ if and only if  there are  $u \in \Omega_T(a)$ and $v \in \Omega_T(b)$ such that
$$(a,u) \sim_{\Om_T} (b,v) .
$$
Note that this relation $\approx $  is reflexive and symmetric. The transitive closure of  the relation $\approx $ is an equivalence relation on $T$ which we denote by  $\approx_{{\Om_T} } $. The equivalence class of $a \in T$  is denoted  $[a]_{\approx_{\Om_T}}$.   It follows from the definition of  $[a]_{\approx_{\Om_T}}$ and from the property of being $\al$-admissible for $\Om_T$ that, for every $b_1 \in
[a]_{\approx_{\Om_T}}$, there is an element $b_2 \in [a]_{\approx_{\Om_T}}$ such that $b_2 \ne b_1$ and there are edges $e_1, e_2 \in EY_1$ such that  $(e_1)_+ = (e_2)_+ \in V_S Y_1$, the vertex $(e_1)_+$ has  type $\al$, $(e_i)_- \in \Om_T (b_i)$, $i =1,2$, and
\begin{equation}\label{t3t3}
    b_1 b_2^{-1} = \ph(e_1) \ph(e_2)^{-1}
\end{equation}
in $ G_\al$.
 Note that if we connect every two such elements $b_1, b_2 \in [a]_{\approx_{\Om_T}}$  by an edge, then the graph $\Gamma(\Om_T)$,  whose vertex set is $T$, will have connected components whose vertex sets are equivalence classes  $[a]_{\approx_{\Om_T}}$ of $T$.
This connectedness of subgraphs  of $\Gamma(\Om_T)$ on vertex sets
$[a]_{\approx_{\Om_T}}$ obviously implies the following.

\begin{lem}\label{lem3t} The equations  \eqref{t3t3} can be used to determine all elements of the equivalence class  $[a]_{\approx_{\Om_T}}$ for given $a \in G_\al$.
\end{lem}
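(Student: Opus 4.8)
The plan is to identify the class $[a]_{\approx_{\Om_T}}$ with the connected component of $a$ in the graph $\Gamma(\Om_T)$ introduced just above the lemma, and then to note that this component is exactly the set of elements of $T$ produced by starting from $a$ and repeatedly applying the moves recorded in \eqref{t3t3}. Since $Y_1$ is finite and $[a]_{\approx_{\Om_T}}$ is a subset of the finite set $T$, this exhausts the whole class in finitely many steps. So the argument has three ingredients: a "one move keeps you in the class" statement, an "every element of the class is reached by moves" statement, and a finiteness remark.

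First I would make the single step explicit. Given $b_1 \in T$, a vertex $u \in \Om_T(b_1)$, and edges $e_1, e_2 \in E Y_1$ with $(e_1)_- = u$ and with $(e_1)_+ = (e_2)_+$ a secondary vertex of type $\al$, put $b_2 := \ph(e_2)\ph(e_1)^{-1} b_1$ in $G_\al$; this is precisely the solution of \eqref{t3t3} for $b_1$. If, in addition, $b_2 \in T$ and $(e_2)_- \in \Om_T(b_2)$, then by the definition of $\sim_{\Om_T}$ in Section~3 we have $(b_1, u) \sim_{\Om_T} (b_2, (e_2)_-)$, hence $b_1 \approx b_2$, hence $b_1$ and $b_2$ lie in the same $\approx_{\Om_T}$-class. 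Conversely, if $b_1 \ne b_2$ and $b_1 \approx b_2$, then by definition there are $u \in \Om_T(b_1)$, $v \in \Om_T(b_2)$ with $(b_1, u) \sim_{\Om_T} (b_2, v)$, and unwinding the definition of $\sim_{\Om_T}$ produces edges $e_1, e_2 \in E Y_1$ with $(e_1)_- = u$, $(e_2)_- = v$, $(e_1)_+ = (e_2)_+ \in V_S Y_1$ of type $\al$, and $\ph(e_1)\ph(e_2)^{-1} = b_1 b_2^{-1}$, i.e. $b_2 = \ph(e_2)\ph(e_1)^{-1} b_1$ is obtained from $b_1$ by exactly such a move. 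Passing to transitive closures, the relation "$b$ is obtained from $a$ by a finite chain of moves \eqref{t3t3}" coincides with $\approx_{\Om_T}$ restricted to the class of $a$; this is the component structure of $\Gamma(\Om_T)$ already noted before the lemma.

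Finally I would describe the resulting procedure together with its termination: maintain a set $S$, initialized to $\{a\}$, and while possible pick $b \in S$, a vertex $u \in \Om_T(b)$, and a pair of edges $e_1, e_2 \in E Y_1$ with $(e_1)_- = u$ and $(e_1)_+ = (e_2)_+ \in V_S Y_1$ of type $\al$, form $b' := \ph(e_2)\ph(e_1)^{-1} b \in G_\al$, and adjoin $b'$ to $S$ whenever $b' \in T$ and $(e_2)_- \in \Om_T(b')$; by the two inclusions above, the terminal value of $S$ is exactly $[a]_{\approx_{\Om_T}}$. This halts because $S \subseteq T$ throughout, so $|S| \le |T| < \infty$, and at each $b$ there are at most $|V_P Y_1|\cdot|E Y_1|^2$ candidate moves to examine, $Y_1$ being finite. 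The only step that is not purely formal is the evaluation of the products $\ph(e_2)\ph(e_1)^{-1} b$ in $G_\al$ and the membership test $b' \in T$, which amount to multiplication and equality-testing in $G_\al$ — exactly what the standing hypothesis that the word problem for $G_\al$ is solvable provides. I expect no genuine obstacle here: the lemma is essentially a constructive restatement of the already-recorded fact that the sets $[a]_{\approx_{\Om_T}}$ are the vertex sets of the connected components of $\Gamma(\Om_T)$.
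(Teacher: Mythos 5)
Your proposal is correct and follows the same route as the paper, which simply observes that the connectedness of the subgraphs of $\Gamma(\Om_T)$ on the vertex sets $[a]_{\approx_{\Om_T}}$, together with the solvability of the word problem in $G_\al$, makes the class computable by iterating the moves \eqref{t3t3}. You have merely spelled out the "one move stays in the class / every element is reached by moves / finiteness" details that the paper leaves implicit.
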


\begin{proof} This easily follows from the definitions. Recall that the word problem is solvable in $G_\al$.
\end{proof}

Let
$
C(\al, d)
$
be a subset of $G_\al$ of cardinality
$$
|C(\al, d) | = d^2 +d,
$$
where   $\al =1,2$ and $d \ge 3$ is a fixed integer.
In the arguments below, this set  $C(\al, d)$  will be held fixed. Note that if $|G_\al| < d^2 +d$, so it is not possible to choose  $d^2 +d$ distinct elements in $G_\al$, then all inequalities \eqref{inqa} if $\al =1$ or $\eqref{inqb}$ if  $\al =2$  for $k \le d$, where as before $k = |T|$,  can be written down effectively for the following reasons. The sets
$$
S_2(G_\al) \quad  \mbox{and} \quad   \{ \Om_T \mid \Om_T : T \to S_1(V_P Y_1), T \in  S_2(G_\al) \}
$$
are finite, they can be written down explicitly, and it is possible to verify whether given function
$$
\Om_T :  T  \to S_1(V_P Y_1)
$$
is $\al$-admissible.
\medskip

Clearly, the same conclusion as above holds if both $G_1, G_2$ are finite but in the arguments below we will only need the equality $|C(\al, d) | = d^2 +d$, hence we can just assume that $|G_\al| \ge d^2 +d$.
\medskip

Consider a subset $C \subset  C(\al, d)$, where $1 \le |C| \le k \le d$,  and let  $Z = \{ z_1, \dots, z_{k-|C|} \}$ be  a set of indeterminates . Note  that $| C \cup Z | = k$.
Consider a function
\begin{equation}\label{fscz}
\Om_{ C \cup Z} : C \cup Z \to S_1(V_P Y_1) .
\end{equation}

Similarly to the relation $\sim_{\Omega_T}$ defined in Section~3, we introduce a relation $\sim_{\Om_{ C \cup Z}}$ on the set of all pairs $(a, u)$, where $a \in C \cup Z$ and $u \in \Om_{ C \cup Z}(a)$, defined as follows. Two pairs
$(a,u)$ and $(b,v)$ are related by $\sim_{\Om_{ C \cup Z}}$  if and only if  either
$(a,u)=  (b,v)$ or, otherwise, there exist edges $e, f \in EY_1$ such that $e_- = u$, $f_- = v$ and the secondary vertex $e_+ = f_+$ has type $\al$.
\medskip

We also consider an analogue  $\approx_Z$  of the relation $\approx$ defined above so that $a \approx_Z b$, where $a, b \in C\cup Z$, if and only if   there are
$$
u \in \Om_{ C \cup Z}(a), \quad  v \in \Om_{ C \cup Z}(b)
$$
such that
$
(a, u) \sim_{\Om_{ C \cup Z}}  (b, v) .
$
As before,  the  relation $\approx_Z$ is reflexive and symmetric.   By taking the transitive closure of the  relation $\approx_Z$ we obtain an equivalence relation on the set $C \cup Z$  which is denoted by $\approx_{\Om_{ C \cup Z}}$.
\medskip

We will say that a function $\Om_{{ C \cup Z}}$, as in \eqref{fscz}, is {\em unacceptable} if there is an equivalence class $[(a, u)]_{\sim_{\Om_{ C \cup Z}}}$ of $\sim_{\Om_{ C \cup Z}}$ with a single element in it or there is  an  equivalence class $[a]_{\approx_{\Om_{ C \cup Z}}}$  of the relation $\approx_{\Om_{ C \cup Z}}$ that
contains no elements of $C$. Note that, when given a function $\Om_{{ C \cup Z}}$ as in \eqref{fscz}, we can algorithmically check whether or not $\Om_{{ C \cup Z}}$ is unacceptable.
\medskip

If now the function $\Om_{C \cup Z}$ is not found to be unacceptable, then  we attempt to construct a function
$$
\zeta : Z \to G_\al
$$
by using the following algorithm.
\medskip

First, we set $\zeta_0(c) := c$ if $c \in C $  and let
$$
C_0 := C,  \quad Z_0 := \varnothing .
$$

Consider the set of all triples $(a, u, \ell)$, where $a \in C \cup Z$, $u \in  \Om_{C \cup Z}(a)$, $1 \le \ell  \le d+1$, and do the following. By induction on $i \ge 0$, assume that the sets
$$
C_i \subseteq G_\al ,  \quad Z_i \subseteq Z
$$
are constructed and a bijective function
$$
\zeta_i : C_0 \cup Z_i \to C_i
$$
is  defined so that the restriction of $\zeta_i$ on $C_0$ is  $\zeta_0$. For every unordered pair
$\{ (a, u, \ell),  (b, v, \ell) \}$ of distinct triples with a fixed $\ell$ (first we use $\ell =1$, then $\ell =2$ and so on up to $\ell  = d+1$), we check whether there are edges $e , f \in EY_1$ such that
$$
e_- = u , \quad f_- = v ,  \quad e_+ = f_+,
$$
and  $e_+ = f_+ \in V_SY_1$ has type $\al$.  If there are no such edges, then we pass on to the next pair
$\{ (a, u, \ell) ,  (b, v, \ell) \}$. If there are such edges $e, f$, then we consider three Cases 1--3 below, perform the described actions and pass on to the next pair. We remark that these actions can be
algorithmically implemented as follows from the solvability of the word problem for groups $G_1, G_2$ and the availability of the graph $Y_1 = \Psi(H_1)$.
\medskip

{\em  Case 1.} \  If both $a, b \in C \cup Z_i$, then we check whether the equality
 $$
 \zeta_i(a) \zeta_i(b)^{-1} = \ph(e) \ph(f)^{-1}
 $$
 holds in $G_\al$. If this equality is false, then we conclude that the function  $\Om_{C \cup Z}$ is  unacceptable and stop. Otherwise, we set
 $$
 Z_{i+1} := Z_{i} ,  \quad C_{i+1} := C_{i}  ,  \quad   \zeta_{i+1} := \zeta_{i} .
 $$
\medskip

{\em  Case 2.} \  Suppose that exactly one of  $a, b$ is in $ C \cup Z_i$, say $b \in C \cup Z_i$. Then it is clear that
$a \in Z \setminus Z_i$ and  we can uniquely determine an element $\xi(a)$ by solving the equation
  $\xi(a) \zeta_i(b)^{-1} = \ph(e) \ph(f)^{-1} $. If $\xi(a) \in C_i$, then we conclude that the function  $\Om_{C \cup Z}$ is  unacceptable and stop. Otherwise, we set
$$
Z_{i+1} := Z_{i}\cup \{ a \}  ,  \quad   C_{i+1} := C_{i}\cup \{ \xi(a) \}
$$
and define a function $\zeta_{i+1}$ on the set $C \cup Z_{i+1}$ so that  $\zeta_{i+1}(a) := \xi(a)$ and the restriction of  $\zeta_{i+1}$ on $C \cup Z_{i}$ is $\zeta_{i}$.
\medskip

  {\em  Case 3.} \  If both $a, b \not\in C \cup Z_i$, then we  set
$$
Z_{i+1} := Z_{i}   ,  \quad    C_{i+1} := C_{i}  ,  \quad   \zeta_{i+1} := \zeta_{i} .
$$

Cases 1--3 are complete.
\medskip

Since every equivalence class $[a]_{\approx_{\Om_{C \cup Z}}}$ contains an element
of $C$,  it follows from the definitions that while this algorithm runs over all pairs for a fixed $\ell' = 1, \dots, d$, one of the following three Cases (C1)--(C3) will occur.

\begin{enumerate}
\item[(C1)] For some $i$,  $|Z_{i+1}| = |Z_{i}|+1$.

\item[(C2)] The set  $\Om_{C \cup Z}$ is found to be  unacceptable.

\item[(C3)] For the index $i$, corresponding to the last pair $\{ (a, u, \ell'),  (b, v, \ell') \}$  for parameter $\ell$ equal to $\ell'$,  one has  $Z_{i} = Z$.
\end{enumerate}

Since $|Z| \le d-1$, we can see that it is not possible for Case (C1) to occur for all
$\ell' = 1, \dots, d$. Hence, running this algorithm
consecutively for $\ell' = 1, \dots, d$, results either in conclusion that the function   $\Om_{C \cup Z}$ is  unacceptable or in construction of a bijective function
$$
\zeta = \zeta_{i} : C \cup Z \to  C_i \subseteq G_\al ,
$$
where $Z_{i} =Z$, in which case we say that the function $\Om_{C \cup Z}$ is  {\em acceptable}.
Furthermore, setting
$$
T := \zeta(C \cup Z) \quad \mbox{and}  \quad  \Om_T( \zeta(a) ) := \Om_{C \cup Z}(a)
$$
for every $a \in C \cup Z$, we obtain an $\al$-admissible function $\Om_T$ on the set $T$, $T \subseteq G_\al$.
\medskip

Observe that the set of all such   functions
$$
\Om_{C \cup Z} :  C \cup Z \to S_1(V_PY_1) ,
$$
where $C \subseteq C(\al, d)$  and  $Z = \{ z_1, \dots, z_{k-|C|} \}$, see  \eqref{fscz},  is finite  (recall the set $C(\al, d)$ is fixed)  and that all such  functions   can be written down explicitly. Moreover, using the foregoing algorithm, we can verify whether a function $\Om_{C \cup Z}$ is acceptable and, when doing so, construct a unique function
$$
\zeta : C \cup Z \to S_1(V_PY_1) ,
$$
 where $T := \zeta( C \cup Z )$, so that $\Om_T( \zeta(a) ) := \Om_{C \cup Z}(a)$ for every $a \in C \cup Z$ and $\zeta(c) = c$ if $c \in C$. Therefore, in order to establish that inequalities \eqref{inqa}--\eqref{inqb} can be algorithmically written down, it remains to prove the following.

\begin{lem}\label{lemx} For every $\al$-admissible function
$$
\Om_{T'} : T' \to S_1(V_PY_1) ,
$$
where  $T' \subset G_\al$ and $2 \le |T'|=k \le d$, there exists an acceptable function
$$
\Om_{C \cup Z} : C \cup Z \to S_1(V_PY_1) ,
$$
where $C \subset C(\al, d)$ and $Z =   \{ z_1, \dots, z_{k-|C|} \}$, with the following property.
\smallskip

Let $T := \zeta(C \cup Z)$ and let
$$
\Om_{T} : T \to S_1(V_PY_1)
$$
be the  $\al$-admissible function, defined by
 $\Om_T( \zeta(a) ) := \Om_{C \cup Z}(a)$ for every $a \in C \cup Z$ and $\zeta(c) = c$ for $c \in C$. Then the two
 inequalities \eqref{inqa}, that correspond to  $\Om_{T'}$ and to $\Om_{T}$ if $\al =1$, or the two
 inequalities \eqref{inqb}, that correspond to  $\Om_{T'}$ and to $\Om_{T}$ if $\al =2$, are identical.
\end{lem}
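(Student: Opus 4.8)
The plan is to reduce the statement to a bookkeeping fact about the two invariants of an $\al$-admissible function that control its inequality, and then to realize those invariants inside the reservoir $C(\al,d)$.

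First I would observe that the inequality \eqref{inqa} (if $\al=1$), respectively \eqref{inqb} (if $\al=2$), attached to an $\al$-admissible $\Om\colon T\to S_1(V_PY_1)$ depends only on: the parameter $k=|T|$, fixing the coefficient $-(k-2)$ of $x_s$; the multiset $\{\Om(b):b\in T\}$, which produces the variables $\pm x_{A}$ in the left-hand side; and the integer $N_\al(\Om)$ in the right-hand side. Since $N_\al(\Om)=\sum_{b\in T}|\Om(b)|-2m(\Om)$, where $m(\Om)$ is the number of $\sim_\Om$-classes, and the first summand is already read off the multiset, it suffices to produce an acceptable $\Om_{C\cup Z}$ whose reconstructed function $\Om_T$ (via $\zeta$) has the same multiset of values and the same number of $\sim$-classes as $\Om_{T'}$. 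The multiset part is automatic: as $\zeta$ is bijective and $\Om_T(\zeta(a))=\Om_{C\cup Z}(a)$, it is enough to let $\Om_{C\cup Z}$ be a copy of $\Om_{T'}$ under a bijective identification $C\cup Z\to T'$. The content of the lemma is thus to arrange $m(\Om_T)=m(\Om_{T'})$.

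For the construction I would partition $T'$ into the blocks $T'_1,\dots,T'_r$ of the equivalence relation generated by the (label-free) relation used to define $\sim_{\Om_{C\cup Z}}$ — equivalently, group together those $b\in T'$ whose $\Om_{T'}$-values meet a common secondary vertex of $Y_1$ of type $\al$; here $r\le|T'|=k\le d$. Fix a representative $b^{(s)}\in T'_s$, choose distinct $c_1,\dots,c_r$ in $C(\al,d)$, set $C=\{c_1,\dots,c_r\}$, $Z=\{z_b: b\in T'\setminus\{b^{(1)},\dots,b^{(r)}\}\}$ (so $|C\cup Z|=k$, $|C|=r\le k$), and define $\Om_{C\cup Z}(c_s):=\Om_{T'}(b^{(s)})$, $\Om_{C\cup Z}(z_b):=\Om_{T'}(b)$. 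One checks that $\Om_{C\cup Z}$ is not preliminarily unacceptable: each $\sim_{\Om_{C\cup Z}}$-class is a full geometric block at a secondary vertex of $Y_1$, and that block, for $\Om_{T'}$, is a union of $\sim_{\Om_{T'}}$-classes of size $\ge 2$ by admissibility, hence has $\ge 2$ elements; and each $\approx_{\Om_{C\cup Z}}$-class contains an anchor $c_s$. The reconstruction algorithm is then driven by the equations \eqref{t3t3}; using Lemma~\ref{lem3t} I would show, by induction along the links inside a block $T'_s$, that the algorithm necessarily outputs $\zeta(x)=b_x\,(b^{(s)})^{-1}c_s$ for every $x$ with corresponding element $b_x\in T'_s$. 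In particular the key identity $\zeta(x)\zeta(y)^{-1}=b_xb_y^{-1}$ holds for $x,y$ in a common block, $T=\zeta(C\cup Z)$ is the disjoint union of the right translates $T'_s(b^{(s)})^{-1}c_s$, and $\zeta$ transports the $\sim$- and $\approx$-structure of $\Om_{C\cup Z}$ verbatim inside each block, so $\Om_T$ is $\al$-admissible and $\zeta$ is the identity-extending bijection demanded by the statement.

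The hard step will be ruling out, for a suitable anchor choice, the two phenomena that would break $m(\Om_T)=m(\Om_{T'})$: an abort in the ``unacceptable'' branch of Case~1 or Case~2, and a spurious merger in $\Om_T$ of two distinct $\sim_{\Om_{T'}}$-classes. This is exactly where properties (P1) and (P2) of the irreducible graph $Y_1$ enter: by (P1) the type-$\al$ edge issuing from a primary vertex is unique, so every link the algorithm inspects is governed by a single equation $\zeta(x)\zeta(y)^{-1}=\ph(e_u)\ph(e_v)^{-1}$ with $e_u,e_v$ determined by $u,v$; and by (P2) distinct edges into a fixed secondary vertex carry distinct labels, which is what pins the $\sim$-class structure to the relevant fibers. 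I would argue that, once the labels forced by reconstruction are in place, the only coincidences capable of causing an abort or a merge are finitely many equations $c_sc_t^{-1}=\gamma_{s,t}$, one family for each ordered pair $(s,t)$ of blocks, where $\gamma_{s,t}\in G_\al$ is computed explicitly from $Y_1$, $\Om_{T'}$, and the representatives $b^{(s)},b^{(t)}$; with at most $d$ blocks and boundedly many forbidden positions per pair, a greedy choice of $c_1,\dots,c_r$ inside the $(d^2+d)$-element set $C(\al,d)$ avoids all of them (when picking $c_s$ one must miss the $\le s-1$ earlier anchors and the forbidden positions relative to each, a total safely below $d^2+d$ for $d\ge 3$). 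Assembling these facts — $\Om_{C\cup Z}$ acceptable, $\zeta$ as required, $\Om_T$ with the same multiset of values and the same number of $\sim$-classes as $\Om_{T'}$ — yields $N_\al(\Om_T)=N_\al(\Om_{T'})$ and hence the identity of the two inequalities. I expect the real difficulty to lie in the no-abort part, i.e.\ in verifying that the label-free link graph underlying $\Om_{C\cup Z}$ indeed admits the consistent labelling produced by the algorithm; the counting argument pinning down the size $d^2+d$ of $C(\al,d)$ is routine by comparison.
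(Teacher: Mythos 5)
Your strategy coincides with the paper's: the paper also right-translates the blocks of $T'$ so that each translate meets $C(\al,d)$ while the translates stay pairwise disjoint (this is exactly Lemma~\ref{lemy}, proved by the same kind of pigeonhole count inside the $(d^2+d)$-element reservoir), and then reads the reconstructed function off the translated set. The one structural deviation is that you block $T'$ by the \emph{label-free} relation, whereas the paper blocks by the label-dependent relation $\approx_{\Om_{T'}}$ generated by \eqref{t3t3}; the latter is what makes the translation harmless, since multiplying an entire $\approx_{\Om_{T'}}$-class on the right by a single element preserves every difference $ab^{-1}$ and hence every instance of \eqref{t3t3} inside that class, which is also what yields your predicted formula $\zeta(x)=b_x(b^{(s)})^{-1}c_s$.

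The genuine gap is the step you explicitly postpone, and the fix you sketch cannot close it. You propose to choose the anchors $c_1,\dots,c_r$ generically so as to avoid finitely many cross-block equations $c_sc_t^{-1}=\gamma_{s,t}$. But the dangerous coincidences are not all cross-block. Suppose $a,b\in T'$ lie in the same block and some $u\in\Om_{T'}(a)$, $v\in\Om_{T'}(b)$ share a secondary vertex of type $\al$ while $ab^{-1}\ne\ph(e)\ph(f)^{-1}$ --- a configuration compatible with $\al$-admissibility (for instance, two distinct $\sim_{\Om_{T'}}$-classes of size $2$ both associated with one secondary vertex of degree $\ge 4$). After translating the block by its common right factor, the quantity tested in Case~1, namely $\zeta(x_a)\zeta(x_b)^{-1}=ab^{-1}$, does not depend on the anchors at all, so the reconstruction aborts for \emph{every} choice of $c_s$; conversely, any assignment that passes all Case~1 tests forces the two $\sim$-classes to merge, which increases $N_\al$ by $2$ and changes the right-hand side of the inequality. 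Hence genericity of the anchors cannot simultaneously secure ``no abort'' and $m(\Om_T)=m(\Om_{T'})$ in this situation; you would need a further argument --- either that admissibility excludes such configurations, or that the inequalities lost this way are redundant for the LP-problem (they are weaker, since $N_\al$ only increases under merging) --- and none is supplied. (The paper itself disposes of this point with ``it is not difficult to see,'' so you have correctly located the delicate spot without resolving it.) A secondary issue: your pigeonhole count is not actually carried out --- disjointness of the translates alone already forbids up to $d^2-1$ values plus the previously used anchors, so the additional forbidden values $\gamma_{s,t}$ must be counted before one can conclude that everything fits inside $d^2+d$.
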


To prove Lemma~\ref{lemx}, we first establish an auxiliary lemma.

\begin{lem}\label{lemy} Suppose
$$
\Om_{T'} : T' \to S_1(V_PY_1)
$$
is an $\al$-admissible function, where $2 \le |T'| \le d$, and $T' = E_1 \cup \dots \cup  E_r$ is a partition of $T'$ into equivalence classes $[a]_{\approx_{\Om_{T'}} }$ of the equivalence relation $\approx_{\Om_{T'}}$.
Then there are elements $h_1, \dots, h_r \in G_\al$ such that the set $$T :=  E_1 h_1 \cup \dots \cup E_r h_r$$ has the cardinality $ |T|= |T'|$ and every set $E_i h_i$, $i = 1, \dots, r$, contains an element from the set  $C(\al, d)$.
\end{lem}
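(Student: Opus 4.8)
The plan is to pick the translating elements $h_1,\dots,h_r$ greedily, one $\approx_{\Om_{T'}}$-class at a time, exploiting the fact that right-translating an entire class $E_i$ by a single $h_i$ leaves all of its internal data unchanged --- the differences $ab^{-1}$ with $a,b\in E_i$ are translation invariant --- while distinct classes impose no constraints on one another. Thus the only two things to arrange are that each translate $E_ih_i$ meets the fixed set $C(\al,d)$ and that the translates $E_1h_1,\dots,E_rh_r$ are pairwise disjoint; the latter will give $|T|=\sum_i|E_ih_i|=\sum_i|E_i|=|T'|$, since $x\mapsto xh_i$ is injective.

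First I would record that every block $E_i$ has at least two elements. Indeed, $\al$-admissibility of $\Om_{T'}$ says each $\sim_{\Om_{T'}}$-class contains at least two pairs, and if $(a,u)\sim_{\Om_{T'}}(b,v)$ with $a=b$ but $(a,u)\ne(b,v)$, then the defining edges $e,f\in EY_1$ satisfy $\ph(e)\ph(f)^{-1}=ab^{-1}=1$ in $G_\al$, so $\ph(e)=\ph(f)$ and $e_+=f_+\in V_S Y_1$; by property (P2) for $Y_1$ this forces $e=f$, hence $u=v$, a contradiction. So pairs in a $\sim_{\Om_{T'}}$-class have pairwise distinct first coordinates, and therefore every $a\in T'$ is $\approx$-related to some $b\ne a$ in $T'$. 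Consequently no $[a]_{\approx_{\Om_{T'}}}$ is a singleton, i.e. $2\le|E_i|\le|T'|\le d$, and hence $|E_i|(d-|E_i|)\le d^2/4<d^2+d$.

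Next I would carry out the greedy step. Fix a representative $a_i\in E_i$; then any $h_i$ in $a_i^{-1}C(\al,d)$, a set of exactly $|C(\al,d)|=d^2+d$ elements, already yields $a_ih_i\in C(\al,d)\cap E_ih_i$, taking care of the first requirement. Given $h_1,\dots,h_{i-1}$ with $E_1h_1,\dots,E_{i-1}h_{i-1}$ pairwise disjoint, an element $h$ fails disjointness against them precisely when $ah=bh_j$ for some $a\in E_i$, $b\in E_j$, $j<i$, i.e. $h=a^{-1}bh_j$; the number of such $h$ is at most $|E_i|\sum_{j<i}|E_j|\le|E_i|(|T'|-|E_i|)\le|E_i|(d-|E_i|)<d^2+d$. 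Since this count is strictly smaller than $|a_i^{-1}C(\al,d)|$, a valid non-forbidden $h_i$ exists; iterating over $i=1,\dots,r$ and setting $T:=E_1h_1\cup\dots\cup E_rh_r$ produces a set with $|T|=|T'|$ each of whose blocks $E_ih_i$ meets $C(\al,d)$, as claimed.

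The only genuine obstacle is the bookkeeping: checking that translating a whole $\approx_{\Om_{T'}}$-class is harmless, because $ab^{-1}$ is translation invariant and there are no relations across distinct classes, and confirming that the crude count $d^2/4$ of collision-inducing translates stays below the $d^2+d$ available candidates. This is exactly why $|C(\al,d)|$ was taken to be $d^2+d$, and the estimate $|E_i|\ge2$ proved above is the key input that makes the count comfortable.
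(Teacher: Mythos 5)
Your proof is correct and follows essentially the same route as the paper's: both choose the $h_i$ greedily one class at a time and win by a pigeonhole count against the $d^2+d$ elements of $C(\al,d)$ (the paper packages the count as an injection of the unused elements of $C(\al,d)$ into fewer than $d^2$ index pairs, you count the forbidden translates directly as at most $|E_i|\sum_{j<i}|E_j|\le d^2/4$). The only quibble is that your closing remark overstates the role of $|E_i|\ge 2$ --- the bound $|E_i|(d-|E_i|)\le d^2/4<d^2+d$ holds for any $1\le|E_i|\le d$, so that observation, though correctly proved, is not actually needed.
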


\begin{proof} By induction on $i$, where  $1 \le i \le r$, we will prove the existence of elements
$h_1, \dots, h_i \in G_\al$ with the property that the set
$E_1 h_1 \cup \dots \cup E_i h_i$ has the cardinality
$$
\sum_{j=1}^i |E_j h_j|
$$
and every set $E_j h_j$, $j = 1, \dots, i$, contains an element from $C(\al, d)$.
\medskip

If $i=1$, then we set $h_1 := b^{-1} c$, where $b \in E_1$ and $c \in C(\al, d)$.
\medskip

Making the induction hypothesis, assume that there are elements
$h_1, \dots, h_i \in G_\al$ with the desired properties.
\medskip

To make the induction step from $i$ to $i+1$, denote
$$
C_i(\al, d) :=   C(\al, d) \cap  (E_1 h_1 \cup \dots \cup E_i h_i )
$$
and let $b \in E_{i+1}$. For an element $c \in C(\al, d) \setminus C_i(\al, d)$, we consider the set
$$
R_c := E_{i+1} b^{-1} c .
$$
Clearly, $R_c $ contains an element from $C(\al, d)$ and if $R_c $ is disjoint from the set  $E_1 h_1 \cup \dots \cup E_i h_i$, then we can set
$$
h_{i+1}  :=   b^{-1} c .
$$
Therefore, we may assume that $R_c$ contains an element from $E_1 h_1 \cup \dots \cup E_i h_i$ for every $c \in C(\al, d) \setminus C_i(\al, d)$.
\medskip

Suppose that elements in $E_1 h_1 \cup \dots \cup E_i h_i$ are indexed by integers from 1 to $|E_1 h_1 \cup \dots \cup E_i h_i|$ and elements in  $R_c = E_{i+1} b^{-1} c$, where $b$ and $c$ are chosen as above,  are indexed by integers from 1 to $|E_{i+1}|$ so that, for every $e \in E_{i+1}$,  the index of $e  b^{-1} c \in R_c$ is equal to that of $e \in E_{i+1}$.  In other words, we wish to keep indices stable when multiplying $E_{i+1}$ by  $b^{-1} c$.
\medskip

Making use of these indices, we fix an element $b \in E_{i+1}$ and, for every
$$
c \in C(\al, d) \setminus C_i(\al, d) ,
$$
we consider the pair $( j_R(c),  j_E(c))$ of indices  $j_R(c),  j_E(c)$  in $R_c = E_{i+1} b^{-1} c$  and in  $E_1 h_1 \cup \dots \cup E_i h_i$, resp., of an element of the intersection
$$
R_c  \cap (E_1 h_1 \cup \dots \cup E_i h_i)
$$
which is not empty as was assumed above.
\medskip

Suppose that  $( j_R(c_1),  j_E(c_1)) =  ( j_R(c_2),  j_E(c_2))$. Then it follows from the definitions that if $e_1, e_2 \in  E_{i+1}$ are such that
\begin{align*}
e_1 b^{-1} c_1 & \in R_{c_1} \cap (E_1 h_1 \cup \dots \cup E_i h_i) , \\
 e_2 b^{-1} c_2 & \in R_{c_2}\cap (E_1 h_1 \cup \dots \cup E_i h_i) ,
\end{align*}
then $e_1 = e_2$ and $e_1 b^{-1} c_1 = e_2 b^{-1} c_2$ in $G_\al$. These equalities imply that $c_1 = c_2$. Therefore,  for distinct elements $c_1, c_2 \in C(\al, d) \setminus C_i(\al, d)$, the pairs
$$
(j_R(c_1),  j_E(c_1)) ,  \quad  (j_R(c_2),  j_E(c_2))
$$
are also distinct. However, the number of elements $c$ in   $C(\al, d) \setminus C_i(\al, d)$ is
$$
|C(\al, d)|- | C_i(\al, d)| \ge (d^2 +d)-d = d^2
$$
and the number of all such pairs  $(j_R(c),  j_E(c))$  is less than $d^2$.  This contradiction completes the induction step and Lemma~\ref{lemy} is proved.
\end{proof}

\begin{proof}[Proof of Lemma~\ref{lemx}] Utilizing the notation of Lemma~\ref{lemy}, we let
 $$
 T' = E_1 \cup \dots \cup  E_r
 $$
 and let $h_1, \dots, h_r \in G_\al$ be elements  such that the set
 $$
 T :=  E_1 h_1 \cup \dots \cup E_r h_r
 $$
 has cardinality $ |T|= |T'| = k$ and every set $E_i h_i$, $i = 1, \dots, r$, contains an element from $C(\al, d)$.
 \medskip

Define a function
$$
\wht  \Om : T \to    S_1(V_PY_1)
$$
so that
if $a \in E_i$, $i = 1, \dots, r$, then  $\wht  \Om(ah_i) :=  \Om_{T'}(a)$.
 \medskip

Define $C := C(\al, d) \cap T$ and let $C = \{ c_1, \dots, c_{|C|} \}$.
Introducing more notation, denote
$$
T = \{ c_1, \dots, c_{|C|}, b_1, \dots, b_{k-|C|} \}
$$
and $Z = \{ z_1, \dots, z_{k-|C|} \}$.
\medskip

We also define a function
$$
\Om_{C \cup Z} :  C \cup Z  \to S_1(V_PY_1)
$$
by setting $\Om_{C \cup Z}(c_i) := \wht \Om (c_i)$ and $\Om_{C \cup Z} (z_j) :=\wht  \Om (b_j)$ for all $i,j$.
In view of Lemma~\ref{lem3t}, it is not difficult to see that the function $\Om_{C \cup Z} $ is acceptable, $\zeta(C \cup Z) = T$, and if
 $$
 \Om_{T} : T \to S_1(V_PY_1)
 $$
 is the function   defined by
$\Om_{T}(\zeta(a)) := \Om_{C \cup Z}(a)$ for every $a \in  C \cup Z$,
where  $\zeta(c) = c$ for $c \in C$, then the following hold true. The   function $\Om_{T}$ is $\al$-admissible, $\Om_{T}= \wht  \Om$, and   the two inequalities \eqref{inqa} if $\al =1$   or the two  inequalities   \eqref{inqb} if $\al =2$, corresponding to  $\Om_{T'}$ and to $\Om_{T}$, are identical.
Lemma~\ref{lemx} is proved.
\end{proof}

To finish the proof of part (d) of Theorem~\ref{th1}, we remark that, by  Lemma~\ref{lemx}, the LP-problem  $\max \{ - x_s \mid \SLI_{d}[Y_1] \}$ can be algorithmically written down. Solving this  LP-problem   we obtain, by Lemma~\ref{lem5}, the number  $- \sigma_d( Y_1)  \brr (Y_1)$ which is equal to  $- \sigma_d( H_1)  \brr (H_1)$ by Lemma~\ref{lemsup}. Since the number $\brr (Y_1) = \brr (H_1)$ is readily computable off the graph $Y_1$ (recall $\brr(Y_1) = |E Y_1 | /2 - |VY_1 |$),  it follows that the coefficient $\sigma_d( Y_1)$ is also computable.
\medskip

Since the LP-problem $\max \{ - x_s \mid \SLI_d[Y_1] \} $ can be effectively written down, its dual problem
\eqref{dlpd} can also  be effectively constructed. Using the notation of the foregoing proof of parts (a)--(c), we observe that a vertex solution $y_V = y_V(d)$ to \eqref{dlpd} can be computed, see \cite{S86}.
Hence, a  combination with repetitions
$Q_V$, such that  $\sol_d(Q_V) = y_V$ and  all entries in $\eta(Q_V)$  are coprime, is also computable,  see Lemma~\ref{lem4}.
\smallskip

Now, as in the proof of Lemma~\ref{lem2},  we can construct a graph $ Y_{2, Q_V} = \Psi(H_2^*)$ from  $Q_V$
 and observe that this construction can be done algorithmically. The proof of part~(d) is complete.
\medskip

 To show part (e), we note that if both groups $G_1, G_2$ are finite
 then any irreducible finite $\A$-graph  $\Psi$ has the property that $\deg u \le \max \{ |G_1 |, |G_2| \}$
 for every secondary vertex $u \in V_S \Psi$. Hence, setting
 $$
 d_{m}:= \max \{ |G_1 |, |G_2| \} ,
 $$
 we obtain  that $\SLI[Y_1] =  \SLI_{d_{m}}[Y_1]$ and so,
 by Lemma~\ref{lem5}, $\sigma( Y_1) = \sigma_{d_{m}}( Y_1)$.  Since the coefficient
 $\sigma_{d_{m}}( Y_1) =  \sigma_{d_{m}}( H_1)$
 is rational and computable by part (d), the  number $\sigma(H_1) = \sigma( Y_1)$ is also rational and computable.
 Theorem~\ref{th1} is proved.  \end{proof}

\begin{T2}  Suppose that $\FF =G_1 * G_2$ is the free product of two nontrivial finite groups $G_1,  G_2$ and $H_1$ is a subgroup of $\FF$ given by a finite generating set $\Ss$ of words over the alphabet
$ G_1 \cup G_2$. Then the following are true.
\begin{enumerate}

\item[(a)]  In deterministic polynomial time {}in the size of $\Ss$, one can
detect whether $H_1$ is factor-free and noncyclic and, if so, one can construct an  irreducible graph $\Psi_o(H_1)$  of $H_1$.
\smallskip

\item[(b)]  If  $H_1$ is factor-free and noncyclic, then, in deterministic exponential time {}in the size of $\Ss$, one can write down and solve an LP-problem $\PP = \max\{ cx \mid Ax \le b  \}$ whose solution is equal to $-\sigma(H_1) \brr (H_1)$. In particular, the $WN$-coefficient $\sigma(H_1)$ of $H_1$ is computable in exponential time {}in the size of $\Ss$.
\smallskip

\item[(c)]  If  $H_1$ is factor-free and noncyclic, then  there exists a finitely generated  factor-free subgroup
$H_2^* = H_2^*(H_1)$ of $\FF$  such that $$\bar \rr(H_1, H_2^*) =  \sigma(H_1)  \bar \rr(H_1) \bar \rr( H_2^*) $$ and the size of an irreducible core graph $\Psi(H_2^*)$ of $H_2^*$  is at most doubly exponential {}in the size of $\Psi(H_1)$. Specifically,
$$
| E \Psi(H_2^*) |  <  2^{ 2^{ | E \Psi(H_1) |/4 + \log_2 \log_2 (4d_m)  } } ,
$$
where   $\Psi(H_1)$ is an  irreducible core graph of $H_1$,   $| E\Psi |$ denotes the number of oriented edges of the graph $\Psi$, and  $d_m := \max( |G_1|, |G_2|)$.

In addition, an irreducible core graph $\Psi(H_2^*)$ of $H_2^*$  can be constructed
in deterministic exponential time {}in  the size of  $\Ss$ or  $\Psi(H_1)$.
\end{enumerate}
\end{T2}

\begin{proof}[Proof of Theorem~1.2] Part (a) follows from Lemma~\ref{Lm1}.
\medskip

To show part (b), we first observe that, in the case when $G_1$ and $ G_2$ are finite, we can  effectively write down  the system   $\SLI_{d_{m}}[Y_1]$, where $d_{m} =\max \{ |G_1|, |G_2| \}$,  and this can be done in exponential time {}in the size of $Y_1 := \Psi(H_1)$. Indeed,  the number of all functions
$$
\Omega_T :  T \to   S_1(V_P Y_1) ,
$$
where $T \in  S_2(G_\al)$ and $|T| \le d_m$,
 is bounded above by $2^{d_{m}}  2^{ | V_P Y_1|^{d_{m}}} = 2^{d_{m}}  2^{ (| E Y_1|/4)^{d_{m}}}$. Hence, we can construct all such functions in exponential time. We can also check whether every such function is $\al$-admissible in polynomial time {}in the size of  $Y_1$.
 Note that the input is the generating set $\Ss$ while the orders of finite groups
 $G_\al$, $\al = 1,2$, and the parameter $d_{m}$ are regarded as constants. Hence, all inequalities of the system $\SLI_{d_{m}}[Y_1]$ that are defined by means of $\al$-admissible functions  $\Omega$ as above, see definitions \eqref{inqa}--\eqref{inqb}, can be computed in exponential time {}in the size of $Y_1$.
\smallskip

Furthermore, by  Lemma~\ref{Lm1}, the size of  the graph $Y_1$ is polynomial {}in the size of the generating set $\Ss$.
By Theorem~1.1(e),  $\SLI[Y_1] =   \SLI_{d_{m}}[Y_1]$. Hence, the size of the system
$\SLI[Y_1] =   \SLI_{d_{m}}[Y_1]$ is exponential {}in the size of  $\Ss$. It is clear that the size of the primal LP-problem $\max \{ - x_s \mid \SLI_{d_{m}}[Y_1] \}$ as well as the size of the dual problem \eqref{dlpd}  are also exponential {}in the size of $Y_1$ or in the size of   $\Ss$.
By Theorem~\ref{th1} and Lemma~\ref{lemsup}, an optimal solution to the  dual problem \eqref{dlpd}  is equal to
$$
- \sigma_{d_{m}}( Y_1)  \brr (Y_1) = - \sigma_{d_{m}}( H_1)  \brr (H_1)
= - \sigma( Y_1)  \brr (Y_1) = - \sigma( H_1)  \brr (H_1) .
$$
It remains to mention that an LP-problem $\max \{cx \mid Ax \le b \} $ can be solved in polynomial time  {}in the size of the problem, see \cite{S86}, and that the reduced rank $\brr(Y_1) = \brr(H_1)$ can be computed in polynomial time {}in the size of  $Y_1$.
\medskip

To prove part (c), we recall that the size of the dual LP-problem \eqref{dlpd}, similarly to the size of the primal LP-problem  $\max \{ - x_s \mid \SLI_{d_{m}}[Y_1] \} $, is  exponential ({}in the size of $Y_1$ or $\Ss$) that a vertex solution  $y_V = y_V({d_{m}})$  to  \eqref{dlpd} can be computed in polynomial time {}in  the size of the dual LP-problem \eqref{dlpd}, see \cite{S86}. Note that here and below we use the notation of the proofs of proofs of Lemmas~\ref{lem2},~\ref{lem6}.
Hence, a vertex solution $y_V$  to \eqref{dlpd}  can be computed in exponential time ({}in the size of $Y_1$ or  $\Ss$). Using the function $ \sol_{d_{m}}$, we can compute    a  combination with repetitions $Q_V$,  such that  $\sol_{d_{m}}( Q_V) =   y_V $ and entries of  $Q_V$ are coprime, in polynomial
time {}in the size of $y_V$. The size of the vertex $y_V$, as was established in the proof of Lemma~\ref{lem6}, see
\eqref{cr1}--\eqref{cr3}, \eqref{cr5},  is  exponential. Hence, the  combination $Q_V$  can also be  computed in
 exponential time.
\smallskip

The inequality
$$
| E \Psi(H_2^*) | <  2^{  2^{| E \Psi(H_1) |/4 +  \log_2 \log_2 (4d_{m})  } },
$$
where, as above, $d_{m} = \max (|G_1|, |G_2| )$,  follows from  part~(d) of Theorem~\ref{th1}.

In view of inequalities \eqref{cr3a} and \eqref{cr5a}, we obtain that
\begin{align}\label{cr6}
|  Q_V| < r (2 d_{m})^{r-1} <  2^{ 2^{| E Y_1 |/4  + \log_2 \log_2 (4d_{m})  } }  .
\end{align}

This bound, in particular, means that  every inequality $q \in \SLI_{d_{m}}(Y_1)$ occurs in $Q_V$ less than
$$
2^{ 2^{| E Y_1 |/4  + \log_2 \log_2 (4d_{m})  } }
$$
times, hence, the number ${n}_{Q_V}(q)$ of occurrences of $q$ in $Q_V$ can be written by using
at most $2^{| E Y_1 |/4  + \log_2 \log_2 (4d_{m})  }$ bits.
\smallskip

As in the proofs of Lemmas~\ref{lem2},~\ref{lem6},  we construct a graph $Y_{2, Q_V}$ whose secondary vertices are in bijective correspondence with inequalities of $Q_V$ and whose primary vertices  are defined by means of an involution $\iota_V$ on the set of terms $\pm x_{D}$ of the left hand sides $q^L$ of the inequalities $q \in Q_V$.

\begin{lem}\label{lem8}
The graph $Y_{2, Q_V}$ can be constructed in  deterministic exponential time {}in the size of $Y_1$.
\end{lem}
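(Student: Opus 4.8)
The plan is to produce, instead of an explicit list of the vertices and edges of $Y_{2, Q_V}$, a \emph{succinct} description of it: by \eqref{cr6} the graph $Y_{2, Q_V}$ may have doubly exponentially many vertices, so in exponential time one can at best write down a description of exponential size in which vertices are named by exponentially long bit strings and edges are recorded in blocks, in the spirit of the constructions of Lemmas~\ref{lem2} and \ref{lem6}. The description I have in mind consists of two ingredients. The first is the list of those inequalities $q$ of $\SLI_{d_{m}}[Y_1]$ that occur in $Q_V$, each paired with its multiplicity $n_{Q_V}(q)$; here an inequality $q$ of type $\al(q)$ with parameter $k(q) \le d_{m}$ records a block $B_q$ of $n_{Q_V}(q)$ secondary vertices of $Y_{2, Q_V}$, all of type $\al(q)$ and degree $k(q)$, each carrying the $k(q)$ incident edges whose $\ph$-labels are the elements $b_{1,q}, \dots, b_{k(q), q}$ of $T_q$. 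The second is a block-structured involution $\iota_V$ on the occurrences, counted with multiplicity, of the terms $\pm x_A$ in the left hand sides $q^L$, $q \in Q_V$; this $\iota_V$ prescribes how the initial vertices of those edges are identified in pairs to form the primary vertices of $Y_{2, Q_V}$, exactly as in the proof of Lemma~\ref{lem2}.

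First I would check that the first ingredient has exponential size and is already available. A vertex solution $y_V = y_V(d_{m})$ of \eqref{dlpd} has at most $r \le n_{\inq} \le 2^{|E Y_1|/4}$ nonzero components, see \eqref{cr5}, so the combination $Q_V$ with coprime entries and $\sol_{d_{m}}(Q_V) = y_V$ involves at most $r$ distinct inequalities. Each such $q$ has the form \eqref{inqa}--\eqref{inqb} with $k(q) \le d_{m}$ variables indexed by nonempty subsets of $V_P Y_1$, hence is of size polynomial in $|E Y_1|$, while $n_{Q_V}(q) < (2 d_{m})^{r-1}$ by \eqref{cr3a}, \eqref{cr6}, hence is writable with at most $2^{|E Y_1|/4 + \log_2 \log_2(4 d_{m})}$ bits. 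The list of pairs $(q, n_{Q_V}(q))$ with $n_{Q_V}(q) > 0$ was already computed in exponential time earlier in the present proof, so this ingredient is free.

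Next comes the block-structured involution. By \eqref{inqa}--\eqref{inqb}, a term of $q^L$ has the form $(-1)^{\al(q)} x_A$ with $A = \Om_{T_q}(b_{i,q})$ for a slot $i \in \{ 1, \dots, k(q)\}$, the set $A$ being read directly off the variables of $q$; hence the term-occurrences in $\sum_{q \in Q_V} q^L$ sort into at most $\sum_{q} k(q) \le r d_{m}$ \emph{slot types} $(q, i)$, within each of which they are indexed by a single integer $j \in \{ 1, \dots, n_{Q_V}(q)\}$. I would make one pass over the $\le r$ inequalities of $Q_V$, each pass polynomial in $|E Y_1|$, to tabulate, for every nonempty $A \subseteq V_P Y_1$, the slot types contributing $+x_A$ (those with $\al = 2$ and $\Om_{T_q}(b_{i,q}) = A$) and those contributing $-x_A$ (those with $\al = 1$); by the identity $\sum_{q \in Q_V} q^L = -C(Q_V) x_s$ the total multiplicities of $+x_A$ and of $-x_A$ coincide. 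One then defines $\iota_V$ by fixing once and for all a linear order on the slot types and listing, for each $A$, the $+x_A$ occurrences and the $-x_A$ occurrences in the induced lexicographic order on (slot type, copy index $j$), pairing the $t$-th entry of one list with the $t$-th entry of the other. The resulting matching is recorded by a table of at most $r d_{m}$ blocks, each an interval of occurrences with endpoints being integers less than the bound of \eqref{cr6}, hence of at most $2^{|E Y_1|/4 + \log_2 \log_2(4 d_{m})}$ bits; so $\iota_V$ has a description of exponential size, computable in deterministic exponential time.

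Finally I would observe that any involution realizing the cancellation in $\sum_{q \in Q_V} q^L = -C(Q_V) x_s$ — in particular the block-structured $\iota_V$ above — yields, exactly as in the proof of Lemma~\ref{lem2}, a finite irreducible $\A$-graph $Y_{2, Q_V}$ with property (Bd) for $d = d_{m}$, and by Lemmas~\ref{lem4}, \ref{lem5}, \ref{lem6} the associated vector $\sol_{d_{m}}(\inq_{d_{m}}(V_S Y_{2, Q_V}))$ still attains the minimum of \eqref{dlpd}, so that $\brr(\core(Y_1 \times Y_{2, Q_V})) = \sigma_{d_{m}}(Y_1) \brr(Y_1) \brr(Y_{2, Q_V})$ and, by the argument in Lemma~\ref{lem6}, $Y_{2, Q_V}$ is connected. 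In this description a secondary vertex is named by a pair $(q, j)$ with $j \le n_{Q_V}(q)$ and a primary vertex by an unordered pair of triples $(q, j, i)$ matched by $\iota_V$, in either case an exponentially long bit string; consulting the tables above then lets one decide in exponential time whether two given vertices are joined by an edge. I expect the only real obstacle to be precisely this: arranging $\iota_V$ so that its description stays of exponential and not doubly exponential size. This is exactly what the block structure buys, since the doubly exponentially many term-occurrences sort into only exponentially many slot types, within each of which they form an interval of integers, so the pairing can be specified by interval endpoints rather than occurrence by occurrence.
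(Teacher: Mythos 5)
Your proposal is correct and follows essentially the same route as the paper: both represent $Y_{2,Q_V}$ succinctly by the list of pairs $(q, n_{Q_V}(q))$ together with a block-structured involution $\iota_V$, the key point in either case being that the doubly exponentially many term-occurrences $\pm x_A$ fall into only exponentially many intervals that are matched to one another order-preservingly, so $\iota_V$ admits an exponential-size description computable in exponential time. The only difference is bookkeeping: the paper organizes the blocks via auxiliary bipartite graphs $\Lambda_D$ equipped with a greedily computed edge-weight function $\omega_D$, whereas you obtain an equivalent block decomposition by lexicographically sorting the $+x_A$ and the $-x_A$ occurrences by slot type and copy index and zipping the two lists position by position.
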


\begin{proof} We need to explain how to compute the  involution $\iota_V$ as above in exponential time ({}in the size of $Y_1$).
To do this, for each variable $x_{D}$ of the system $\SLI_{d_{m}}(Y_1)$, see  \eqref{slio}, we consider a  graph $\Lambda_{D}$ whose set of vertices is the subset
 $$
 R_V := \{ q \mid q \in Q_V \}
 $$
 of $\SLI_{d_{m}}(Y_1)$ formed with the inequalities of $Q_V$. If $q_1, q_2 \in R_V$ are distinct, $q_1^L$ contains the term $ x_{D}$ and $q_2^L$ contains the  term $- x_{D}$,   then we draw an edge  in $\Lambda_{D}$ that connects $q_1$ and $q_2$.  In other words, if there is a potential cancellation between terms $\pm x_D$  in the sum $q_1^L + q_2^L$ then  $\Lambda_{D}$  contains an edge  that connects $q_1$ and $q_2$.
 \smallskip

It is clear that $\Lambda_{D}$  is a bipartite graph so that every edge connects a vertex of type \eqref{inqa} and a vertex of type \eqref{inqb}.
\smallskip

Consider a weight function
\begin{align}\label{wjD}
\omega_{D} : E  \Lambda_{D} \to \mathbb Z ,
\end{align}
where $\mathbb Z$ is the set of integers, such that
$\omega_{D}(e^{-1}) = \omega_{D}(e) \ge 0$ and
$$
\sum_{e_- = q} \omega_{D}(e) = n_q(x_D)   {n}_{Q_V}(q) ,
$$
where  $n_q(x_D) $ is the number of times the term $x_D$ or $-x_D$ occurs in $q^L$ and ${n}_{Q_V}(q)$ is  the number of occurrences of $q$ in $Q_V$. Clearly, $n_q(x_D) {n}_{Q_V}(q)$ is  the total number of
occurrences of terms  $\pm x_D$  in the subsum
\begin{align*}
\underbrace{q^L + \cdots +q^L}_{    {n}_{Q_V}(q)  \   \text {times } }
\end{align*}
of  the sum  $\sum_{q' \in Q_V} (q')^L$. Note that ${n}_{Q_V}(q) = \eta_j(Q_V)$ if $q = q_j$ in the notation of
$\eqref{solQ}$.
\smallskip

Our nearest goal is to show that such a weight function $\omega_{D}$  can be computed in exponential time for every index $D$.
\smallskip

Let the edge set
$$
E \Lambda_{D}  =  \{ e_1, e_1^{-1}, e_2, e_2^{-1}, \dots, e_{ | E \Lambda_{D} |/2  }, e_{ | E \Lambda_{D} |/2  }^{-1}     \}
$$
of the graph $\Lambda_{D}$  be indexed as indicated and let $(e_i)_-$ be a vertex of type  \eqref{inqa} for every $i$.
\smallskip

We will define the numbers
$\omd(e_i)$ by induction for $i =1,2, \ldots,  | E \Lambda_{D} |/2$  by the following procedure which also assigns
intermediate weights $\omega_{D}(q)$ to vertices $q \in R_V$ of $\Lambda_{D}$.
\smallskip

Originally, we set
$$
\omega_{D}(q) :=  n_q(x_D)    {n}_{Q_V}(q)
$$
for every $q \in R_V$.  For  $i \ge 1$, if the edge $e_i$ connects $q_1$ and $q_2$ then we set
$$
\omd(e_i) := \min(\omd(q_1), \omd(q_2) )
$$
and redefine the weights of $q_1$ and $q_2$ by setting
\begin{align*}
\omd'(q_1) & := \omd(q_1) - \min(\omd(q_1), \omd(q_2) ) , \\
\omd'(q_2) & := \omd(q_2) - \min(\omd(q_1), \omd(q_2) ) ,
\end{align*}
where $\omd'(q_1) $ denotes the new weight.
\smallskip

 Note that the assignment of a nonnegative weight $\omd(e_i)$ to the edge $e_i$, connecting $q_1$ and $q_2$, can be interpreted as making $\omd(e_i)$ cancellations between terms  $\pm  x_{D}$ of the subsums
\begin{align*}
 \underbrace{q_1^L + \cdots +q_1^L}_{\text { ${n}_{Q_V}(q_1)$ times }}  \quad \text{and}  \quad
 \underbrace{ q_2^L + \cdots +q_2^L}_{\text { ${n}_{Q_V}(q_2)$ times }}
\end{align*}
 of the sum in the left hand side of the equality
\begin{align}\label{QVr}
 \sum_{q \in Q_V} q^L = -2\brr(Y_1)x_s .
 \end{align}

 Analogously,  the intermediate weight $\omd(q_1)$   of a vertex  $q_1 \in V \Lambda_{D}$  can be interpreted as the number of terms  $\pm  x_{D}$ of the subsum
 $$
 \underbrace{
 q_1^L + \cdots +q_1^L }_{\text { ${n}_{Q_V}(q_1)$ times }}
 $$
 which are still uncancelled in the left hand side of \eqref{QVr}.
 \smallskip

 Therefore, in view of the equality \eqref{QVr},   in the end of this process, we will obtain that the weights $\omd(q)$ of all vertices $q \in R_V$ are zeros, i.e.,
 cancellations of the terms $\pm x_{D}$ are complete, and the weights $\omd(e_i)$ of all edges $e_i$  have desired properties.
 \smallskip

 Clearly,  the foregoing inductive procedure makes it possible to compute such a weight function $\omd$ in polynomial time {}in the size of the graph $\Lambda_D$ and in the size of  numbers $n_{Q_V}(q)$, $q \in R_V$, written in binary.  Hence, we can compute weight functions $\omd$ for all $D$  in exponential time.
\smallskip

Now we will define the involution $\iota_V$ based on the weight functions $\omd$.
\smallskip

Let  elements of the set $R_V = \{ q_1, \ldots, q_{|R_V|} \}$ be indexed as indicated and  let  elements of the combination
\begin{align}\label{QVc}
\begin{split}
Q_V  =  [[ & q_{1,1}, q_{1,2}, \ldots, q_{1,{n}_{Q_V}(q_1)}, \\
& \ldots,  \\
& q_{i,1}, q_{i,2}, \ldots, q_{i,{n}_{Q_V}(q_i)}, \\
& \ldots,  \\
   &  q_{|R_V|,1}, q_{|R_V|,2}, \ldots, q_{|R_V|,{n}_{Q_V}(q_{|R_V|})} ]] ,
\end{split}
\end{align}
where $q_{i,j} = q_i \in R_V$ for all possible $i, j$, be double indexed as indicated according to the indices introduced on elements of $R_V$.
\smallskip

Since the secondary vertices of the graph $Y_{2, Q_V}$  are in bijective correspondence with elements of $Q_V$, see the proof of Lemma~\ref{lem2}, we can also write
$$
V_S Y_{2, Q_V} = \{ u_{i,j} \mid 1 \le i \le |R_V|, \ 1 \le j \le {n}_{Q_V}(q_i) \} ,
$$
where
\begin{align}\label{uqC}
 u_{i, j}   \mapsto    q_{i, j}
\end{align}
under this correspondence.
\smallskip

Let $q_i \in R_V$ be fixed and  let
$$
q_{m_1(i)}, \ldots, q_{m_{t_i}(i)}
$$
be all vertices of $\Lambda_{D}$, where $ m_1(i) < \cdots < m_{t_i}(i)$, that are connected to $q_i$ by edges $f_1, \ldots, f_{t_i}$, resp.,  in $\Lambda_{D}$ with positive weights $\omd(f_1), \ldots, \omd(f_{t_i})$, resp.  We assume that  $q_i$ is the terminal vertex of  the edges $f_1, \ldots, f_{t_i}$.
\smallskip

Recall that  $q_i^L$ contains  $n_{q_i}(x_D) \ge 1$ terms $\pm x_{D}$,  here  the sign is a minus if
$q_i$ has type \eqref{inqa} and  the sign is a plus if
$q_i$ has type \eqref{inqb}.
\smallskip

According to the weights  $\omd(f_1), \ldots, \omd(f_{t_i})$, we will define $( D, i, t)$-blocks of consecutive terms $\pm x_{D}$ in the sum
\begin{align}\label{seqq}
q_{i,1}^L + q_{i,2}^L + \cdots  + q_{i,{n}_{Q_V}(q_i)}^L ,
\end{align}
see \eqref{QVc}, in the following manner.  (Here and below we disregard all terms $\pm x_B$, where $B \ne D$, in \eqref{seqq}  when we talk about consecutive terms $\pm x_{D}$ in \eqref{seqq}.)
\smallskip

The  $( D, i, 1)$-block consists of the first $\omd(f_1)$  consecutive  terms $\pm x_{D}$    in the sum
\eqref{seqq}.
The $(D, i, 2)$-block consists of the next $\omd(f_2)$   consecutive  terms $\pm x_{D}$ in the sum  \eqref{seqq}  and so on. Note that the first   term $\pm x_{D}$  of the  $(D, i, 2)$-block  is $(\omd(t_1) +1)$st term  $\pm x_{D}$ in the sum  \eqref{seqq}  and the last term $\pm x_{D}$  of the  $(D, i, 2)$-block  is the $(\omd(t_1) +\omd(t_2))$th  term  $\pm x_{D}$ in the sum  \eqref{seqq}.
\smallskip

The $(D, i, t_i)$-block consists of the last $\omd(f_{t_i})$ consecutive  terms $\pm x_{D}$ in the sum   \eqref{seqq}.
Since
$$
\sum_{t=1}^{t_i} \omd(f_t) =    n_{q_i}(x_D)   {n}_{Q_V}(q_i)
$$
and $\omd(f_t) >0$ for every $t$, it follows that these  $(D, i, t)$-blocks, where $t = 1, \dots, t_i$ and
$ D, i$  are fixed, will form a partition of the sequence of    terms $\pm x_{D}$   of  the sum \eqref{seqq} into $t_i$ subsequences. Note that the terms  $\pm x_{D}$   of  the same summand $q_{i,j}^L $ of \eqref{seqq}  could be in different blocks when $n_{q_i}(x_D)   > 1$.
\smallskip

We  emphasize  that every $(D, i, t)$-block is associated with a  vertex $q_i \in V \Lambda_{D} =  R_V$ and with an edge $f_t$ of $\Lambda_{D}$
so that $f_t$ ends in $q_i$ and $\omd(f_t) >0$.  In particular, for every $(D, i, t)$-block,  associated with a vertex $q_i \in R_V$ and with  an edge $f_t$ of  $\Lambda_{D}$, we have another $(D, i', t')$-block,  associated with a vertex $q_{i'} \in R_V$ and with an edge $f'_{t'}$ of $\Lambda_{D}$, so that $q_{i'}  \ne q_i$ and $f'_{t'} = f_t^{-1}$. Here $ f'_1, \ldots, f'_{t'_{i'}}$ are the edges of $\Lambda_{D}$ defined for $q_{i'}$ in the same fashion as the edges  $f_1, \ldots, f_{t_i}$ of $\Lambda_{D}$ were defined for $q_{i}$.
Note that $i'' = i$ and $f''_{t''} = f_t$ in this notation.
\smallskip

We define the involution $\iota_V$ so that all the  terms   $\pm x_{D}$  of the $(D, i, t)$-block are mapped by $\iota_V$ to  the  terms $\mp x_{D}$ of  the $(D, i', t')$-block  in the natural increasing order of elements in the block.
\smallskip

In other words,  this definition of the involution  $\iota_V$  means that the primary  vertices of the graph
$Y_{2, Q_V}$, for details see the proof of Lemma~\ref{lem2}, that are connected by edges to the secondary vertices
\begin{align}\label{ui1}
 u_{i,  1},    u_{i,  2},   \ldots,   u_{i,   n_{Q_V}(q_i) }
\end{align}
of $Y_{2, Q_V}$,  see \eqref{uqC},  and that correspond to the terms $\pm x_D$ of the $(D,i,t)$-block, will be identified, in the increasing order, with the primary vertices that are connected by edges  to  the secondary vertices
\begin{align}\label{ui2}
 u_{i',  1},    u_{i',  2},   \ldots,   u_{i',   n_{Q_V}(q_{i'}) }
\end{align}
of $Y_{2, Q_V}$ and that correspond to the terms $\mp x_D$ of the $(D,i',t')$-block.
\smallskip

The labels to the edges of the graph $Y_{2, Q_V} $ are assigned as described in the  proof of Lemma~\ref{lem2}.  Specifically,  let $e_{1, j}, \ldots, e_{k_i, j}$ be all the edges of
$Y_{2, Q_V}$ that end in a secondary vertex $u_{i, j}$, i.e.,
$$
(e_{1,j})_+ = \cdots = (e_{k_i,j})_+ = u_{i, j} ,
$$
where $k_i = k(q_i)$. Furthermore, let $\{ b_1, \ldots, b_{k_i} \}$ denote the domain of an $\al_i$-admissible  function
$$
\Omega_{T_i} :  \{ b_1, \ldots, b_{k_i} \} \to S_1(V_P Y_1)
$$
that defines the inequality $q_i$.
Then we set
$$
\ph(e_{1, j}) := b_1 ,  \  \ldots, \   \ph(e_{k_i, j}) :=b_{k_i}  .
$$

Note that the primary vertices that are discussed above and that are connected by edges to vertices \eqref{ui1} will be precisely those  $e_{\ell, j}$, among
$(e_{1,j})_-, \ldots$,  $(e_{k_i,j})_-$ over all $j = 1, \ldots,     n_{Q_V}(q_i)$,   for which
$$
 \Omega_{T_i}(    \ph(e_{\ell, j}) ) =  \Omega_{T_i}(   b_\ell   )   = D .
$$
Similar remark can be made about the primary  vertices that are discussed  above and that are connected by edges to vertices  \eqref{ui2}.
\smallskip

It is clear that the foregoing construction of the involution  $\iota_V$ can be done in polynomial time in the total size of graphs $\Lambda_D$, weights  $\omd(e)$,    $e \in E\Lambda_D$,   and numbers $n_{Q_V}(q)$, $q \in R_V$, written in binary.
Therefore, we can compute  $\iota_V$ in exponential time {}in the size of $Y_1$ (or $\Ss$).
Thus the graph $Y_{2, Q_V}$ can also  be constructed in exponential time, as required.
The proof of Lemma~\ref{lem8} is complete.
\end{proof}

\smallskip

Since the graph $Y_{2,  Q_V}$  can be constructed in exponential time {}in the size of the generating set  $\Ss$, it follows from Lemma~\ref{lem6} that we can use $ Y_{2,  Q_V}$ as an  irreducible $\A$-graph  $\Psi(H_2^*)$  of  the subgroup $H_2^*$. Theorem~\ref{th2} is proved.  \end{proof}

\medskip
It is worthwhile to mention that our construction of the  graph $Y_{2, Q_V}$ is somewhat succinct (cf. the definition of succinct representations of  graphs in \cite{PCC}) in the sense that,
despite the fact that the size of $Y_{2, Q_V}$ could be doubly exponential, we are able to give a description of $Y_{2, Q_V}$  in exponential time ({}in the size of $Y_1$).  In particular, vertices of  $Y_{2, Q_V}$    are represented by exponentially long bit strings and edges of $Y_{2, Q_V}$  are  drawn in blocks. As a result,
we can find out in exponential time  whether two given vertices of  $Y_{2, Q_V}$  are connected by an edge labelled by  given letter $g \in G_\al$.

\begin{T3}
Suppose  that $\FF = \prod_{\alpha \in I}^* G_\alpha$ is
the  free product  of  nontrivial groups   $G_\al$, $\al \in I$, and $H_1$ is a
finitely generated factor-free noncyclic subgroup of $\FF$.  Then there are two  disjoint finite subsets
$I_1, I_2$ of the index set $I$ such that if \
$\wht G_1 := \prod_{\alpha \in I_1}^* G_\alpha$,  \   $\wht G_2 := \prod_{\alpha \in I_2}^* G_\alpha$,
 and      $\wht \FF := \wht G_1 * \wht G_2$,
then there exists a finitely generated factor-free subgroup $\wht H_1 $ of $\wht \FF$  with the following properties.

\begin{enumerate}
\item[(a)]  $\brr (\wht H_1  ) = \brr (H_1  ) $,
$\sigma_d(\wht H_1) \ge \sigma_d(H_1)$ for every $d \ge 3$,  and
$\sigma(\wht H_1) \ge \sigma(H_1)$. In particular, if the conjecture
\eqref{conjs} fails for $H_1$ then the conjecture \eqref{conjs} also fails for $\wht H_1$.
\smallskip

\item[(b)]  If the word problem for every group $G_\al$, where $\al \in I_1 \cup I_2$,
is solvable and  a finite irreducible graph  of $H_1$ is given, then the
LP-problem $\PP(\wht H_1, d)$  for  $\wht H_1$   of part (a) of Theorem~\ref{th1}  can be algorithmically written down and the WN${}_d$-coefficient $\sigma_d(\wht H_1) $ for $\wht H_1$  can be computed.
\smallskip

\item[(c)]  Let every group $G_\al$, where $\al \in I_1 \cup I_2$,
be finite, let $H_1$ be  given either by
a finite irreducible graph or by a finite generating set, and let
$$
d_{M} := \max \Big\{ |I_1 \cup I_2| , \max\{ |G_\al |  \mid   \al \in I_1 \cup I_2 \} \Big\} .
$$
Then $\sigma_{d_{M}}(\wht H_1) \ge \sigma(H_1)$
and there is an algorithm that decides whether the conjecture  \eqref{conjs} holds for $H_1$.
\end{enumerate}
\end{T3}

\begin{proof}[Proof of Theorem~1.3]  (a)  As in the proof of Theorem~\ref{th1}, we assume that the  subgroup $H_1$ is given by an irreducible $\A$-graph $\Psi(H_1)$  with $\core(\Psi(H_1)) = \Psi(H_1)$,  now the alphabet is $\A = \bigcup_{\al \in I} G_{\al}$.  Note that it is also possible to assume that $H_1$ is defined by a finite generating set $\Ss$ whose elements are words over the alphabet $\A$. In the latter case, we could apply Lemma~\ref{Lm1} which, when given
a finite generating set of a subgroup $H$ of $\FF$, verifies that $H$ is a factor-free subgroup of $\FF$ and, if so, constructs  an irreducible $\A$-graph of $H$.
\smallskip

 Making use of the graph $\Psi(H_1)$ of $H_1$, we switch from the original index set $I$ to its finite subset $I(H_1)$ and rename it by $\{ 1, \dots, m \}$.
 Here and below we use the notation introduced in Section~5. Without loss of generality, we may assume that $m \ge 3$, otherwise, we set $\wht H_1 := H_1$.
 \smallskip

 Consider the  embedding
 $$
 \mu_2 : \FF \to \FF_2(1)
 $$
 defined by means of the map \eqref{map2}, where
 $$
 \FF_2(1) = G_1 * G(2,m)  \quad  \text{ and}  \quad G(2,m) = G_2 * \dots  * G_m .
 $$
 Denote $\wht H_1 := \mu_2(H_1)$.  By Lemma~\ref{lemmap2}, $\mu_2$ is a monomorphism, hence,  $\brr(\wht H_1)=  \brr(H_1)$ and, by Lemma~\ref{lemmap2}(e),
 $$
 \sigma_d(H_1) \le \sigma_d(  \wht H_1 )
 $$
 for every $d \ge 3$.  Consequently,   $\sigma(H_1) \le \sigma(  \wht H_1 )$ as well. This proves part (a).
\medskip

 (b) Assume that the word problem is solvable in groups $G_\al$, $\al \in I(H_1)$.
 Then the word problem is also solvable in factors $G_1,  G(2,m)$ of the free product $\FF_2(1) = G_1 * G(2,m)$. Furthermore, using the graph $\Psi(H_1)$ of $H_1$ and the map \eqref{map2}, we can algorithmically construct a finite irreducible  graph $\Psi(\wht H_1)$  with $\core(\Psi(\wht H_1)) = \Psi(\wht H_1)$. By Theorem~\ref{th1}(d), the LP-problem $\PP( \wht H_1, d) =  \PP( \Psi(\wht H_1), d) $, associated with $\wht H_1$, can be effectively constructed and the coefficient  $\sigma_d(\wht H_1)$ can be computed, as claimed in part (b).
\medskip

(c) We will continue to use the notation introduced above.
Suppose that all factors $ G_\al$, where $\al \in I(H_1)= \{ 1, \dots, m \}$, are finite. We also assume that  $H_1$ is  given by an irreducible graph $\Psi(H_1)$  with $\core(\Psi(H_1)) = \Psi(H_1)$ or $H_1$   is  given   by  a finite generating set.  Note that Lemma~\ref{Lm1} reduces the latter case to the former one.
By Lemma~\ref{lemsup}, when computing the number
$$
\sigma( H_1) = \sup_{H_2}  \bigg\{ \frac {\brr(H_1, H_2)}{\brr(H_1) \brr(H_2)}  \bigg\}
$$
over all finitely generated factor-free subgroups $H_2$ with $\brr(H_2) >0$,
we may assume that the subgroup $H_2$ has property (B)   and satisfies the condition $I(H_2) \subseteq I(H_1)$.  The condition  $I(H_2) \subseteq I(H_1)$ implies that the degree of every primary vertex of $\Psi(H_2)$ does not exceed  $ | I(H_1)|$. On the other hand,  the degree of every secondary vertex of
$\Psi(H_2)$ does not exceed
$$
\max \{ | G_\al |    \mid     \al \in   I(H_1) \} .
$$
Hence, the degree $\deg v$ of every vertex $v$ of $\Psi(H_2)$ satisfies
\begin{equation}\label{eq11}
 \deg v \le {d_{M}}  := \max \{ | I(H_1)|, \, \max \{   | G_\al |   \mid   \al \in   I(H_1)  \} \, \} .
 \end{equation}
 Thus, by Lemma~\ref{lemsup}, we may conclude that
 $$
 \sigma( H_1) =  \sigma_{d_{M}}( H_1) = \sup_{H_2}  \bigg\{ \frac {\brr(H_1, H_2)}{\brr(H_1)  \brr(H_2)}  \bigg\} ,
 $$
where the supremum is taken over all subgroups $H_2$ with property (Bd) in which $d = d_{M}$.
Applying Lemma~\ref{lemmap2}(e) to $H_1$, we obtain
$$
\sigma(H_1) =  \sigma_{d_{M}}( H_1) \le \sigma_{d_{M}}(\wht H_1) .
$$
Recall that an irreducible graph $\Psi(\wht H_1)$ of  $\wht H_1 = \mu_2(H_1)$  can be algorithmically constructed from $\Psi(H_1)$ (for details see the proof of Lemma~\ref{lemmap2}(c)) and that the word  problem is solvable for factors of the free product
$$
\FF_2(1) = G_1 * G(2,m).
$$
Invoking Theorem~\ref{th1}(d), we see that the LP-problem
 $$ \PP(\wht H_1, {d_{M}}) = \PP(\Psi(\wht  H_1), {d_{M}})$$
 can be algorithmically written down and hence the coefficient $\sigma_{d_{M}}(\wht H_1)$ can be computed.   The proof of Theorem~\ref{th3} is complete.
\end{proof}

In conclusion, we mention that it is not clear whether there is a duality gap between
the LSIP-problem $ \sup \{ - x_s \mid \SLI[Y_1] \}$, introduced in Section~4, and its dual problem \eqref{dlp} and
it would be of interest to find this out. Another natural problem
is to find an algorithm that solves the dual  problem  \eqref{dlp} of the LSIP-problem $ \sup \{ - x_s \mid \SLI[Y_1] \}$
and thereby effectively  computes the WN-coefficient $\sigma(\Psi(H_1)) =  \sigma(H_1)$ for a finitely generated factor-free subgroup $ H_1$ of the free product of two groups (and, perhaps, more than two groups) which are not necessarily finite.  It would also be interesting to find an algorithm  that computes the Hanna Neumann coefficient $\bar \sigma(H_1)$ for a finitely  generated factor-free noncyclic subgroup $ H_1$ of the free product $\FF$ of two finite groups which is defined as
$$
\bar \sigma(H_1) := \sup_{H_2}  \bigg\{ \frac {\brr(H_1 \cap H_2)} {\brr(H_1) \brr(H_2)}  \bigg\}
$$
over all finitely  generated factor-free noncyclic subgroups $H_2$ of $\FF$.
\medskip

{\em Acknowledgments.}  The author is grateful to the referee for helpful remarks and suggestions.

\end{document}